\documentclass[a4paper, reqno, 12pt]{amsart}

\usepackage[utf8]{inputenc}
\usepackage{amsthm,amsfonts,amssymb,amsmath,amsxtra,amsrefs}
\usepackage[new]{old-arrows}
\usepackage{mathtools}
 \usepackage{bbold}
\usepackage{latexsym}
 \usepackage{stmaryrd}
\usepackage{graphicx}
\usepackage{tikz}
\usepackage{tikz-cd}
\usepackage{todonotes,cancel}
\usepackage[margin=1.25in]{geometry}
\usepackage{mathrsfs}
\usepackage{bm}

\usepackage[all]{xy}
\SelectTips{cm}{}
\usepackage{xr-hyper}
\usepackage[colorlinks=false,
   citecolor=Black,
   linkcolor=Red,
   urlcolor=Blue]{hyperref}
\usepackage{verbatim}
\RequirePackage{xspace}
\RequirePackage{etoolbox}
\RequirePackage{varwidth}
\RequirePackage{enumitem}
\RequirePackage{tensor}
\RequirePackage{mathtools}
\RequirePackage{longtable}
\RequirePackage{multirow}

\usepackage{rotating}

\tikzset{
    labl/.style={anchor=south, rotate=90, inner sep=.5mm}
}

\newtheorem{thm}{Theorem}
\newtheorem{theorem}[thm]{Theorem}
\newtheorem{thmintro}{Theorem}

\newtheorem{propintro}[thmintro]{Proposition}

\newtheorem{prop}[thm]{Proposition}
\newtheorem{proposition}[thm]{Proposition}

\newtheorem{lemma}[thm]{Lemma}

\newtheorem{cor}[thm]{Corollary}
\newtheorem{corollary}[thm]{Corollary}

\theoremstyle{definition}
\newtheorem{definition}[thm]{Definition}

\newtheorem{remark}[thm]{Remark}

\numberwithin{equation}{section}
\numberwithin{thm}{section}

\newcommand{\BA}{\ensuremath{\mathbb {A}}\xspace}

\newcommand{\BP}{\ensuremath{\mathbb {P}}\xspace}

\newcommand{\BZ}{\ensuremath{\mathbb {Z}}\xspace}

\newcommand{\CI}{\ensuremath{\mathcal {I}}\xspace}

\newcommand{\CL}{\ensuremath{\mathcal {L}}\xspace}

\newcommand{\CO}{\ensuremath{\mathcal {O}}\xspace}

\newcommand{\X}{\ensuremath{\mathbf {X}}\xspace}
\newcommand{\Y}{\ensuremath{\mathbf {Y}}\xspace}

\newcommand{\Ir}{\mathbf{I}_\circ'}
\newcommand{\I}{\mathbf{I}}

\newcommand{\wt}{\text{wt}}

\begin{document}

\title{Double Bruhat cells in symmetric spaces}

\author[Huanchen Bao]{Huanchen Bao}
\address{Department of Mathematics, National University of Singapore, Singapore.}
\email{huanchen@nus.edu.sg}

\author[Jinfeng Song]{Jinfeng Song}
\address{Department of Mathematics, The Hong Kong University of Science and Technology, Clear Water Bay, Hong Kong SAR.}
\email{jfsong@ust.hk}

\begin{abstract}
This paper studies the closures of the
double Bruhat cells of symmetric spacess in its equivariant embeddings. We call them large double Bruhat varieties. Our aim is to establish the double
Bruhat stratification as a natural setting for extending the geometry of
Richardson varieties to symmetric spaces.

For the wonderful compactification of the associated adjoint symmetric space, we show their scheme-theoretic intersections with the unique closed orbit are reduced unions of projected Richardson varieties. This connection with Richardson geometry on partial flag varieties supports large double Bruhat varieties as a natural class of subvarieties in symmetric-space embeddings.

In positive characteristic, we prove that every equivariant embedding admits a Frobenius splitting compatible with all large double Bruhat varieties and all boundary divisors. These varieties are strongly \(F\)-regular, and globally \(F\)-regular when the ambient embedding is projective. In characteristic zero, they are of strongly \(F\)-regular type, and of globally \(F\)-regular type in the projective case; in particular, they are normal, Cohen–Macaulay, and have rational singularities.
\end{abstract}

\maketitle


\setcounter{section}{-1}

\section{Introduction}\label{sec:introduction}

\subsection{Background and motivation}
Let $G$ be a connected reductive group over an algebraically closed field
$k$ of characteristic different from $2$, let $\theta$ be an involution,
and put $K=G^\theta$ and $S=G/K$. This paper studies the closures of the
double Bruhat cells of $S$ in its equivariant embeddings. Our aim is to establish the double
Bruhat stratification as a natural setting for extending the geometry of
Schubert and Richardson varieties to symmetric spaces.

Frobenius splittings of symmetric varieties are already known.
Brion--Inamdar proved that reductions of spherical varieties modulo
sufficiently large primes admit splittings compatible with all
$G$-orbit closures \cite{BI94}*{Theorem~1}.
For wonderful compactifications of symmetric spaces,
De Concini--Springer obtained such splittings in every odd characteristic
\cite{DCS99}*{Theorem~5.9}; Tange extended the construction to arbitrary
equivariant embeddings \cite{Tan18}*{Theorem~2.2 and Section~3}.
The compatibility assertions in these results concern $G$-orbit closures.
They do not provide compatibility with a finer stratification inside the
open symmetric space.

For group embeddings, substantially more is known. In the wonderful
compactification of a semisimple adjoint group, Brion--Polo constructed
a splitting compatible with all large Schubert varieties, the closures
of the double cosets $B\dot wB$ \cite{BP00}*{Theorem~2}.
Brion--Thomsen extended these splitting results to general group
embeddings and proved that projective large Schubert varieties are
globally $F$-regular \cite{BT06}*{Proposition~4.2 and Theorem~4.3}.
He--Thomsen proved that all $B\times B$-orbit closures are strongly
$F$-regular in arbitrary group embeddings and globally $F$-regular
in projective embeddings \cite{HT07}*{Theorem~8.1 and Corollary~8.4}.
These properties imply normality and Cohen--Macaulayness, and global
$F$-regularity also gives vanishing of higher cohomology for nef line
bundles \cite{Smi00}*{Proposition~4.1 and Corollary~4.3}.
They suggest seeking analogous results for subvarieties in embeddings
of symmetric spaces.

Earlier extensions in this direction concern $B$-orbit closures in
$G/K$, or equivalently $K$-orbit closures in $G/B$; see
\cite{Bri01,HT12} and the quantum symmetric pair constructions of
Bao--Song \cite{BS22}*{Theorems~6.4 and~6.8}.
Their geometry is more complicated: $B$-orbit closures can be nonnormal,
and there need not be a splitting compatible with all of them, since
their scheme-theoretic intersections can be nonreduced
\cite{Bri01}*{Introduction, p.~266}; see also
\cite{Tan18}*{Introduction}.
Thus the favourable properties of group orbit closures do not extend
uniformly to this stratification.

The double Bruhat stratification of $S = G/K$ has its origins in Evens--Lu's study
of Poisson geometry \cite{EL06}*{Section~4, especially Theorem~4.14}.
The first author's study of these strata in total positivity on symmetric spaces
\cite{Bao26}*{Section~1.3 and Definition~2.2} suggests that they provide
the appropriate stratification for 
symmetric spaces. 

Choose the opposite Borels $B^+,B^-$ and the
$\theta$-stable maximal torus as in Section~\ref{sec:preliminaries}.
The embedding $\iota$ and the double Bruhat strata are given by
\[
\iota:S\hookrightarrow G,\quad gK\longmapsto g\theta(g)^{-1},
\qquad
\mathring S_u=\iota^{-1}(B^+\dot uB^-\dot w_0\dot w_\bullet).
\]
Here $w_0$ is the longest Weyl group element and $w_\bullet$ is the
longest element for the black nodes of the Satake diagram.
The nonempty strata $\mathring S_u$ are indexed by
$u\leq w^\bullet=w_0w_\bullet$. 
For the symmetric space $(G\times G)/\operatorname{diag}G$, they recover
the usual double Bruhat cells of $G$. Set
$S_u$ to be the closure of $\mathring{S}_u$ in $S$.

\subsection{Main results} We now explain the main results of this paper.

\begin{thmintro}[Theorem~\ref{thm:main} and Proposition~\ref{prop:DBFregular}]\label{thm:intro-affine} Assume that $\operatorname{char}k=p>2$. 
There is a Frobenius splitting of $S$ compatible with every $S_u$.
For every $u\leq w^\bullet$, the coordinate ring $k[S_u]$ is strongly
$F$-regular.
\end{thmintro}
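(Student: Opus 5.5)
We plan to transport the known splitting results for $G$ to $S$ through the embedding $\iota$, working on the affine variety $S$ itself. The strata are $\mathring S_u=\iota^{-1}(B^+\dot uB^-\dot w_0\dot w_\bullet)$. The closure of $B^+\dot uB^-$ in $G$ is a (right-translated) large Schubert variety in $G$. By Brion--Polo and Brion--Thomsen, $G$, and in fact $G\times G$ or the wonderful compactification, carries a $B^+\times B^-$-canonical splitting compatible with all such closures. Since $\iota$ is not surjective, we cannot simply pull back. Instead we use $\sigma$-stable ideals.

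The key steps, in order, are as follows.

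\textbf{(1) A canonical splitting on $S$.} We write down an explicit splitting of $S$ given by a $(p-1)$-st power of a section $f$ of $\omega_S^{-1}$. For $f$ we take a product of pulled-back generalized minors $\iota^*\Delta_{\varpi_i,\,w^\bullet\varpi_i}$, suitably twisted by $\theta$, possibly times their $\theta$-translates. This is the analogue of the Kumar--Mehta/Brion--Polo section $\prod_i \Delta_{\varpi_i}\,\Delta_{w_0\varpi_i}$. To check that $f^{p-1}$ splits, it suffices to compute the leading term at a smooth point in suitable local coordinates. The natural chart is the big cell $\iota^{-1}(B^+ B^-\dot w_0\dot w_\bullet)$, which by the structure theory of Section~\ref{sec:preliminaries} should be isomorphic to a product of an affine space and a torus.

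\textbf{(2) Compatibility with the boundary of the big cell.} The zero divisor of $f$ is then a union of codimension-one strata $S_{u}$ with $\ell(u)=\ell(w^\bullet)-1$, plus their opposite analogues. A splitting $\sigma$ compatibly splits the components of $\operatorname{div}(f)$. Intersections and their irreducible components are then compatibly split.

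\textbf{(3) Reaching every $S_u$.} We need each $S_u$ to be an irreducible component of an intersection of the divisorial strata. We argue by descending induction on $\ell(u)$, using the stratification property $\overline{\mathring S_u}=\bigsqcup_{u'\le u}\mathring S_{u'}$ and the fact that each $\mathring S_u$ is smooth of the expected dimension, which follows from Evens--Lu. A codimension-one stratum of $S_u$ is a component of $S_u\cap S_{u'}$ for a suitable divisor $S_{u'}$. The alternative route to the same conclusion is Brion--Thomsen's criterion: $B$-stable compatibly split subvarieties are closed under translation by minimal parabolics. For this we would use the action of $P_i$ on the $\iota$-side to produce $S_{us_i}$ from $S_u$.

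\textbf{(4) Strong $F$-regularity.} We use the standard criterion: a splitting given by $f^{p-1}$ in which $S_u$ is compatibly split, with $f|_{S_u}$ nonzero off a divisor $D$ and $S_u\setminus D$ strongly $F$-regular, gives strong $F$-regularity of $S_u$, after verifying that $f$ restricted to $S_u$ still defines a splitting along $D$ (Smith/Brion--Thomsen). Here $S_u\setminus D$ is the open cell $\mathring S_u$ intersected with the big cell. This is smooth, so the criterion applies and $k[S_u]$ is strongly $F$-regular.

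We expect the main obstacle to be step (1), identifying $f$ and verifying its leading term. The difficulty is that $\iota$ involves $\theta$, so the minors pulled back are not simply coordinates. The black-node factor $w_\bullet$ and the restricted root system must enter the degree bookkeeping exactly so that $f$ has weight matching $\omega_S^{-1}$. We would first try the quasi-split case, where $w_\bullet=1$, and then reduce the general case via the Levi of black nodes.
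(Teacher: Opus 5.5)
Your factors $\iota^*\Delta_{\omega_i,w^\bullet\omega_i}$ are exactly the paper's $\varphi_i$. However, Step~(1) cannot be completed as you describe, and this is the main gap. On the affine variety $S$, the map $\phi$ attached to $f^{p-1}$ sends $1$ to a regular function $\phi(1)\in k[S]$, and $\phi$ is a splitting exactly when $\phi(1)=1$. A leading-term computation at one point only gives the value of $\phi(1)$ at that point. That is enough on a proper variety, where $\phi(1)$ is automatically constant. But $k[S]$ has many nonconstant $T$-invariant functions ($f$ itself is one), so on $S$ you would have to control every coefficient of $t^{(p-1)\mathbf 1+pm}$ with $m\ne0$. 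Also, $\mathring S_e$ is in general not an affine space times a torus; the chart of that shape is $B^-K/K\cong U_P^-\times\breve T$.

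The missing idea is to verify the splitting on a projective embedding. The paper works on the wonderful compactification $\X$ of $S_{\mathrm{ad}}$.
\begin{enumerate}
\item Multiplying by the boundary sections $f_i$, $i\in\Ir$, makes $\sigma=\prod_{i\in\Ir}f_i\prod_{i\in\I}\varphi_i$ anticanonical (Proposition~\ref{prop:wc}).
\item Proposition~\ref{prop:dv} shows that on the closed orbit $Y=G/P$ the divisor of $\prod_i\overline{\varphi_i}$ is exactly the reduced sum of all projected Richardson divisors. Hence its $(p-1)$-st power is, up to a scalar, the standard splitting of $G/P$.
\item Adjunction along the normal-crossing boundary then shows that $\sigma^{p-1}$ splits $\X$.
\item This is transported to smooth complete toroidal embeddings via $\pi^*$ and the log-canonical identity of Lemma~\ref{lem:embeddings-toroidal-map}, and then restricted to $S$.
\end{enumerate}
No $\theta$-translates or ``opposite'' factors are needed. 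On $S$ one has $Z(\prod_i\varphi_i)=\bigcup_iS_{s_i}$, and the second family of divisors appears only after restriction to $Y$.

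Step~(4) also fails as written. For $u\ne e$ you have $S_u\subseteq S_{s_i}\subseteq Z(f)$ whenever $s_i\le u$, so $f|_{S_u}\equiv0$. Moreover $\mathring S_u$ is disjoint from your big cell $\mathring S_e$. So there is no divisor $D$ of the kind you describe. The criterion you invoke needs, for each $u$, a (stable) splitting of $S_u$ along an effective Cartier divisor \emph{not} containing $S_u$, with strongly $F$-regular complement. Nothing in the proposal produces such a $u$-dependent splitting.

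The paper does not derive strong $F$-regularity from the splitting of $S$ at all. It uses the local product charts \eqref{eq:atlas2}, namely $S_w\dot w_\bullet\cap\dot vU^-B^+\cong(\mathcal R_{v,ww_0}\cap\mathring{\mathcal B}^{v})\times\mathring S_{vw_0}\dot w_\bullet$. These reduce the claim to smoothness of $\mathring S_{vw_0}$ and global $F$-regularity of Richardson varieties (Proposition~\ref{prop:RichardsonFregular}).

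Step~(3) also has problems.
\begin{enumerate}
\item The Bruhat order is reversed. By Lemma~\ref{lem:prelim-strata-intersection}, $S_u=\bigsqcup_{v\ge u}\mathring S_v$ has codimension $\ell(u)$, so the divisorial strata are the $S_{s_i}$.
\item A codimension-one $S_v\subset S_u$ need not be a component of $S_u\cap S_{s_i}$ for any $i$. For example, take $u=s_1s_2$ and $v=s_1s_2s_1$ for a split involution of type $A_2$; then $S_u\cap S_{s_1}=S_u\cap S_{s_2}=S_u$.
\item The minimal-parabolic argument is unavailable, because the $S_u$ are not $B^+$-stable, only $T$-stable.
\end{enumerate}
What does work is this: if $\ell(u)\ge2$, then $u$ covers two distinct elements $u_1,u_2$, and $S_u$ is an irreducible component of $S_{u_1}\cap S_{u_2}$.
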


For an equivariant embedding $X$ of $S$, let $X_u$ denote the closure
of $\mathring S_u$ in $X$, and let $\partial X=X\setminus S$ be the boundary.
We call $X_u$ a \emph{large double Bruhat variety}.
By an equivariant embedding we always mean a normal $G$-variety which contains $S$ as an open orbit.

\begin{thmintro}[Theorem~\ref{thm:main}, Corollary~\ref{cor:strong-f-regular-strata}, and Theorem~\ref{thm:global-f-regular-strata}]\label{thm:intro-embeddings} Assume that $\operatorname{char}k=p>2$. 
Let $X$ be any equivariant embedding of $S$.
\begin{enumerate}[label=\textup{(\arabic*)},leftmargin=*]
\item $X$ admits a Frobenius splitting compatible with every boundary
      divisor and every $X_u$.
\item Every $X_u$ is strongly $F$-regular.
\item If $X$ is projective, every $X_u$ is globally $F$-regular.
\end{enumerate}
\end{thmintro}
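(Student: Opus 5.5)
The plan is to reduce everything to the known splitting machinery for group embeddings and for $G\times G$-varieties via the embedding $\iota$, and then to the Brion--Thomsen / He--Thomsen criteria for $F$-regularity.

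\textbf{Step 1: splitting of $S$ and of $X$.} On $G$ (or on $G/B^+\times G/B^-$-type models) there is the standard canonical splitting given by the $(p-1)$-th power of a section $\sigma$ of $\omega^{-1}$ whose divisor is $\sum_i (\overline{B^+\dot s_iw_0B^-}+\overline{B^-\dot s_i w_0 B^+})$ type boundary divisors; this splitting compatibly splits all double Bruhat closures $\overline{B^+\dot uB^-\dot v}$. I will take the analogous section on $S$: pull back, via $\iota$ (a closed immersion onto a component of $\{g:\theta(g)=g^{-1}\}$, translated by $\dot w_0\dot w_\bullet$), the product of the functions $f_i=\Delta_{\omega_i}$ (generalized minors) cutting out the codimension-one strata $S_{u}$, $\ell(u)=\ell(w^\bullet)-1$, together with $\theta$-twisted partners. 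Using De Concini--Springer / Tange, I identify $\omega_X^{-1}$ on any embedding $X$ with $\mathcal O(\sum \text{boundary} + 2\sum D_\alpha)$-type, so that $\sigma^{p-1}$ with $\sigma=\prod_i f_i\cdot \prod \partial$-equations defines a section of $\omega_X^{1-p}$ whose zero divisor contains all boundary divisors and all codimension-one $X_u$. The key point is to check $\sigma^{p-1}$ actually gives a splitting: restrict to the open cell (a $B^+$-stable affine chart $\cong U\times$ torus-part) and verify that the coefficient of $(x_1\cdots x_n)^{p-1}$ is $1$, via a toric/big-cell computation, in the style of Brion--Polo; here $p>2$ enters. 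Then compatibility with every $X_u$ follows by the standard induction: components of intersections of compatibly split subvarieties are compatibly split, and each $X_u$ is reached by intersecting $X_{u'}$ with $\ell(u')=\ell(u)+1$ and taking components, using that $\mathring S_u$ strata behave like Bruhat intervals (closure order = Bruhat order on $u\le w^\bullet$, established earlier).

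\textbf{Step 2: strong $F$-regularity.} Use the criterion: if $X$ is split by $\sigma^{p-1}$ with $\sigma$ vanishing on an effective divisor $D$ whose complement in $X_u$ is strongly $F$-regular (smooth affine), and the splitting is along $D$ (i.e.\ $D$-splitting compatible with $X_u$), then $X_u$ is strongly $F$-regular (Smith / Brion--Thomsen Lemma). The open piece $X_u\setminus D$ is $\mathring S_u\cap(\text{big cell})$, which by the Evens--Lu/Bao structure is smooth (product of a torus and affine space). For projectivity, add an ample divisor in the support of $D$ (available since boundary plus $B$-stable divisors generate an ample class in a projective embedding) to get global $F$-regularity.

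The main obstacle is Step 1's verification that the chosen section is a splitting on arbitrary (non-toroidal) embeddings; I would first prove it on toroidal embeddings, where local structure theorem gives $X^\circ\cong P^u\times Z$ with $Z$ toric, then descend to general $X$ via a proper birational toroidal resolution $\pi:\tilde X\to X$ with $\pi_*\mathcal O=\mathcal O$ (normality), pushing forward splittings and compatibly split subvarieties.
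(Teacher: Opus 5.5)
Your architecture for part (1) matches the paper: the section $s_{\partial X}\prod_i\pi^*\varphi_i$ with $\varphi_i=\iota^*\Delta_{\omega_i,w^\bullet\omega_i}$, and the reduction to a smooth complete toroidal embedding through an equivariant resolution with $q_*\CO=\CO$. However, the decisive input of Step~1 is only gestured at.

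First, you need each $\varphi_i$ to extend to a section of $\CL_{-\overline{\omega_i}}$ on $\X$ whose divisor has no boundary component. Only then is $\sigma_X$ a section of $\omega_X^{-1}\cong\CO(\partial X)\otimes\pi^*\CL_{-\overline\rho}$ with divisor $\partial X+\sum_iX_{s_i}$, each $X_{s_i}$ appearing once; there is no ``$2\sum D_\alpha$''. The paper gets this from Lemma~\ref{le:flo}, Theorem~\ref{thm:linebundles} and $\overline{\varphi_i}\neq0$.

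Second, take a torus-fixed point of a closed orbit, where the boundary equations are among the local coordinates. There the coefficient of $(x_1\cdots x_n)^{p-1}$ in $\sigma_X^{p-1}$ reduces to the corresponding coefficient of $(\prod_i\overline{\varphi_i})^{p-1}$ on $Y=G/P$. So your ``toric/big-cell computation'' is exactly the claim that $\prod_i\overline{\varphi_i}$ defines the standard splitting of $G/P$. This is the paper's Proposition~\ref{prop:dv}: each $\operatorname{div}(\overline{\varphi_i})$ consists of projected Richardson divisors, and together they form the full Knutson--Lam--Speyer anticanonical divisor. Your plan has no substitute for this.

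There is also an indexing error. By Lemma~\ref{lem:prelim-strata-intersection}, $\operatorname{codim}\mathring S_u=\ell(u)$, so the divisors are the $X_{s_i}$, and $X_u$ arises as a component of $X_{v_1}\cap X_{v_2}$ for two distinct $v_1,v_2$ covered by $u$.

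Step~2 fails as stated. The divisor along which you invoke the splitting must not contain $X_u$, but $X_u\subseteq X_{s_i}$ whenever $s_i\le u$. The usable part of $Z(\sigma_X)$ is therefore $\partial X+\sum_{s_i\not\le u}X_{s_i}$. Its complement in $X_u$ still contains every stratum $\mathring S_v$ with $v\ge u$ and $\operatorname{supp}(v)=\operatorname{supp}(u)$, and it is all of $S_u$ once $\operatorname{supp}(u)=\operatorname{supp}(w^\bullet)$.

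It is not ``$\mathring S_u\cap$ big cell''. The splitting is not relative to $B^+$-stable divisors, and removing them would not help anyway, since every $\mathring S_v$ meets $B^+K/K$ by Proposition~\ref{prop:chart-richardson}. By \eqref{eq:atlas2}, this complement is locally an open piece of a Richardson variety times a smooth factor, so it is not smooth in general.

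The real local input is therefore strong $F$-regularity of $k[S_u]$. The paper proves it in Proposition~\ref{prop:DBFregular}, combining the atlas \eqref{eq:atlas2} with global $F$-regularity of Richardson varieties (Proposition~\ref{prop:RichardsonFregular}).

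With that input, part (3) follows from the paper's Proposition~\ref{prop:AmpleDiv}: an ample divisor with all coefficients positive, supported exactly on $\partial X$. Your ``boundary plus $B$-stable divisors'' is the wrong set, because the ample divisor must be built from components of the splitting divisor that do not contain $X_u$. One then upgrades the $(p-1)\partial X$-splitting to a stable splitting along this divisor, applies Theorem~\ref{thm:gloFre}, and descends from a toroidal resolution via Brion--Thomsen.

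Part (2) for arbitrary, possibly non-projective and non-toroidal, $X$ needs Sumihiro's theorem plus locality. Your proposal does not address this case.
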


In characteristic zero, we obtain the following consequences.

\begin{thmintro}[Theorem~\ref{thm:characteristic-zero-f-regular} and Corollary~\ref{cor:characteristic-zero-singularities}]\label{thm:intro-zero}
Assume that $\operatorname{char}k=0$ and let $X$ be an equivariant
embedding of $S$. Every $X_u$ is of strongly $F$-regular type; in
particular, it is normal, Cohen--Macaulay, and has rational singularities.
If $X$ is projective, every $X_u$ is of globally $F$-regular type.
\end{thmintro}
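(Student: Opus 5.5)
The plan is to deduce the characteristic-zero statement from the positive-characteristic results (Theorem~\ref{thm:intro-embeddings}) by spreading out. We use the standard definition: a variety over a field of characteristic zero is of strongly (resp.\ globally) $F$-regular type if, for some finitely generated $\mathbb{Z}$-subalgebra $A\subset k$ and a model over $A$, the fibres over a dense open set of closed points of $\operatorname{Spec}A$ are strongly (resp.\ globally) $F$-regular.

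\emph{Step 1: a model of all the data.} Choose a finitely generated $\mathbb{Z}[1/2]$-subalgebra $A\subset k$ over which everything is defined: the split reductive group scheme $G_A$; the involution $\theta_A$, which we may take compatible with a pinning, so that $\theta$ is determined by a diagram automorphism and a torus element defined over a small ring; $K_A=G_A^{\theta_A}$; the embedding $\iota$; the $B^\pm$ and Weyl representatives $\dot u,\dot w_0,\dot w_\bullet$; and finally a model $X_A$ of $X$ with its $G_A$-action and the open immersion $S_A\hookrightarrow X_A$. Since $X$ is of finite type this is routine. We also set $(X_u)_A$ to be the scheme-theoretic closure of $\iota_A^{-1}(B^+\dot uB^-\dot w_0\dot w_\bullet)$ in $X_A$.

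\emph{Step 2: the geometric fibres are the objects of Theorem~\ref{thm:intro-embeddings}.} After shrinking $\operatorname{Spec}A$ we need the following facts for every closed point $\mathfrak m$ with residue characteristic $p$:
\begin{itemize}
\item the geometric fibre of $X_A$ is a normal equivariant embedding of the symmetric space $G_{\bar{\mathbb F}_p}/K_{\bar{\mathbb F}_p}$ of the \emph{same} type, i.e.\ the same Satake diagram;
\item the fibre of $(X_u)_A$ is exactly the large double Bruhat variety there.
\end{itemize}
Generic flatness and generic normality of fibres handle most of this. The delicate points are three. First, $K_A$ must have smooth fibres with the right identity component, which holds since $p>2$ and $\theta$ is a tame involution. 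Second, the fibre of $S_A$ must be the orbit $G/K$. Third, taking the closure must commute with base change. For the third point we use that $\mathring S_u$ is a locally closed subscheme, flat over $A$ after shrinking, and we apply generic flatness to the closure $(X_u)_A$; then for generic $\mathfrak m$ the fibre of the closure equals the closure of the fibre. This equality needs the fibre to be reduced and irreducible of the right dimension, which is the main obstacle. I expect to handle it via irreducibility of $\mathring S_u$, which follows from its known description as a product of an affine space and a torus, uniform in $p$. Alternatively one can cover the finitely many $u\le w^\bullet$ separately and use that a flat family with generically integral fibres is integral fibrewise on a dense open set.

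\emph{Step 3: conclusion and consequences.} Theorem~\ref{thm:intro-embeddings}(2) then gives strong $F$-regularity of the fibres $(X_u)_{\bar{\mathbb F}_p}$. It descends to the fibre over $A/\mathfrak m$, since $F$-regularity is compatible with faithfully flat separable base field extension. Hence $X_u$ is of strongly $F$-regular type. If $X$ is projective, choose $X_A$ projective over $A$, and Theorem~\ref{thm:intro-embeddings}(3) gives globally $F$-regular type in the same way. For the singularity consequences:
\begin{itemize}
\item normality follows since strongly $F$-regular fibres are normal and normality spreads back to the generic fibre;
\item strongly $F$-regular type implies $F$-rational type, which by Smith and Hara is equivalent to rational singularities in characteristic zero;
\item rational singularities imply Cohen--Macaulay.
\end{itemize}
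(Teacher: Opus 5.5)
Your proposal is correct and follows essentially the paper's route: reduce modulo $p$ to Theorem~\ref{thm:intro-embeddings}, then pass from strongly $F$-regular (hence $F$-rational) type to rational singularities via \cite{Smi97}. The paper takes its model of $S$ from the uniform $\mathbb{Z}[1/2]$-form of \cite{BS24} and defers to the reduction argument of \cite{HT07}, whereas you spread out generically and shrink $\operatorname{Spec}A$.

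One small correction: $\mathring S_u$ is not in general a product of an affine space and a torus. By Proposition~\ref{prop:chart-richardson}, its intersection with the open $B^-$-orbit is an open Richardson variety times $\breve T_{\mathrm{sc}}$, and open Richardson varieties are usually not affine spaces. This does not hurt you: irreducibility in every characteristic $\neq 2$ is Lemma~\ref{lem:prelim-strata-intersection}(3), and your alternative via generic geometric integrality of the flat closure $(X_u)_A$ avoids the issue entirely.
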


For group embeddings, some special cases of these subvarieties $X_u$ are often referred to as ``large Schubert
varieties''; see \cite{BP00} and \cite{BT06}. We use the term ``large double Bruhat varieties” to reflect their definition as closures of double Bruhat cells. Proposition \ref{Prop:D} connects these varieties with projected Richardson geometry on the closed orbit, instead of the Schubert geometry.

\begin{propintro}[Corollary~\ref{cor:si}]\label{prop:intro-closed-orbit}\label{Prop:D}
Assume that $\operatorname{char}k\neq2$. Let $\X$ be the wonderful
compactification of the associated adjoint symmetric space
$S_{\mathrm{ad}}$, with closed orbit $Y\cong G/P_{\I_\bullet}$.
For every $u\leq w^\bullet$, the scheme-theoretic intersection
$\X_u\cap Y$ is a reduced union of projected Richardson varieties.
Moreover, $\bigcup_{i\in\I}(\X_{s_i}\cap Y)$ is the union of all
projected Richardson divisors of $Y$.
\end{propintro}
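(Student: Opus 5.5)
We prove Proposition~\ref{prop:intro-closed-orbit} in three steps. First we describe $Y$ set-theoretically, then we get reducedness from the compatible splitting, and finally we identify the components.

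\textbf{Step 1: the closed orbit and a local chart.} Use the standard big cell of the wonderful compactification $\X$ of $S_{\mathrm{ad}}$ (De Concini--Springer). It is an open affine chart $\X_0 \cong U^-_{P}\times \BA^{r}$, where $r=|\I_\circ'|$ is the rank, the coordinates $t_1,\dots,t_r$ cut out the boundary divisors $D_1,\dots,D_r$, and
\[
Y\cap \X_0=\{t_1=\dots=t_r=0\}\cong U^-_P .
\]
Compose $\iota$ with the projection to $\X$. The stratum $\mathring S_u=\iota^{-1}(B^+\dot uB^-\dot w_0\dot w_\bullet)$ is then cut out, on the open part $t_i\neq 0$, by the nonvanishing and vanishing of generalized minors $\Delta_{\lambda}(\,\cdot\,)$. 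These are matrix coefficients $\langle v^*_{u\lambda}, g\theta(g)^{-1} v_{\lambda'}\rangle$ for $\theta$-twisted dominant weights, which extend to sections of line bundles $\CL_\lambda$ on $\X$.

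We restrict these sections to $Y\cong G/P_{\I_\bullet}$. There $\CL_\lambda|_Y$ is the usual line bundle, and the extended sections become products of an extremal-weight section of $B^+$-type and a lowest-weight section of $B^-$-type. The expected conclusion is that the closure of $\mathring S_u$ meets $Y$ in the union of the projected Richardson varieties $\pi(X_{v}^{w})$, where $\pi\colon G/B\to G/P_{\I_\bullet}$ and the pairs $(v,w)$ range over the set dictated by $u$, roughly $v\le w$ with $w\,(v)^{-1}$ related to $u$ through the Demazure product with $w_\bullet$.

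\textbf{Step 2: reducedness.} By Theorem~\ref{thm:main} (valid for $p>2$), $\X$ admits a Frobenius splitting compatible with $\X_u$ and with all boundary divisors, hence with $Y=\bigcap_i D_i$. A scheme-theoretic intersection of compatibly split subschemes is again compatibly split, and therefore reduced. So $\X_u\cap Y$ is reduced, and each of its irreducible components is compatibly split.

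For characteristic zero, or for a general odd $p$ where one prefers not to use Theorem~\ref{thm:main} directly, we spread out over $\BZ[1/2]$. Reducedness at infinitely many primes then gives reducedness of the generic fiber.

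\textbf{Step 3: identifying the components.} On $G/P_{\I_\bullet}$, the subvarieties compatibly split by the standard splitting (Knutson--Lam--Speyer) are exactly the projected Richardson varieties. The splitting from Theorem~\ref{thm:main} restricts on $Y$ to that standard splitting, since it is given by the same $(p-1)$-th power of the product of extremal-weight sections. It follows that each component of $\X_u\cap Y$ is a projected Richardson variety.

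For the divisor statement, $\X_{s_i}$ is a divisor: it is the zero locus of the single minor attached to $\varpi_i$. Its intersection with $Y$ is the zero locus of the restricted section, namely the product of the $B^+$- and $B^-$-stable sections. This zero locus is $\pi(X_{s_i'}) \cup \pi(X^{w_0 s_i'})$, the union of a Schubert divisor and an opposite Schubert divisor for the appropriate white node. Taking the union over all $i\in\I$ (black nodes contribute nothing new) gives all Schubert and opposite Schubert divisors of $Y$. These are precisely the projected Richardson divisors.

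\textbf{Main obstacle.} The delicate point is Step 3: computing the restriction of the extended generalized minors to $Y$. This needs to be done carefully in the big cell, tracking how $\theta$ acts on highest-weight vectors and how the black-node longest element $w_\bullet$ enters. It also requires checking that the splitting section restricts to the standard Richardson splitting of $Y$. Our first approach will be an explicit limit computation: take $t\to 0$ along the one-parameter subgroup attached to an antidominant $\theta$-split cocharacter, and read off the leading terms of each minor.
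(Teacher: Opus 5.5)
Your Step~2 (compatible splitting of $\X$ with $\X_u$ and all $D_i$, so that $\X_u\cap Y$ is compatibly split and reduced) is the paper's route. So is the final appeal to the Knutson--Lam--Speyer classification of subvarieties compatibly split by the standard splitting of $G/P$. Your spreading-out remark for characteristic zero is the expected argument; the paper does not spell it out either.

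\textbf{The gap is in Step~3.} It rests on a false description of the restricted sections $\overline{\varphi_i}=\varphi_i|_Y$. When some black node is adjacent to a white one, the projected Richardson divisors of $Y=G/P_{\I_\bullet}$ are \emph{not} only the Schubert and opposite Schubert divisors, and black nodes do contribute. For $i\in\I_\bullet$, the zero divisor of $\overline{\varphi_i}$ is $\pi_P(\mathcal R_{s_i,w^\bullet})$ (Proposition~\ref{prop:dv}(3)), which is neither $B^+$- nor $B^-$-stable.

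A class count forces this. The Schubert and opposite Schubert divisors together have class $2\sum_{j\in\I_\circ}\omega_j$, whereas $\omega_Y^{-1}$ has class $\overline\rho=2\rho-2\rho_{\I_\bullet}$. For example, take $SL_4/Sp_4$, with $\I_\bullet=\{1,3\}$ and $Y=\mathrm{Gr}(2,4)$. Then $\overline{\omega_1}=\overline{\omega_3}=\omega_2$ and $\overline{\omega_2}=2\omega_2$. The white node yields one Schubert and one opposite Schubert divisor. The two black nodes yield the other two of the four positroid (cyclic-interval Pl\"ucker) divisors. Your argument would therefore produce only half of the projected Richardson divisors here.

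For the same reason, your justification that the splitting restricts to the standard one fails. You argue it is ``the same $(p-1)$-th power of the product of extremal-weight sections''. But the standard splitting of $G/P$ is the $(p-1)$-th power of a section vanishing on \emph{all} projected Richardson divisors. In the example above, that section is not a product of a $B^+$- and a $B^-$-eigensection.

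\textbf{What is missing is the paper's two-sided computation of $\operatorname{div}(\overline{\varphi_i})$.}
\begin{enumerate}
\item Your proposed limit computation gives only a lower bound. It must be carried out in both charts $\X^-$ and $\X^+=\dot w^\bullet\X^-$, because the opposite Schubert divisors are disjoint from $Y\cap\X^-=B^-P/P$. This yields \eqref{eq:pip}, and hence $\operatorname{div}(\overline{\varphi_i})\ge A_i$ as in \eqref{eq:phi-divisor-bound}.
\item Equality, including reducedness and the absence of further components, comes from a degree comparison. Both $\prod_i\overline{\varphi_i}\neq0$ and $\Delta=\sum_iA_i$ are anticanonical on the projective integral variety $Y$; the latter is Knutson--Lam--Speyer's Lemma~5.4. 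So their effective difference vanishes.
\item Only then does the Knutson--Lam--Speyer splitting section, which vanishes to order $p-1$ along $\Delta$, agree up to a scalar with $(\prod_i\overline{\varphi_i})^{p-1}$.
\end{enumerate}

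This identification cannot be bypassed by quoting Theorem~\ref{thm:main} as a black box. In the paper it is also what shows that $\sigma^{p-1}$ splits $\X$ at all (Theorem~\ref{thm:WCsplit}). With it, your Step~3 goes through, and the divisor statement reads $\X_{s_{\kappa(i)}}\cap Y=Z(\overline{\varphi_i})=A_i$ with $\bigcup_iA_i=\Delta$.
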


Here $\I_\bullet \subset \I$ and the parabolic subgroup $P_{I_\bullet}$ are determined by the corresponding Satake diagram. 

\subsection{Organization of the paper}
Section~\ref{sec:preliminaries} recalls preliminaries.
Section~\ref{sec:enveloping-variety} treats enveloping varieties and line
bundles on the wonderful compactification.
Section~\ref{sec:wonderful-splitting} constructs its Frobenius splitting.
Section~\ref{sec:equivariant-embeddings} establishes the results for general
equivariant embeddings, including the characteristic-zero consequences.

\subsection*{Acknowledgements} The authors used ChatGPT to assist with proof exploration and the typesetting of this paper. The authors take full responsibility for the content of the paper and the correctness of its results. HB is supported by the MOE grant A-8003582-00-00. JS is supported by the Glorious Sun Charity Fund.

\section{Preliminaries}\label{sec:preliminaries}

\subsection{Reductive groups and symmetric pairs}\label{subsec:prelim-symmetric-pairs}

Throughout the paper, unless otherwise specified, $k$ is an algebraically closed field. Let $G$ be a connected reductive group over $k$, and choose a maximal torus $T\subset G$ contained in a Borel subgroup $B^+$. Let $B^-$ be the opposite Borel subgroup with respect to $T$. Let $U^\pm$
be the unipotent radicals of $B^\pm$, respectively. We denote by $X=\operatorname{Hom}(T,k^\times)$ and $Y=\operatorname{Hom}(k^\times,T)$ the character and cocharacter lattices, with perfect pairing $\langle\ ,\ \rangle:Y\times X\to \BZ$. Let $\{\alpha_i\}_{i\in \I}$ and $\{\alpha_i^\vee\}_{i\in \I}$ be the corresponding simple roots and simple coroots. Let $W=\langle s_i\mid i\in \I\rangle$ be the Weyl group, with longest element $w_0$. We write
\[
X^+=\{\lambda\in X\mid \langle \alpha_i^\vee,\lambda\rangle\geq 0\text{ for all }i\in \I\}
\]
for the set of dominant weights, and we use the usual dominance order: $\lambda\leq \mu$ if $\mu-\lambda$ is a non-negative integral linear combination of simple roots. 

Let $\theta$ be an algebraic group involution of $G$, and let $K=G^\theta$ be its fixed point subgroup. We call $K$ a \emph{symmetric subgroup} and $G/K$ a \emph{symmetric space}. A torus $A\subset G$ is called \emph{$\theta$-split} if $\theta(a)=a^{-1}$ for all $a\in A$. We choose $T$ to be $\theta$-stable and to contain a maximal $\theta$-split torus. We also choose $B^+$ containing $T$ such that $\theta(B^+)\cap B^+$ has minimal dimension. With this choice, the involution $\theta$ induces involutions $\theta_X$ and $\theta_Y$ on $X$ and $Y$. There is a subset $\I_\bullet\subset \I$ and an involution $\tau:\I\to \I$ preserving $\I_\bullet$ such that, if $w_\bullet$ denotes the longest element of the parabolic subgroup $W_{\I_\bullet}$, then
\[
w_\bullet(\alpha_i)=-\alpha_{\tau i}\quad (i\in \I_\bullet),
\qquad
\theta_X(\alpha_i)=-w_\bullet\alpha_{\tau i}\quad (i\in \I).
\]
We write $\I_\circ=\I\setminus \I_\bullet$. The datum $(\I=\I_\bullet\sqcup\I_\circ,\tau,Y,X,\langle\ ,\ \rangle,\theta_X,\theta_Y)$ is the \emph{$\imath$-root datum} associated with $(G,\theta)$. This is the convention used in Bao--Song \cite{BS25b}*{Section 2.1} and Song \cite{Son25}*{Section 3.3}.

Put $w^\bullet=w_0w_\bullet$, $P = P^+_{\I_\bullet} \supset B^+$, $U_{P}^\pm=R_u(P^\pm_{\I_\bullet})$. Here $P^\pm_{\I_\bullet}$ are the standard  parabolic subgroups associated with $\I_\bullet$. Write $i\mapsto i^*$ for
the involution of $\I$, such that $w_0s_iw_0=s_{i^*}$, and set $a^*=w_0aw_0$ for
$a\in W$. The Satake conventions give
\[
\begin{gathered}
w_0w_\bullet=w_\bullet w_0,\qquad (w^\bullet)^{-1}=w^\bullet,\qquad\theta(w^\bullet)=w^\bullet,\\
\theta(B^\pm)=\dot w^\bullet B^\pm(\dot w^\bullet)^{-1},\qquad
\theta(U_P^\pm)=U_P^\mp;
\end{gathered}
\]
see \cite{Bao26}*{Section~2.1}. Here representatives in $N_G(T)$ are fixed;
equalities involving their products are understood modulo $T$.

The \emph{spherical weight lattice} is
\[
\breve X=\{\lambda-\theta_X(\lambda)\mid \lambda\in X\}\subset X,
\]
and we write $\overline{\lambda}=\lambda-\theta_X(\lambda)$ for $\lambda\in X$. We also set $\breve X^+=\breve X\cap X^+$. This notation follows the convention for symmetric spaces and their embeddings \cite{BS25b}*{Section 2.1}.

Throughout this paper, we assume that $G/K$ is \emph{nondegenerate}:
the induced involution $\theta_{\mathrm{ad}}$ on $G/Z(G)$ is nontrivial,
or equivalently $\I_\bullet\ne\I$. This excludes the case where
$G/K\cong T/T^\theta$ is a torus, including a point. In the excluded case,
every equivariant embedding is toric, the only nonempty double Bruhat
stratum is $G/K$ itself, and the adjoint wonderful compactification is
a point. The results of this paper then reduce to standard results for
toric varieties or to statements about a point. For Frobenius splitting,
see \cite{BK05}*{Exercise~1.3.E.6 and Theorem~1.4.10}; for global
$F$-regularity and its characteristic-zero analogue, see
\cite{Smi00}*{Proposition~6.4}, with the singularity and vanishing
consequences in \cite{Smi00}*{Proposition~4.1 and Corollaries~4.3 and~5.3}.

\subsection{Modules and filtered coordinate rings}\label{subsec:prelim-based-modules}

For $\lambda\in X^+$, let $V(\lambda)$ be the Weyl module of $G$ of highest weight $\lambda$, with highest weight vector $v_\lambda$. Its linear dual $V(\lambda)^*$ is a dual Weyl module. We write $v_{w\lambda} \in V(\lambda)$ for a chosen extremal weight vector, for $w\in W$.

Recall the filtered subspaces $k[G]_{\leq\mu}\subset k[G]$ and
\[
k[G/K]_{\leq\mu}=k[G]_{\leq\mu}\cap k[G/K],
\qquad \mu\in X,
\]
from \cite{BS24}*{Sections~2.1.4 and~4.4}. The Peter--Weyl filtration of $k[G]$ is a good filtration with sections $V(\lambda)^*\otimes V(-w_0\lambda)^*$, and the nonzero graded pieces of the filtration on $k[G/K]$ are the dual Weyl modules $V(\lambda)^*$ for spherical dominant weights $\lambda$. These filtered spaces will be used in the construction of the enveloping variety and the distinguished sections on the wonderful compactification.

\subsection{The Lang map and symmetric-space strata}\label{subsec:prelim-symmetric-strata}

Define the map $\tilde{\iota}:G\to G$ by $\tilde\iota(g)=g\theta(g)^{-1}$. It is clear that $\tilde\iota$ is constant on right $K$-cosets. Therefore it descends to a morphism
\[
\iota:G/K\longrightarrow G.
\]
The twisted $G$-action on $G$ is given by $g*x=gx\theta(g)^{-1}$ for $g\in G$ and $x\in G$. Let $S_\theta=\{g\theta(g)^{-1}\mid g\in G\}\subset G$ be the twisted orbit of $e\in G$. By Springer \cite{Sp85}*{Proposition 2.2}, $S_\theta$ is a closed subvariety of $G$, and the induced map
\[
G/K\longrightarrow S_\theta,
\qquad
 gK\longmapsto g\theta(g)^{-1}
\]
is an isomorphism of affine $G$-varieties. The $B^\pm$-orbits on $G/K$ may equivalently be viewed as the corresponding twisted $B^\pm$-orbits on $S_\theta$. By Springer \cite{Sp85} and Richardson--Springer \cite{RS90}, there are only finitely many such orbits.

Let $\breve T=T/T^\theta$ be the quotient torus with character lattice
$\breve X$, let $\breve Y$ be its cocharacter lattice, and put
$r=\dim\breve T$. Denote by $S=G/K$ the symmetric space.
With $U_P^-=R_u(P_{\I_{\bullet}}^-)$ as in Section~\ref{subsec:prelim-symmetric-pairs},
the local structure theorem \cite{DCP83}*{Proposition 2.3}
gives an isomorphism by left multiplication
\[
U_P^-\times \breve T\cong B^-K/K.
\]
In particular,
\[
\dim S=\ell(w^\bullet)+r,
\]
with the notation fixed above.

We next recall the stratification of $S$ motivated by Poisson geometry. For $u\in W$, define
\[
\mathring S_u
=
\iota^{-1}(B^+\dot u B^-\dot w_0\dot w_\bullet)
=
\iota^{-1}(B^+\dot u\dot w_0\dot w_\bullet\theta(B^-)).
\]
We write $S_u=\overline{\mathring S_u}$ for the Zariski closure.

\begin{definition}
We call the stratum $\mathring S_u$ the \emph{(open) double Bruhat cell} for the symmetric space $G/K$.
\end{definition}

In the group case, equivalently for the symmetric space $(G\times G)/\Delta(G)$, this construction recovers the usual double Bruhat cells of $G$. This motivates the name.

\begin{remark}
Let $\widetilde G=G\times G$. We consider the involution
\[
\widetilde\theta:G\times G\to G\times G,
\qquad
(g,h)\longmapsto(\theta(h),\theta(g)).
\]
Denote the fixed-point group by $\widetilde K=\widetilde G^{\widetilde\theta}$. Consider the symmetric space $\widetilde G/\widetilde K$. It follows from Evens--Lu \cite{EL06} that the intersections of diagonal $G$-orbits with $(B^+,B^-)$-orbits are precisely the $T$-leaves of the natural Poisson structure on $\widetilde G/\widetilde K$ as a Poisson homogeneous space; see also Lu \cite{Lu14}. The map
\[
G\times G\to G,
\qquad
(g,h)\longmapsto g\theta(h)^{-1}
\]
induces an isomorphism $\widetilde G/\widetilde K\cong G$. Under this isomorphism, diagonal $G$-orbits in $\widetilde G/\widetilde K$ are identified with the twisted conjugacy classes, and $(B^+,B^-)$-orbits are identified with $(B^+,\theta(B^-))$ double cosets in $G$. Under this isomorphism, $S$ is precisely the diagonal $G$-orbit of $\widetilde K$. This leads to the stratum $\mathring S_u$.
\end{remark}

\begin{lemma}\label{lem:prelim-strata-intersection}
\begin{enumerate}
\item $\mathring S_u\neq\emptyset$ if and only if $u\leq w^\bullet$ in the Bruhat order.
\item $S_u=\bigsqcup_{u\leq v\leq w^\bullet}\mathring S_v$.
\item For $u\leq w^\bullet$, $\mathring S_u\subset S$ is a smooth, connected, affine subvariety of dimension $\ell(w^\bullet)-\ell(u)+r$.
\end{enumerate}
\end{lemma}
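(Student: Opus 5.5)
The plan is to work inside $G$ via the isomorphism $S\cong S_\theta$ of Springer. Then $\mathring S_u\cong S_\theta\cap B^+\dot uB^-\dot w^\bullet$, and we study how the twisted $G$-orbit $S_\theta$ meets the ordinary double Bruhat cells $G^{u,w^\bullet}$-type sets $B^+\dot uB^-\dot w^\bullet$. The key tool is the local structure theorem $U_P^-\times\breve T\cong B^-K/K$ together with $\theta(U_P^\pm)=U_P^\mp$ and $\theta(B^-)=\dot w^\bullet B^-(\dot w^\bullet)^{-1}$.

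\emph{Step 1: reduction to an open cell.} Compute $\iota$ on the open $B^-$-orbit $B^-K/K\cong U_P^-\times\breve T$. For $g=yt$ with $y\in U_P^-$, $t\in T$, we get $\iota(g)=y\,t\theta(t)^{-1}\,\theta(y)^{-1}\in U_P^-\,T\,U_P^+$. Writing $U_P^+=\dot w^\bullet(\text{subgroup of }U^-)(\dot w^\bullet)^{-1}$ (using $w^\bullet$ maps the roots of $U_P^+$ to negative roots), one checks that $\iota(g)\dot w^\bullet$ lies in $B^-\dot w^\bullet B^-$ and hence the condition $\iota(g)\in B^+\dot uB^-\dot w^\bullet$ becomes a condition on $y\in U_P^-$ alone, namely $y\in B^+\dot u B^-\cdot(\text{stuff})$. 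Concretely, I expect to show that $\mathring S_u\cap B^-K/K\cong (U_P^-\cap B^+\dot uB^-)\times\breve T$, more precisely isomorphic to $R_{u,w^\bullet}\times\breve T$ where $R_{u,w^\bullet}=B^+\dot uB^-\cap B^-\dot w^\bullet B^-/B^-$ is an open Richardson variety in $G/B^-$ (identifying $U_P^-\cong U^-\cap \dot w^\bullet U^+(\dot w^\bullet)^{-1}$, the open cell of $\overline{B^-\dot w^\bullet B^-}/B^-$). Open Richardson varieties are nonempty iff $u\le w^\bullet$, and are smooth, irreducible, affine of dimension $\ell(w^\bullet)-\ell(u)$; this gives all of (1) and (3) on the open piece.

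\emph{Step 2: the whole stratum lies in the open cell.} To pass from $\mathring S_u\cap B^-K/K$ to $\mathring S_u$, I would show $\mathring S_u\subset B^-K/K$ for all $u$. Indeed $\iota(B^-K/K)=S_\theta\cap B^-\,T\,\theta(B^-)$-type set, and $B^+\dot uB^-\dot w^\bullet=B^+\dot u\dot w^\bullet\theta(B^-)$; one wants $S_\theta\cap B^+W\theta(B^-)$-cells to land in the big cell $B^-\theta(B^-)$. I would argue by $T$-stability and the fact that $\iota^{-1}(B^-\dot w^\bullet B^-\dot w^\bullet)$... — this is the main obstacle. A cleaner route: every point of $S$ lies in some $\mathring S_u$ (the $B^+\times\theta(B^-)$ cells partition $G$), and the complement of $B^-K/K$ is the $B^-$-stable divisor; I would show instead via an alternative chart $B^+K/K$ (by symmetry of the argument with $B^+$, using the analogous local structure theorem) that $\mathring S_u\subset B^+\dot u\dot w^\bullet\theta(B^-)$ forces the $\theta(B^-)$-coset condition, and the translate $\dot w^\bullet$ of the open orbit covers it. If this fails, fall back on the Evens--Lu description: $\mathring S_u$ is a $T$-leaf union, a smooth locally closed subvariety whose dimension is computed by transversality of $S_\theta$ with $B^+\dot uB^-\dot w^\bullet$ (Poisson/tangent argument), giving (3) directly, with connectedness and affineness from the chart.

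\emph{Step 3: closure relation.} For (2), the closure of $B^+\dot uB^-\dot w^\bullet$ in $G$ is $\bigsqcup_{v\ge u}B^+\dot vB^-\dot w^\bullet$, so $S_u\subset\bigsqcup_{u\le v\le w^\bullet}\mathring S_v$. Conversely, inside the chart of Step 1 this reduces to the known closure relation $\overline{R_{u,w^\bullet}}\cap(\text{open Schubert cell of }w^\bullet)=\bigsqcup_{u\le v\le w^\bullet}R_{v,w^\bullet}$ for Richardson varieties, times $\breve T$; combined with Step 2 this gives equality.
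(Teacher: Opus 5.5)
Your Step~1 is correct. It is the first isomorphism of Proposition~\ref{prop:chart-richardson}, transported to $G/B^-$ by $gB^+\mapsto g\dot w_0B^-$, which carries $\mathring{\mathcal R}_{w_\bullet,uw_0}$ to your $R_{u,w^\bullet}$.

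\textbf{The gap is Step~2: the inclusion $\mathring S_u\subset B^-K/K$ is false.} It already fails for $G=\mathrm{SL}_2$ with $\theta(g)=(g^t)^{-1}$. Here $w_\bullet=e$, $w^\bullet=s$, $\iota(gK)=gg^t$, and $\iota(S)$ is the variety of symmetric matrices of determinant $1$. Since $B^+\dot sB^-\dot s=B^+$ and $G=B^+\sqcup B^+B^-\dot s$, you get
\[
\iota(\mathring S_e)=\{x\in\iota(S)\mid x_{21}\neq0\}.
\]
On the other hand, $\iota(B^-K/K)=\{x_{11}\neq0\}$ and $\iota(B^+K/K)=\{x_{22}\neq0\}$. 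The point $x=\begin{pmatrix}0&\sqrt{-1}\\ \sqrt{-1}&0\end{pmatrix}$ lies in $\iota(\mathring S_e)$ but in neither chart. So your alternative via $B^+K/K$ fails as well. Moreover, the strata are stable only under the twisted $T$-action, so no translation argument can move such points into a chart.

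\textbf{What Step~1 does and does not give.} It gives the ``if'' half of (1). The ``only if'' half follows from a closure argument you did not state: $\iota(S)\subseteq\overline{\iota(B^-K/K)}\subseteq\overline{B^-\dot w^\bullet B^-}\,(\dot w^\bullet)^{-1}$, combined with $B^+\dot uB^-\cap B^-\dot vB^-\neq\emptyset\iff u\le v$. It also gives the inclusion $S_u\subseteq\bigsqcup_{u\le v\le w^\bullet}\mathring S_v$. It does not give any of the following:
\begin{enumerate}
\item smoothness and the dimension formula at points of $\mathring S_u$ outside the chart;
\item connectedness of $\mathring S_u$;
\item the reverse inclusion $\mathring S_v\subseteq S_u$ for $u\le v\le w^\bullet$ in (2).
\end{enumerate}
Each of these needs $\mathring S_u\cap B^-K/K$ to be dense in $\mathring S_u$, i.e.\ no irreducible component of $\mathring S_u$ may lie in the divisor $S\setminus B^-K/K$. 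Nothing in your proposal addresses this.

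\textbf{How the paper supplies the global input.} The paper does not work in a chart for this lemma. It takes nonemptiness from the Chan--Lu--To result on twisted conjugacy classes meeting Bruhat cells. It takes the closure relation from Richardson's theorem on intersections of double cosets, applied to the diagonal orbit $S\subset\widetilde G/\widetilde K$. It takes smoothness, dimension and connectedness from Richardson's transversality corollary. Your Evens--Lu fallback points in that direction. However, ``connectedness and affineness from the chart'' does not follow: an open subset being connected or affine says nothing about the whole unless density is known. Affineness should instead come directly from the fact that $\iota(\mathring S_u)$ is closed in the affine variety $B^+\dot uB^-\dot w^\bullet$, because $\iota(S)$ is closed in $G$.
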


\begin{proof}
We prove Part~(1). Let $\theta'=\operatorname{Int}(\dot w^\bullet)\theta$. Then $\theta'$ fixes $B^+$ and $B^-$. Let $C$ be the $\theta'$-twisted conjugacy class of $(\dot w^\bullet)^{-1}$. Then $\iota(G/K)=C\cdot\dot w^\bullet$. Since $B^+K/K\subset G/K$ is dense, $B^+(\dot w^\bullet)^{-1}B^+\cap C$ is dense in $C$. By \cite{CLT10}*{Section 2.4}, the intersection 
\[
C\cap B^+\dot uB^-=(\iota(G/K)\cap B^+\dot uB^-\dot w^\bullet)(\dot w^\bullet)^{-1}
\]
is non-empty if and only if $u\leq w^\bullet=(w^\bullet)^{-1}$.

We prove Part~(2). We follow the notation in the preceding remark. By the Bruhat decomposition, we have
\[
\overline{B^+\dot uB^-\dot w^\bullet}
=
\bigsqcup_{u\leq v}B^+\dot vB^-\dot w^\bullet.
\]
We can equivalently consider the closure relations in $\widetilde G/\widetilde K$, since $S$ is closed in $G$. Then we can apply Richardson's theorem. In particular,
\[
S_u
=
S\cap \iota^{-1}(\overline{B^+\dot uB^-\dot w^\bullet})
=
\bigsqcup_{u\leq v\leq w^\bullet}\mathring S_v.
\]
The finiteness condition in Richardson's theorem is not needed here, since $S$ is closed; see \cite{Ri92}*{Theorem 2.1}. The assertions about smoothness, connectedness, and dimension in Part~(3) follow from \cite{Ri92}*{Corollary 1.5}. Finally, $B^+\dot uB^-\dot w^\bullet$ is affine, and $\iota(\mathring S_u)$ is closed in this double Bruhat cell. Hence $\mathring S_u$ is affine.
\end{proof}

\subsection{Frobenius splittings}\label{subsec:prelim-frobenius}
We follow \cite{BK05}*{Definitions 1.1.2--1.1.3} in this subsection.  
Assume throughout this subsection that $\operatorname{char}k=p>0$. Let $X$ be a separated scheme of finite type over $k$, and let $F:X\to X$ be the absolute Frobenius morphism. A Frobenius splitting of $X$ is an $\CO_X$-linear map $\varphi:F_*\CO_X\to\CO_X$ such that $\varphi\circ F^\#=\operatorname{id}_{\CO_X}$. If $Y\subset X$ is a closed subscheme with ideal sheaf $\CI_Y$, then $Y$ is compatibly split by $\varphi$ if $\varphi(F_*\CI_Y)\subset\CI_Y$; in this case, $\varphi$ induces a Frobenius splitting of $Y$. For a $B^\pm$-variety, we use the notion of a $B^\pm$-canonical splitting in \cite{BK05}*{Definition 4.1.4}. 

By \cite{BK05}*{Proposition 1.2.1}, a Frobenius split scheme is reduced. Scheme-theoretic intersections and unions of compatibly split closed subschemes, as well as the irreducible components of a compatibly split closed subscheme, are again compatibly split. We will repeatedly use these closure properties for the strata considered below.

Suppose that $X$ is smooth. The trace map identifies $H^0(X,\omega_X^{1-p})$ with the space of $\CO_X$-linear maps $F_*\CO_X\to\CO_X$ by \cite{BK05}*{Proposition 1.3.7}. Under this identification, the local criterion in \cite{BK05}*{Theorem 1.3.8} determines whether a section of $\omega_X^{1-p}$ defines a Frobenius splitting. In particular, an anticanonical section $\sigma\in H^0(X,\omega_X^{-1})$ gives a candidate splitting $\sigma^{p-1}\in H^0(X,\omega_X^{1-p})$. If $\sigma^{p-1}$ splits $X$, then the zero scheme of $\sigma$ is compatibly split and reduced by \cite{BK05}*{Proposition 1.3.11}.

Let $D$ be an effective Cartier divisor on $X$. Write $s_D\in H^0(X,\CO_X(D))$ for its canonical section. Following \cite{BK05}*{Definition 1.4.1}, a Frobenius splitting of $X$ relative to $D$, or a $D$-splitting, is an $\CO_X$-linear map $\psi$ for which the composite
\[
F_*\CO_X\xrightarrow{F_*s_D}F_*\CO_X(D)\xrightarrow{\psi}\CO_X
\]
is a Frobenius splitting. If $X$ is smooth, \cite{BK05}*{Theorem 1.4.10} identifies compatibility with an effective divisor $D$ with splitting relative to $(p-1)D$. Consequently, whenever $\sigma^{p-1}$ defines a splitting, it splits $X$ relative to $(p-1)Z(\sigma)$. This is the form used in Sections~\ref{sec:wonderful-splitting} and \ref{sec:equivariant-embeddings}, where the splitting section is obtained from an explicit anticanonical divisor.


We also recall the stable form of splitting relative to a divisor. Following \cite{Smi00}*{Definition 3.5}, $X$ is stably Frobenius split along an effective Cartier divisor $D$ if, for some $e\geq1$, the map $\CO_X\to F_*^e\CO_X(D)$ determined by the canonical section of $D$ admits an $\CO_X$-linear retraction.

Following \cite{Smi00}*{Section~2}, an $F$-finite reduced ring $R$ of characteristic $p>0$ is called strongly $F$-regular if, for every element $c\in R$ not contained in any minimal prime, there is an integer $e\geq1$ such that the $R$-linear map
\[
R\longrightarrow F_*^eR,\qquad 1\longmapsto F_*^e c,
\]
splits. The ring $R$ is strongly $F$-regular if and only if all of its local rings are strongly $F$-regular. Thus we define an scheme $X$ of finite type over $k$ to be strongly $F$-regular if all of its local rings are strongly $F$-regular.

For a projective variety $P$ and an ample invertible sheaf $\CL$, write
\[
R(P,\CL)=\bigoplus_{n\geq0}H^0(P,\CL^{\otimes n}).
\]
By \cite{Smi00}*{Definition 3.2}, $P$ is globally $F$-regular if $R(P,\CL)$ is strongly $F$-regular for some ample $\CL$. 

\begin{theorem}[\cite{Smi00}*{Theorem 3.10}]\label{thm:gloFre}
Let $P$ be an irreducible projective variety over $k$. The following conditions are equivalent:
\begin{enumerate}[label=\textnormal{(\arabic*)}]
\item $P$ is globally $F$-regular.
\item The variety $P$ is stably Frobenius split along an ample effective Cartier divisor $D$, and $P\setminus D$ is strongly $F$-regular.
\item The variety $P$ is stably Frobenius split along every effective Cartier divisor.
\end{enumerate}
\end{theorem}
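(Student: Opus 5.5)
The plan is to prove the cycle $(1)\Rightarrow(3)\Rightarrow(2)\Rightarrow(1)$. Throughout, the section ring $R=R(P,\CL)$ and sheaves on $P$ are related by two standard facts. First, $P=\operatorname{Proj}R$. Second, graded $R$-linear maps $R\to F^e_*R$ correspond to $\CO_P$-linear maps $\CO_P\to F^e_*\CO_P(D)$ by taking degree-zero parts of homogeneous localizations and, in the other direction, summing global sections of all twists. Here $F^e_*R$ carries the $\tfrac1{p^e}\BZ$-grading, and only integral degrees are retained.

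For $(1)\Rightarrow(3)$, let $R=R(P,\CL)$ be strongly $F$-regular and let $D$ be an effective Cartier divisor. Choose $m\gg0$ so that $\CL^{m}(-D)$ has a nonzero section $t$. Then $c=t\cdot s_D\in R_m$ is homogeneous and nonzero, and its divisor $D'=\operatorname{div}(c)$ satisfies $D'\geq D$. Strong $F$-regularity gives a splitting of $R\to F^e_*R$, $1\mapsto F^e_*c$. Since $R$ is graded, we may replace this splitting by its degree-preserving component, so we may take it graded. Sheafifying gives a retraction of $\CO_P\to F^e_*\CO_P(D')$. Composing with $F^e_*\CO_P(D)\hookrightarrow F^e_*\CO_P(D')$ yields a retraction for $D$.

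For $(3)\Rightarrow(2)$, take any ample effective $D$, which exists because $P$ is projective. Then $U=P\setminus D=\operatorname{Spec}A$ is affine. Let $0\neq c\in A$. For $N\gg0$, $c\,s_D^N$ extends to a section of $\CO_P(ND)$ whose divisor $E$ is effective Cartier. Restricting the retraction of $\CO_P\to F^e_*\CO_P(E)$ to $U$ splits $A\to F^e_*A$, $1\mapsto F^e_*c$, up to a unit. Hence $A$ is strongly $F$-regular. Strong $F$-regularity of a ring localizes, so this also covers the local rings of $U$.

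The main obstacle is $(2)\Rightarrow(1)$. We take $\CL=\CO_P(D)$, or a power of it, so that $s=s_D\in R_1$. Summing global sections of the twists $\CL^{n}$ of the stable retraction for $D$ turns it into a graded splitting of $R\to F^e_*R$, $1\mapsto F^e_*s$. We would then apply the Hochster--Huneke criterion. It says that if $R_s$ is strongly $F$-regular and $R\to F^e_*R$, $1\mapsto F^e_*s$, splits for some $e$, then $R$ is strongly $F$-regular. It remains to check that $R_s\cong A[s,s^{-1}]$ is strongly $F$-regular. The degree-zero part of $R_s$ is $A=k[P\setminus D]$, which is strongly $F$-regular by hypothesis. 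With $\deg s=1$, the localization $R_s$ is exactly the Laurent polynomial ring over $A$. This preserves strong $F$-regularity, since it is smooth over $A$.

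The delicate points are the following. The section ring must be normal, and this follows from the splitting once we pass to a normal ring, using that $P$ is normal because $P\setminus D$ is and the splitting forces $S_2$ along $D$. The graded and sheaf-theoretic splittings must be matched correctly in the $\tfrac1{p^e}$-graded setting. If $\CL$ has to be replaced by a power, we would use that the Veronese subring is a direct summand.
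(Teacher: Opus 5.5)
Your proposal is correct and is essentially the standard argument behind \cite{Smi00}*{Theorem~3.10}, which the paper quotes without proof: you use the dictionary between graded splittings of $R(P,\CL)$ and retractions of $\CO_P\to F^e_*\CO_P(D)$, followed by the Hochster--Huneke criterion \cite{HH89} applied to $R(P,\CO_P(D))$ with $R_{s_D}\cong k[P\setminus D][s_D,s_D^{-1}]$. One small correction to your list of delicate points: that criterion needs $R$ only to be F-finite and reduced (here $R$ is a domain, since $P$ is integral), so normality of $R$ or $P$ is a consequence rather than a prerequisite, and neither the unproved $S_2$ remark nor the Veronese subring is needed, since $P=\operatorname{Proj}R(P,\CO_P(D))$ already holds for the ample sheaf $\CO_P(D)$ itself.
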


A globally $F$-regular projective variety is normal, Cohen--Macaulay, and Frobenius split by \cite{Smi00}*{Proposition 4.1}. Moreover, if $\CL$ is a nef invertible sheaf on such a variety, then $H^i(P,\CL)=0$ for every $i>0$ by \cite{Smi00}*{Corollary 4.3}. These consequences will be applied to the closures of the symmetric-space strata in Section~\ref{subsec:global-f-regularity}.

\section{Enveloping varieties}\label{sec:enveloping-variety}

\subsection{The wonderful compactification}\label{subsec:wonderful-geometry}

Retain the notation of Section \ref{sec:preliminaries}, and assume that
$G=G_{\mathrm{sc}}$ is semisimple and simply connected. Let
$\pi_G:G\to G_{\mathrm{ad}}=G/Z(G)$ be the adjoint quotient. The
involution $\theta$ descends to $\theta_{\mathrm{ad}}$ on $G_{\mathrm{ad}}$.
Put
\[
K=G^\theta,\qquad
H=\pi_G^{-1}(G_{\mathrm{ad}}^{\theta_{\mathrm{ad}}}),\qquad
S_{\mathrm{sc}}=G/K,\qquad S_{\mathrm{ad}}=G/H,
\]
where the inverse image defining $H$ is scheme-theoretic. Thus
$S_{\mathrm{ad}}\simeq G_{\mathrm{ad}}/G_{\mathrm{ad}}^{\theta_{\mathrm{ad}}}$.

Let $Q\subset X=X^*(T)$ be the root lattice, and set
\[
\breve X_{\mathrm{sc}}=(1-\theta)X,\qquad
\breve X_{\mathrm{ad}}=(1-\theta)Q,\qquad
\breve X_{\mathrm{sc}}^+=\breve X_{\mathrm{sc}}\cap X^+.
\]
The torus with character lattice $\breve X_{\mathrm{sc}}$ is
$\breve T_{\mathrm{sc}}=T/T^\theta$.
For $\mu\in X$, write $\overline\mu=\mu-\theta(\mu)$. Since $G$ is
simply connected, $X=\bigoplus_{i\in\I}\mathbb Z\omega_i$, where
$\omega_i$ are the fundamental weights. Fix a set
$\Ir\subset\I_\circ$ of representatives of the $\tau$-orbits, and put
\[
Q_\theta^+=\sum_{i\in\Ir}\mathbb N\overline\alpha_i.
\]
The $\overline\alpha_i$, for $i\in\Ir$, form a $\mathbb Z$-basis of
$\breve X_{\mathrm{ad}}$; see \cite{BS25b}*{Sections 2.1--2.3}.

\begin{remark}\label{rem:adjoint-spherical-lattice}
The inclusion $\breve X_{\mathrm{ad}}\subseteq\breve X_{\mathrm{sc}}\cap Q$
need not be an equality. For $G=\mathrm{SL}_2$ with
$\theta(g)=(g^t)^{-1}$, let $\omega$ be the fundamental weight, so
$\alpha=2\omega$ and $\theta(\omega)=-\omega$. Then
\[
\breve X_{\mathrm{sc}}=2\mathbb Z\omega=Q,
\qquad
\breve X_{\mathrm{ad}}=2\mathbb Z\alpha=2Q
\subsetneq\breve X_{\mathrm{sc}}\cap Q.
\]
This discrepancy is a combinatorial shadow of the fact that $H$ may be strictly bigger than $KZ(G)$.
In this example, $Z(G)\subset K$ and $H/K\simeq\mu_2$.
\end{remark}

For $\lambda\in\breve X_{\mathrm{sc}}$, let $\chi_\lambda$ be the
rational $B^-$-eigenfunction on $S_{\mathrm{sc}}$ with left character $-\lambda$,
normalized by $\chi_\lambda(eK)=1$. Then
\[
\chi_{\lambda+\mu}=\chi_\lambda\chi_\mu,
\qquad \chi_\lambda\in k[S_{\mathrm{sc}}]
\quad\text{for }\lambda\in\breve X_{\mathrm{sc}}^+;
\]
see \cite{BS25b}*{Section 2.4 and Lemma 2.2}, with the opposite Borel
convention. 

The quotient $D=H/K$ is a finite diagonalizable group scheme: the map
$h\mapsto h\theta(h)^{-1}$ identifies it with a subgroup scheme of
$Z(G)$. The right $D$-action gives
\begin{equation}\label{eq:enveloping-D}
\begin{gathered}
k[S_{\mathrm{sc}}]^D=k[S_{\mathrm{ad}}],\qquad
X^*(D)=\breve X_{\mathrm{sc}}/\breve X_{\mathrm{ad}},\\
k[S_{\mathrm{sc}}]
=\bigoplus_{c\in X^*(D)}k[S_{\mathrm{sc}}]_c.
\end{gathered}
\end{equation}
For $\lambda\in\breve X_{\mathrm{sc}}$, write $[\lambda]$ for its
class in $X^*(D)$. Under this identification, $\chi_\lambda$ has
right $D$-character $[\lambda]$, and $k[S_{\mathrm{sc}}]_{[\lambda]}$
denotes the corresponding character space. Character spaces are
understood scheme-theoretically, and taking them is exact.

Recall the filtration $k[S_{\mathrm{sc}}]_{\leq\lambda}$ from
Section \ref{subsec:prelim-based-modules}. Write
$\nu\preceq\lambda$ if $N\nu\leq N\lambda$ for some integer $N>0$,
and set
\[
k[S_{\mathrm{sc}}]_{\preceq\lambda}
=\sum_{\substack{\nu\in\breve X_{\mathrm{sc}}^+\\\nu\preceq\lambda}}
k[S_{\mathrm{sc}}]_{\leq\nu},
\]
as in \cite{BS25b}*{(4.1)}. For $\lambda\in\breve X_{\mathrm{sc}}$, define
\begin{equation}\label{eq:enveloping-E}
E_\lambda=\bigl(k[S_{\mathrm{sc}}]_{\preceq\lambda}\bigr)_{[\lambda]}.
\end{equation}

\begin{remark}
In our previous work \cite{BS25b}, we use the full filtered piece $k[S_{\mathrm{sc}}]_{\preceq\lambda}$ in degree
$\lambda$. Our $E_\lambda$ retains only the component with right
$D$-character $[\lambda]$. This character condition implements the
diagonal $D$-quotient needed for the adjoint symmetric space.
\end{remark}

\begin{lemma}\label{lem:enveloping-order}
Let $\lambda,\nu\in\breve X_{\mathrm{sc}}$. Then
\[
[\lambda]=[\nu]
\quad\Longleftrightarrow\quad
\lambda-\nu\in\breve X_{\mathrm{ad}}.
\]
If $\nu\preceq\lambda$, these conditions are equivalent to
$\lambda-\nu\in Q_\theta^+$, and then $\nu\leq\lambda$.
Consequently,
\[
E_\lambda=\bigl(k[S_{\mathrm{sc}}]_{\leq\lambda}\bigr)_{[\lambda]},
\]
and $E_\lambda$ has a good filtration with factors $V(\nu)^*$ for
$\nu\in\breve X_{\mathrm{sc}}^+$ such that
$\lambda-\nu\in Q_\theta^+$, each occurring once.
\end{lemma}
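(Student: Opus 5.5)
The lemma has three parts, and I will prove them in order: the character equivalence, the order statement, and then the consequences for $E_\lambda$.

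\emph{Character equivalence.} This is immediate from \eqref{eq:enveloping-D}. The identification $X^*(D)=\breve X_{\mathrm{sc}}/\breve X_{\mathrm{ad}}$ is compatible with $\lambda\mapsto[\lambda]$. The class $[\lambda]$ is the right $D$-character of $\chi_\lambda$, and $\lambda\mapsto\chi_\lambda$ is multiplicative. So $[\cdot]$ is a group homomorphism $\breve X_{\mathrm{sc}}\to X^*(D)$ with kernel $\breve X_{\mathrm{ad}}$. If one wants to avoid relying on the identification, one checks directly that $\chi_{\overline{\alpha}_i}$ is $D$-invariant. It descends to $S_{\mathrm{ad}}$, since $\overline\alpha_i$ is a character of $T_{\mathrm{ad}}/T_{\mathrm{ad}}^{\theta}$.

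\emph{Order statement.} Assume $\nu\preceq\lambda$, so $N(\lambda-\nu)\in\sum_{i\in\I}\mathbb N\alpha_i$ for some $N>0$. Hence $\lambda-\nu$ lies in the $\mathbb Q_{\ge0}$-span of simple roots. It also lies in $\breve X_{\mathrm{sc}}$, so it is $\theta$-anti-invariant.

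Any $\theta$-anti-invariant element $\beta=\sum c_i\alpha_i$ is determined by its image under $\tfrac12(1-\theta)$. We have $\tfrac12(1-\theta)\alpha_i=0$ for $i\in\I_\bullet$, since $\theta\alpha_i=\alpha_i$ there. Also $\overline\alpha_i=\overline\alpha_{\tau i}$. Hence $\beta=\tfrac12\sum_{i\in\Ir}d_i\overline\alpha_i$ with rational $d_i\ge0$, where $d_i$ sums the $c_j$ over the $\tau$-orbit of $i$. Here I use that the expansion of $\overline\alpha_i$ has coefficient $1$ on $\alpha_i$ and $\alpha_{\tau i}$, and otherwise only $\I_\bullet$-terms. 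Reading off the $\I_\circ$-coordinates gives nonnegative rational coefficients in the basis $\{\overline\alpha_i\}_{i\in\Ir}$.

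If moreover $\lambda-\nu\in\breve X_{\mathrm{ad}}$, these coefficients are integers, because $\{\overline\alpha_i\}_{i\in\Ir}$ is a $\mathbb Z$-basis of $\breve X_{\mathrm{ad}}$. So $\lambda-\nu\in Q_\theta^+$. The converse is trivial, since $Q_\theta^+\subset\breve X_{\mathrm{ad}}$. Each $\overline\alpha_i=\alpha_i+w_\bullet\alpha_{\tau i}$ is a nonnegative integer combination of simple roots, so $\nu\le\lambda$ follows.

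The delicate point is the coefficient bookkeeping when $\tau i=i$, where $\overline\alpha_i$ has coefficient $2$ on $\alpha_i$ if $w_\bullet\alpha_i$ contributes another $\alpha_i$. I would handle it by working with the coefficients of $\alpha_j$ for $j\in\I_\circ$, which are the only ones that matter. Integrality of the $d_i$ then comes from the lattice statement, not from the coefficient count.

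\emph{Consequences for $E_\lambda$.} By definition, $k[S_{\mathrm{sc}}]_{\preceq\lambda}=\sum_{\nu\preceq\lambda}k[S_{\mathrm{sc}}]_{\le\nu}$. The filtration is compatible with the right $D$-action, and taking character spaces is exact. So the $[\lambda]$-part is the sum of the $[\lambda]$-parts of the $k[S_{\mathrm{sc}}]_{\le\nu}$. Its associated graded has pieces $V(\nu')^*$ with $\nu'\le\nu\preceq\lambda$, each occurring once, from the good filtration on $k[S_{\mathrm{sc}}]$.

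The piece $V(\nu')^*$ sits in right $D$-character $[\nu']$, because its lowest weight vector is $\chi_{\nu'}$ and $D$ commutes with $G$. Hence it survives in $E_\lambda$ exactly when $[\nu']=[\lambda]$. Since $\nu'\preceq\lambda$, the previous part shows this happens precisely when $\lambda-\nu'\in Q_\theta^+$, and then $\nu'\le\lambda$. Therefore all these pieces already lie in $k[S_{\mathrm{sc}}]_{\le\lambda}$, which gives $E_\lambda=(k[S_{\mathrm{sc}}]_{\le\lambda})_{[\lambda]}$ together with the stated good filtration.
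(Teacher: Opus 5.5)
Your proposal is correct and follows the paper's proof step for step: the character equivalence comes from \eqref{eq:enveloping-D}, integrality of the cone coefficients comes from the $\mathbb Z$-basis $\{\overline\alpha_i\}_{i\in\Ir}$, and the equality for $E_\lambda$ comes from comparing good-filtration factors after taking $[\lambda]$-components. The one difference is that you prove the cone statement directly from $\beta=\tfrac12(1-\theta)\beta$, using $\overline\alpha_j=0$ for $j\in\I_\bullet$ and $\overline\alpha_j=\overline\alpha_{\tau j}$, instead of citing \cite{BS25b}*{Lemma 2.3}; note also that each factor $V(\nu')^*$ carries a \emph{single} $D$-character because it is indecomposable, as the paper says, and $\chi_{\nu'}$ only identifies that character as $[\nu']$.
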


\begin{proof}
The first equivalence follows from \eqref{eq:enveloping-D}.
Suppose $\nu\preceq\lambda$ and $[\lambda]=[\nu]$. Then
$\lambda-\nu\in\breve X_{\mathrm{ad}}\subset Q$, so
$\nu\leq\lambda$. By \cite{BS25b}*{Lemma 2.3},
$\lambda-\nu$ lies in the nonnegative rational cone generated by
the $\overline\alpha_i$. Since these form a $\mathbb Z$-basis of
$\breve X_{\mathrm{ad}}$, its coefficients are nonnegative integers.
Thus $\lambda-\nu\in Q_\theta^+$. The converse follows from
$Q_\theta^+\subset\breve X_{\mathrm{ad}}$.

The good-filtration factor $V(\nu)^*$ of
$k[S_{\mathrm{sc}}]$ is indecomposable, and the $D$-action commutes
with $G$. It therefore has a single $D$-character, namely $[\nu]$,
as its $B^-$-eigenline is represented by $\chi_\nu$.
Exactness of character spaces and the preceding
equivalence therefore retain precisely the factors with
$\lambda-\nu\in Q_\theta^+$. These are also the factors of
$\bigl(k[S_{\mathrm{sc}}]_{\leq\lambda}\bigr)_{[\lambda]}$,
proving the asserted equality.
\end{proof}

Let $j:D\hookrightarrow\breve T_{\mathrm{sc}}$ be dual to the
quotient $\breve X_{\mathrm{sc}}\twoheadrightarrow X^*(D)$. Define
\[
\widehat S=(S_{\mathrm{sc}}\times\breve T_{\mathrm{sc}})/D,
\qquad (x,t)d=(xd,j(d)^{-1}t).
\]
This is a homogeneous space for $G\times\breve T_{\mathrm{sc}}$, with
coordinate ring
\[
k[\widehat S]
=\bigl(k[S_{\mathrm{sc}}]\otimes k[\breve X_{\mathrm{sc}}]\bigr)^D
=\bigoplus_{\lambda\in\breve X_{\mathrm{sc}}}
k[S_{\mathrm{sc}}]_{[\lambda]}e^\lambda.
\]
Here $e^\lambda$ denotes the character $\lambda$ of $\breve T_{\mathrm{sc}}$.
Following \cite{BS25b}*{Section 4.1}, define the affine embedding of $\hat{S}$ by 
\begin{equation}\label{eq:enveloping-algebra}
\widehat E=\operatorname{Spec}
\left(\bigoplus_{\lambda\in\breve X_{\mathrm{sc}}}
E_\lambda e^\lambda\right).
\end{equation}
Multiplication is induced
from $k[\widehat S]$, and the grading defines a right
$\breve T_{\mathrm{sc}}$-action with
$F(xt)=\lambda(t)F(x)$ for homogeneous $F$ of degree $\lambda$.
If $\widetilde E$ denotes the enveloping variety in \cite{BS25b}*{Section 4.1}, then
$k[\widehat E]=k[\widetilde E]^D$ for the diagonal action
$d\cdot(fe^\lambda)=[\lambda](d)^{-1}(R_df)e^\lambda$, where
$(R_df)(x)=f(xd)$.
Thus $\widehat E$ is normal and of finite type, the quotient
$q_D:\widetilde E\to\widehat E$ is finite and surjective, and
$\widehat S\subset\widehat E$ is the open
$G\times\breve T_{\mathrm{sc}}$-orbit.

Choose a regular spherical weight $\eta\in\breve X_{\mathrm{sc}}^+$,
meaning that $\langle\alpha_i^\vee,\eta\rangle>0$ for every
$i\in\I_\circ$. Set
\[
R_\eta=\bigoplus_{n\geq0}E_{n\eta}e^{n\eta},\qquad
\mathbf X_\eta=\operatorname{Proj}R_\eta
=\widehat E/\!\!/_\eta\breve T_{\mathrm{sc}}.
\]

\begin{theorem}\label{thm:enveloping-quotient}
The semistable and stable loci in $\widehat E$ coincide and are
independent of the regular spherical weight $\eta$. Denote this
common locus by $\widehat E^0$. The geometric quotient
\[
\widehat E^0/\breve T_{\mathrm{sc}}=\mathbf X_\eta
\]
is the wonderful compactification $\X$ of $S_{\mathrm{ad}}$.
\end{theorem}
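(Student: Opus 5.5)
The plan is to deduce the statement from the corresponding result for the simply connected enveloping variety $\widetilde E$ of \cite{BS25b}*{Section 4}, by passing to $D$-invariants. Recall that \cite{BS25b} shows three things for the grading by $\breve X_{\mathrm{sc}}$ on $k[\widetilde E]=\bigoplus_\lambda k[S_{\mathrm{sc}}]_{\preceq\lambda}e^\lambda$. First, the semistable and stable loci for any regular spherical $\eta$ agree. Second, they are independent of $\eta$. Third, the GIT quotient $\operatorname{Proj}\bigoplus_n k[S_{\mathrm{sc}}]_{\preceq n\eta}$ is the wonderful compactification of $S_{\mathrm{sc}}$, hence of $S_{\mathrm{ad}}$, in the sense of De Concini--Procesi/De Concini--Springer. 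We take this as the input.

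\emph{Step 1: semistable and stable loci.} We have $k[\widehat E]=k[\widetilde E]^D$, and the quotient $q_D:\widetilde E\to\widehat E$ is finite, surjective and $\breve T_{\mathrm{sc}}$-equivariant. The $D$-action is given via $j$ composed with the right torus action, so $D$ acts through a finite subgroup of $\breve T_{\mathrm{sc}}$ twisted by the right $D$-action on $S_{\mathrm{sc}}$. Semistability is tested by nonvanishing of homogeneous invariants of degree $n\eta$. An element $f e^{n\eta}$ with $f\in k[S_{\mathrm{sc}}]_{\preceq n\eta}$ has some power $f^{|D|}$ lying in $E_{|D|n\eta}$. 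Hence $x\in\widetilde E$ is $\eta$-semistable if and only if $q_D(x)$ is, so $\widehat E^{ss}=q_D(\widetilde E^{ss})$. Since $q_D$ is finite, stabilizers and orbit closedness are preserved up to finite groups, so stable points correspond as well. The equality of the two loci and their independence of $\eta$ then transfer from $\widetilde E$.

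\emph{Step 2: the quotient.} Geometric quotients commute with finite quotients here. Concretely, $\widehat E^0/\breve T_{\mathrm{sc}}=\operatorname{Proj}R_\eta$, and the Veronese subring $\bigoplus_n E_{Nn\eta}$ agrees with $\bigoplus_n (k[S_{\mathrm{sc}}]_{\preceq Nn\eta})_{[0]}$ when $N\eta\in\breve X_{\mathrm{ad}}$; for instance, take $N=|X^*(D)|$. Passing to a Veronese does not change $\operatorname{Proj}$. It remains to identify $\operatorname{Proj}\bigoplus_n(k[S_{\mathrm{sc}}]_{\leq n\lambda})_{[0]}$ for $\lambda=N\eta$ regular with $\lambda\in\breve X_{\mathrm{ad}}$. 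By Lemma~\ref{lem:enveloping-order}, its degree-$n$ piece has a good filtration with factors $V(\nu)^*$, $n\lambda-\nu\in Q^+_\theta$. This is exactly the section ring $\bigoplus_n H^0(\X,\CL_{n\lambda})$ of the ample line bundle on the wonderful compactification $\X$ of $S_{\mathrm{ad}}$, by De Concini--Springer \cite{DCS99}: the sections there are generated by translates of the boundary sections $\sigma_i$ ($i\in\Ir$, of weight $\overline\alpha_i$) times $V(\nu)^*$. Thus $\mathbf X_\eta\cong\X$.

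\emph{Main obstacle.} The delicate point is the identification in Step 2, namely matching $E_{n\lambda}$ with $H^0(\X,\CL_{n\lambda})$ compatibly with multiplication in arbitrary odd characteristic. The subtlety is the distinction between $\breve X_{\mathrm{ad}}$ and $\breve X_{\mathrm{sc}}\cap Q$ noted in Remark~\ref{rem:adjoint-spherical-lattice}: one must use the $D$-character, not merely the root lattice, to cut out exactly the adjoint sections. I would handle this by embedding both rings into $k[S_{\mathrm{sc}}]\otimes k[\breve X_{\mathrm{sc}}]$ via restriction to the open orbit. Both are then subrings with the same good-filtration factors, and one containment is the definition of $E_\lambda$. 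Equality follows by comparing characters of the $G$-modules. Alternatively, one can simply invoke the description of the wonderful compactification as $(\overline{\mathbf X}_{\mathrm{sc}})/D$ together with the result of \cite{BS25b} for $\widetilde E$.
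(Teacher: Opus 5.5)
Your argument is viable, but it takes a different route from the paper. The paper proves the statement in one stroke: it reruns the argument of \cite{BS25b}*{Theorem~4.10} directly on $\widehat E$, with the weight monoid replaced by $\{(\nu,\lambda)\mid\lambda-\nu\in Q_\theta^+\}$. The instability criterion, the equality of stable and semistable loci, and the identification of the quotient with $\X$ all come out of that one analysis of the character-restricted monoid.

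You split the work in two. First, you transport semistability, stability and $\eta$-independence from $\widetilde E$ through the finite quotient $q_D$. This is the same device the paper itself uses later in Lemma~\ref{lem:codim2}, where $q_D^{-1}(\widehat E^0)=\widetilde E^0$. Second, you identify the quotient with $\X$ by matching a Veronese subring of $R_\eta$ with the section ring of an ample boundary line bundle.

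This reverses the paper's logic. In the paper, the identification $H^0(\X,\CL_{-\lambda})\cong E_\lambda$ of Theorem~\ref{thm:linebundles}(2) is a \emph{consequence} of the present theorem; you make its $\breve X_{\mathrm{ad}}$-part the input. Your route avoids redoing the face analysis of \cite{BS25b} for the restricted monoid. The price is importing from \cite{DCS99} the good-filtration description of $H^0(\X,\CL)$ in odd characteristic and the ampleness criterion.

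Three points need repair.

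First, the third input you attribute to \cite{BS25b} is misstated. The open orbit of $\widetilde E$ is $S_{\mathrm{sc}}\times\breve T_{\mathrm{sc}}$, with $\breve T_{\mathrm{sc}}$ acting freely on the second factor. Hence $\widetilde E^0/\breve T_{\mathrm{sc}}$ has open orbit $S_{\mathrm{sc}}$. It is a finite cover of $\X$ whose quotient by $D$ is $\X$, and it is not $\X$ itself unless $D$ is trivial. Your closing alternative has the right picture, and Step~2 does not rely on the misstatement.

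Second, the claim $f^{|D|}\in E_{|D|n\eta}$ fails when $f$ is not $D$-isotypic. The fix: decompose $fe^{n\eta}$ into $D$-character components (the right $D$-action preserves both the filtration and the grading), choose one not vanishing at $x$, and take its $|D|$-th power.

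Third, and most substantively, in the key identification neither containment is ``the definition of $E_\lambda$''. Take $\lambda=N\eta\in\breve X_{\mathrm{ad}}$. Then $\lambda=\sum_{i\in\Ir}n_i\overline{\alpha_i}$ with $n_i\geq0$, by Lemma~\ref{lem:enveloping-order} applied with $\nu=0$. Put $M=H^0(\X,\CO_\X(\sum_in_iD_i))$.
\begin{itemize}
\item The elements of $M$ are rational functions regular on $S_{\mathrm{ad}}$, so $M\subseteq k[S_{\mathrm{sc}}]_{[0]}$ comes for free.
\item By contrast, $M\subseteq k[S_{\mathrm{sc}}]_{\leq\lambda}$ needs an argument. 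All weights of $M$ are $\geq-\lambda$, by the filtration from \cite{DCS99}. The highest-weight argument in the proof of Lemma~\ref{le:flo} then applies: take the minimal $F_N$ containing $M$ and look at its image in $V(\gamma_N)^*$. This shows that any finite-dimensional $G$-submodule of $k[S_{\mathrm{sc}}]$ with this weight bound lies in $k[S_{\mathrm{sc}}]_{\leq\lambda}$.
\end{itemize}
Only after this containment does equality of good-filtration characters give $M=E_\lambda$. Compatibility with multiplication is then automatic, since both are subspaces of $k[S_{\mathrm{ad}}]$ with the multiplication of functions.
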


\begin{proof}
The argument of \cite{BS25b}*{Theorem 4.10} applies with the weight
monoid
\[
\{(\nu,\lambda)\in\breve X_{\mathrm{sc}}^+
\times\breve X_{\mathrm{sc}}\mid\lambda-\nu\in Q_\theta^+\}. \qedhere
\]

\end{proof}

\begin{remark}\label{rem:affine-embedding-constructions}
Although $\widehat S$ need not be a symmetric space for
$G\times\breve T_{\mathrm{sc}}$ in the fixed-point-subgroup sense,
the constructions of affine embeddings, their orbit closures, and
dual canonical bases in \cite{BS25b}*{Sections 3.1--3.3 and 4.2}
apply with the character condition $[\nu]=[\lambda]$.
The essential point is that, within each character class,
Lemma \ref{lem:enveloping-order} identifies the order
$\lambda-\nu\in Q_\theta^+$ with ordinary dominance, which defines
the dual-canonical-basis filtration. One uses the corresponding
character components and the same constructions by closed
saturated submonoids and ideals. We omit the details, since they
repeat the arguments of \cite{BS25b} with this restriction.
\end{remark}

Restriction to the closed orbit embeds
$\operatorname{Pic}(\X)$ in $X$ \cite{DCS99}*{Theorem 4.2(ii)}.
We use the fiber-character convention: each $t\in T$ acts on the one-dimensional fiber
$(\CL_\mu)_{eP}$ by the scalar $\mu(t)$.

\begin{prop}\label{prop:wc}
\begin{enumerate}
\item The lattice $\breve X_{\mathrm{sc}}$ is contained in $\operatorname{Pic}(\X)$.
\item The canonical sheaf is
\[
\omega_\X\cong\CL_{\overline\rho+\sum_{i\in\Ir}\overline\alpha_i},
\qquad \rho=\sum_{i\in\I}\omega_i.
\]
\item The boundary is
\[
\partial\X=\X\setminus S_{\mathrm{ad}}=\bigcup_{i\in\Ir}D_i,
\qquad \CO_\X(D_i)\cong\CL_{-\overline\alpha_i},
\]
where each $D_i$ is a $G$-stable prime divisor.
\item The intersection $\bigcap_{i\in\Ir}D_i$ is the unique closed
$G$-orbit, isomorphic to $G/P$, where $P\supset B^+$ is the standard
parabolic subgroup associated with $\I_\bullet$.
\end{enumerate}
\end{prop}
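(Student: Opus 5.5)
The plan is to read everything off the presentation $\X=\widehat E^0/\breve T_{\mathrm{sc}}$ from Theorem~\ref{thm:enveloping-quotient}, combined with the known structure of the wonderful compactification from De Concini--Springer \cite{DCS99}. The map $\widehat E^0\to\X$ is a $\breve T_{\mathrm{sc}}$-torsor, since stable equals semistable and the stabilizers are trivial. Hence each character $\lambda\in\breve X_{\mathrm{sc}}$ defines a line bundle $\CL_\lambda$ on $\X$ whose sections are the $\breve T_{\mathrm{sc}}$-semi-invariants of weight $\lambda$. For $\lambda=n\eta$ these are exactly $E_{n\eta}e^{n\eta}$. I would check that restriction to the closed orbit $G/P$ sends $\CL_\lambda$ to the homogeneous bundle with fiber character $\lambda$. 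This follows by evaluating the $B^-$-eigenfunction $\chi_\lambda e^\lambda$ at the base point of the closed orbit, using the fiber-character convention and the injectivity $\operatorname{Pic}(\X)\hookrightarrow X$. This gives part (1).

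For part (3), the boundary divisors $D_i$, $i\in\Ir$, correspond to the generators $\overline\alpha_i$ of $Q_\theta^+$ in the weight monoid $\{(\nu,\lambda)\mid \lambda-\nu\in Q_\theta^+\}$. Concretely, consider the element $1\in E_{\overline\alpha_i}$, namely $\chi_0 e^{\overline\alpha_i}$, which is allowed because $0\preceq\overline\alpha_i$ and $[0]=[\overline\alpha_i]$. It is a $G$-invariant section of $\CL_{\overline\alpha_i}$ (or of its dual, depending on the sign convention). Its zero locus is $G$-stable, and it is the prime divisor $D_i$ obtained by degenerating in the direction $\overline\alpha_i$. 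Primality and the fact that these exhaust the boundary come from the orbit--face correspondence of the affine embedding $\widehat E$, as in Remark~\ref{rem:affine-embedding-constructions}. The sign $\CO_\X(D_i)\cong\CL_{-\overline\alpha_i}$ is then fixed by comparing fiber characters on the closed orbit: the torus acts on this invariant section's fiber at $eP$ through $-\overline\alpha_i$ in the chosen convention. Part (4) follows from the same face correspondence, since the intersection of all $D_i$ corresponds to the minimal face. Its stabilizer is computed from the highest-weight line of $V(\eta)^*$ for regular $\eta$, which is stabilized exactly by $P$. Equivalently, one can quote \cite{DCS99} for the adjoint wonderful compactification.

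For the canonical sheaf in part (2), I would use the big cell. Let $\X_0$ be the open set where the $B^-$-eigensection $\chi_\eta$ is nonzero. By the local structure theorem, $\X_0\cong U_P^-\times\BA^{r}$, with coordinates on $\BA^r$ given by the boundary equations of the $D_i$. A top form on $\X_0$ is the product of a $U_P^-$-invariant volume form with $\bigwedge_i dx_i$. Its divisor is supported on the complement of $\X_0$, which consists of the $B^-$-stable colors, together with the poles along the $D_i$ coming from the toric part; the latter contribute the term $\sum_{i\in\Ir}\overline\alpha_i$. The torus weight of this top form equals the sum of the roots of $U_P^-$, up to sign, which is the $2\rho$-contribution. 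After symmetrizing via $\theta$, this becomes $\overline\rho$ on the closed orbit.

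The main obstacle is getting the signs and the normalization $\overline\rho$ versus $2\rho_P$ exactly right under the fiber-character convention. As a cross-check, I would restrict to $G/P$ and use adjunction, $\omega_{G/P}=\omega_\X|_{G/P}\otimes\bigotimes_i\CO(D_i)|_{G/P}$, together with $\omega_{G/P}\cong\CL_{-(\text{sum of roots in }U_P^+)}$. The sum of roots of $U_P^+$ equals $\rho-w_\bullet\rho$, which coincides with $\overline\rho$ by $\theta_X(\rho)=-w_\bullet\rho$. Since $\operatorname{Pic}(\X)\hookrightarrow X$ is injective, this cross-check pins down $\omega_\X$ uniquely.
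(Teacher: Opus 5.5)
The paper gives no independent argument for this proposition. It imports all four parts from \cite{DCS99} (Theorems 3.9 and 4.8 and the proof of Theorem 5.9), with line-bundle indices reversed to match the fiber-character convention. Your route is different, and deriving (1) from the torsor $\widehat E^0\to\X$ is sound; the paper does exactly this later, in Lemma~\ref{lem:Ttorsor} and Theorem~\ref{thm:linebundles}(1). A few points in (1) and (3) need repair:
\begin{itemize}
\item Stable $=$ semistable only gives finite stabilizers. Triviality needs the units $q_\nu$, $\nu\in\breve X_{\mathrm{sc}}$, on $D(q_\eta)$.
\item Elements of $E_\lambda e^\lambda$ are sections of $\CL_{-\lambda}$, not $\CL_\lambda$. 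This is harmless for (1), since the lattice is symmetric.
\item In (3) you cannot read the fiber character at $eP$ off the $G$-invariant section $e^{\overline\alpha_i}$, because it vanishes on the closed orbit. Use the nonvanishing unit $q_{\overline\alpha_i}$ on $D(q_\eta)$ instead.
\item Beyond this, (3) and (4) rest on a face correspondence you only sketch. In practice you are citing \cite{DCS99} there, as the paper does.
\end{itemize}

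The genuine gap is in (2). The adjunction cross-check is the only step that actually determines $\omega_\X$, and it uses $\omega_{G/P}\cong\CL_{-(\text{sum of roots of }U_P^+)}$. That sign is wrong in the fiber-character convention. The cotangent space at $eP$ is $(\mathfrak g/\mathfrak p)^*$, whose $T$-weights are the roots of $U_P^+$, so $\omega_{G/P}\cong\CL_{\overline\rho}$. Already for $\BP^1=\mathrm{SL}_2/B^+$, $\omega=\CO(-2)$ has fiber character $\alpha$ at $eB^+$.

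With your sign and $\CO_\X(D_i)\cong\CL_{-\overline\alpha_i}$, adjunction yields $\omega_\X|_{G/P}\cong\CL_{-\overline\rho+\sum_i\overline\alpha_i}$, which contradicts the claim. In fact (2) and (3) together force $\omega_{G/P}\cong\CL_{\overline\rho}|_{G/P}$, which is how the paper uses it in the proof of Proposition~\ref{prop:dv}. Separately, $\rho-w_\bullet\rho=2\rho_{\I_\bullet}$ is the sum of the positive Levi roots. The correct identity is $\overline\rho=\rho-\theta_X\rho=\rho+w_\bullet\rho=2\rho-2\rho_{\I_\bullet}$.

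Once these signs are fixed, the argument closes:
\begin{itemize}
\item The $D_i$ are smooth with normal crossings and meet in $Y$ \cite{DCS99}*{Theorem 3.9}, so $\omega_Y\cong(\omega_\X\otimes\CO_\X(\sum_iD_i))|_Y$.
\item Part (1) supplies $\CL_{\overline\rho+\sum_i\overline\alpha_i}$ on $\X$.
\item Injectivity of restriction to $Y$ then gives (2).
\end{itemize}

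Your big-cell paragraph cannot replace this. With $\mathrm{vol}_{U_P^-}$ an invariant volume form on $U_P^-$, the form $\mathrm{vol}_{U_P^-}\wedge dx_1\wedge\cdots\wedge dx_r$ is regular and nowhere zero on $\X_0$. Since $\X_0$ meets every $D_i$, the form has no poles along the $D_i$. The form with such poles is $\bigwedge_i dx_i/x_i$, and it computes $\omega_\X(\partial\X)$. The divisor of your form lies on the colors, whose multiplicities and classes you never compute. Finally, no $\theta$-symmetrization is needed, because the roots of $U_P^-$ already sum to $-\overline\rho$.
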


These statements follow from \cite{DCS99}*{Theorems 3.9 and 4.8,
and the proof of Theorem 5.9}, with the line-bundle indices reversed
to match the fiber-character convention.

\subsection{Line bundles and semi-invariant functions}\label{subsec:enveloping-preliminary-results}

\begin{lemma}\label{lem:Ttorsor}
The quotient morphism
\[
\pi:\widehat E^0\longrightarrow\X
\]
is a Zariski-locally trivial $\breve T_{\mathrm{sc}}$-torsor.
More precisely, every $x\in\widehat E^0$ has a $\breve T_{\mathrm{sc}}$-stable affine
neighbourhood of the form
\[
\pi^{-1}(W)\cong W\times \breve T_{\mathrm{sc}}
\]
over an affine open subset $W\subset\X$.
\end{lemma}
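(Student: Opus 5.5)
The plan is to build trivializations from nonvanishing semi-invariant sections. By Theorem~\ref{thm:enveloping-quotient}, the quotient $\pi:\widehat E^0\to\X$ is a geometric quotient by $\breve T_{\mathrm{sc}}$, and stable equals semistable. So every point of $\widehat E^0$ has closed orbit with finite stabilizer. The first step is to show that these stabilizers are in fact trivial. Then $\pi$ is a principal bundle in the fppf/étale sense, by Luna's étale slice theorem or its characteristic-free variant for tori.

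For triviality of stabilizers, I would use $G\times\breve T_{\mathrm{sc}}$-equivariance. Every orbit closure in $\widehat E$ meets the closed orbit structure described by the weight monoid in the proof of Theorem~\ref{thm:enveloping-quotient}. So it suffices to check points of the form $(eP\text{-base point})$ in each $G\times\breve T_{\mathrm{sc}}$-orbit of $\widehat E^0$. There the stabilizer in $\breve T_{\mathrm{sc}}$ is read off from the sublattice generated by the weights $\lambda$ of homogeneous functions not vanishing at the point. For a stable point this sublattice should be all of $\breve X_{\mathrm{sc}}$. The reason is that the functions $\chi_\nu e^\lambda$ with $\lambda-\nu\in Q_\theta^+$ include $\chi_\lambda e^\lambda$ for every $\lambda\in\breve X^+_{\mathrm{sc}}$, and these are nonvanishing on the open $B^-$-orbit of the relevant stratum. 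The group generated by $\breve X^+_{\mathrm{sc}}$ is $\breve X_{\mathrm{sc}}$.

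To obtain Zariski local triviality, I would argue with sections rather than slices. Fix a regular $\eta$ and a point $x\in\widehat E^0$. Choose homogeneous $f\in E_{n\eta}e^{n\eta}$ with $f(x)\neq0$, which exists by semistability. Then $\pi^{-1}(W)=\widehat E_f$ for the affine open $W=\X_f\subset\mathbf X_\eta$. Next, choose homogeneous functions $F_1,\dots,F_r$ that are invertible on a $\breve T_{\mathrm{sc}}$-stable neighbourhood of $x$ and whose degrees form a $\mathbb Z$-basis of $\breve X_{\mathrm{sc}}$. Shrinking $W$ to the locus where they are nonzero, the map $\pi^{-1}(W)\to\breve T_{\mathrm{sc}}$ built from the $F_j$, normalized by powers of $f$ to correct degrees, is $\breve T_{\mathrm{sc}}$-equivariant. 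Combined with $\pi$, it gives the isomorphism $\pi^{-1}(W)\cong W\times\breve T_{\mathrm{sc}}$. Concretely, $k[\pi^{-1}(W)]$ becomes a Laurent polynomial ring over its degree-zero part $k[W]$, because each graded piece is a free rank-one $k[W]$-module generated by a monomial in the $F_j$.

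The main obstacle is producing invertible homogeneous functions near $x$ whose degrees generate all of $\breve X_{\mathrm{sc}}$, rather than a finite-index sublattice. Remark~\ref{rem:adjoint-spherical-lattice} shows that such index discrepancies really occur here. I would reduce to $x$ in the $G$-stable open set $\widehat E^0\cap(\X_0$-preimage$)$, which is the big cell. This is allowed because the $G$-translates of the preimage of the big cell $B^-$-chart of $\X$ cover everything. On that chart I would take $F_\lambda=\chi_\lambda e^\lambda$ for $\lambda\in\breve X^+_{\mathrm{sc}}$, together with the boundary functions $\chi_{\lambda-\overline\alpha_i}e^{\lambda}$ for $i\in\Ir$. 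Their ratios realize the classes $\overline\alpha_i$ and the dominant spherical weights, and these together generate $\breve X_{\mathrm{sc}}$. I would check that they are nonvanishing on the chart using the local structure $U_P^-\times\breve T$ from Section~\ref{subsec:prelim-symmetric-strata} and the description of $D_i$ in Proposition~\ref{prop:wc}.
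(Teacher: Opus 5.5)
Your skeleton matches the paper's: produce homogeneous units of every degree near each point, conclude that stabilizers are trivial, and identify $k[\widehat E]_F$ with $B\otimes k[\breve X_{\mathrm{sc}}]$ over its degree-zero part $B$. The gap is in your last paragraph, which is exactly where you locate the main difficulty.

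\textbf{The boundary functions fail.} The functions $\chi_{\lambda-\overline\alpha_i}e^{\lambda}$ do not supply the classes $\overline\alpha_i$. The $\breve T_{\mathrm{sc}}$-degree is the exponent of $e$, so they have degree $\lambda$, just like $\chi_\lambda e^\lambda$. Their ratio is the $\breve T_{\mathrm{sc}}$-invariant function $\chi_{\overline\alpha_i}$.

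Your nonvanishing check is also false for them. When $\chi_{\lambda-\overline\alpha_i}e^{\lambda}$ is regular, i.e.\ $\lambda-\overline\alpha_i\in\breve X^+_{\mathrm{sc}}$, it factors as $(\chi_{\lambda-\overline\alpha_i}e^{\lambda-\overline\alpha_i})\cdot e^{\overline\alpha_i}$. The factor $e^{\overline\alpha_i}\in E_{\overline\alpha_i}e^{\overline\alpha_i}$ is $G$-invariant. Up to scalar it is therefore the section $f_i$ of $\CL_{-\overline\alpha_i}$ cutting out $D_i$. By Theorem~\ref{thm:linebundles}(3), your function vanishes on $\pi^{-1}(D_i)$. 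Moreover, $D_i$ meets the big cell $\X^-$, since it contains the closed orbit $G/P$ and $G/P\cap\X^-=B^-P/P$.

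\textbf{Nonvanishing of $q_\lambda$ is not justified either.} The local structure $U_P^-\times\breve T_{\mathrm{sc}}\cong B^-K/K$ that you cite describes only the open orbit, where the action is free anyway. It gives no information at the boundary points of the chart, which is the only place nontrivial stabilizers could occur. So even for $q_\lambda=\chi_\lambda e^\lambda$ you never explain why it does not vanish identically on a boundary orbit. The boundary functions show this is a genuine issue, not a formality.

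\textbf{The missing idea.} This is what the paper uses. Discard the boundary functions: as you observed, $\breve X^+_{\mathrm{sc}}$ already generates $\breve X_{\mathrm{sc}}$. Then use multiplicativity $\chi_{\lambda+\mu}=\chi_\lambda\chi_\mu$.
For $\nu\in\breve X_{\mathrm{sc}}$ and $n\gg0$, both $n\eta\pm\nu$ lie in $\breve X^+_{\mathrm{sc}}$. Hence $q_{\pm\nu}=q_{n\eta\pm\nu}/q_\eta^n$ are regular on $D(q_\eta)$. Moreover $q_\nu q_{-\nu}=q_{2n\eta}/q_\eta^{2n}=1$. Equivalently, $q_\lambda q_{n\eta-\lambda}=q_\eta^n$, so each $q_\lambda$ divides a power of $q_\eta$.

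This gives units of every degree on all of $D(q_\eta)$, the chart over the big cell, including its points over $\partial\X$. Trivial stabilizers and the Laurent decomposition of $k[\widehat E]_{q_\eta}$ follow at once, and the translates $D(gq_\eta)$ cover $\widehat E^0$. Your slice-theorem detour and the orbit-by-orbit reduction to base points then become unnecessary. Note also that this chart is only $B^-\times\breve T_{\mathrm{sc}}$-stable, not $G$-stable as you wrote.
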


\begin{proof}
Recall that $q_\eta=\chi_\eta e^\eta\in k[\widehat E]$ and that
$D(gq_\eta)$, for $g\in G(k)$, cover $\widehat E^0$.
For $\lambda\in\breve X_{\mathrm{sc}}^+$, put
$q_\lambda=\chi_\lambda e^\lambda$.
We first prove that the scheme-theoretic stabilizers are trivial.
For any $\nu\in\breve X_{\mathrm{sc}}$, choose $n$ sufficiently
large that $n\eta\pm\nu\in\breve X_{\mathrm{sc}}^+$. On
$D(q_\eta)$, the functions
\[
q_\nu=\frac{q_{n\eta+\nu}}{q_\eta^n},\qquad
q_{-\nu}=\frac{q_{n\eta-\nu}}{q_\eta^n}
\]
are regular and homogeneous of degrees $\nu$ and $-\nu$.
Multiplicativity of the eigenfunctions gives
\[
q_{n\eta+\nu}q_{n\eta-\nu}=q_{2n\eta}=q_\eta^{2n},
\]
so $q_\nu q_{-\nu}=1$. Thus both functions are units on
$D(q_\eta)$.

Let $x$ be a geometric point of $D(q_\eta)$, and let
$N=\operatorname{Stab}_{\breve T_{\mathrm{sc}}}(x)$ be its
scheme-theoretic stabilizer. The homogeneity identity
$q_\nu(xt)=\nu(t)q_\nu(x)$ restricts to
\[
q_\nu(x)\,\nu|_N=q_\nu(x)
\]
as an equality of regular functions on $N$. Since $q_\nu(x)$ is
an invertible scalar, $\nu|_N=1$. This holds for every
$\nu\in\breve X_{\mathrm{sc}}$. In particular, for a basis
$\varepsilon_1,\ldots,\varepsilon_r$ of this lattice, we have
\[
N\subseteq\bigcap_{j=1}^r\ker(\varepsilon_j)=\{1\}
\]
as group schemes, and hence $N$ is trivial. The $G$-action
commutes with the torus action, so the same holds on every
$D(gq_\eta)$. These opens cover $\widehat E^0$.

Let $x\in\widehat E^0$. Triviality of its stabilizer means that the
degrees of homogeneous functions nonzero at $x$ generate $\breve X_{\mathrm{sc}}$.
Choose finitely many such functions $f_1,\ldots,f_m\in k[\widehat E]$.
Since $x$ is semistable, also choose $h\in k[\widehat E]_{n\eta}$ with
$h(x)\ne0$, for some $n>0$, and set $F=hf_1\cdots f_m$.
Then $D(F)$ is $\breve T_{\mathrm{sc}}$-stable, contains $x$, and is contained in
$\widehat E^0$. Each $f_i$ is a unit in $k[\widehat E]_F$.
For a basis $\varepsilon_1,\ldots,\varepsilon_r$ of $\breve X_{\mathrm{sc}}$, choose
Laurent monomials $u_j$ in the $f_i$ of degree $\varepsilon_j$.
Writing $B=(k[\widehat E]_F)_0$, multiplication gives an isomorphism
of graded algebras
\[
B\otimes_k k[\breve X_{\mathrm{sc}}]\xrightarrow{\sim}k[\widehat E]_F,
\qquad b\otimes e^{\sum_jn_j\varepsilon_j}
\longmapsto b\prod_j u_j^{n_j}.
\]
Indeed, dividing a homogeneous element by the unit of its degree
leaves an element of $B$, and uniqueness follows from the grading.

Since $\pi$ is a geometric quotient, the stable open $D(F)$ is
saturated and $W=\pi(D(F))$ is open. Its affine quotient is
$W=\operatorname{Spec}B$. The displayed isomorphism therefore gives
\[
\pi^{-1}(W)=D(F)\cong W\times \breve T_{\mathrm{sc}},
\]
with $\pi$ equal to the first projection.
\end{proof}

\begin{lemma}\label{lem:codim2}
The complement $\widehat E\setminus\widehat E^0$ has codimension
at least two. Consequently, restriction induces an isomorphism
\[
k[\widehat E]\xrightarrow{\sim}
\Gamma(\widehat E^0,\CO_{\widehat E^0}).
\]
\end{lemma}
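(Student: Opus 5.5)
The plan is to reduce the statement to the structure of the unstable locus, described in terms of the grading and the $G$-stable ideals of $k[\widehat E]$, and then to invoke normality. Since $\widehat E$ is normal (it is the $D$-quotient of the normal variety $\widetilde E$), the second assertion follows from the first by the algebraic Hartogs property: a regular function on the complement of a closed subset of codimension $\geq 2$ in a normal variety extends uniquely. Injectivity of restriction is clear because $\widehat E^0$ is a dense open subset of the irreducible variety $\widehat E$. So everything rests on the codimension estimate.

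For the codimension estimate, first describe the unstable locus. By the proof of Lemma~\ref{lem:Ttorsor}, $\widehat E^0=\bigcup_{g}D(gq_\eta)$. Hence $Z=\widehat E\setminus\widehat E^0$ is the common zero locus of the $G$-translates of $q_\eta$, that is, of the $G$-submodule generated by $q_\eta$. Up to radical, this is the zero locus of the ideal generated by $\bigoplus_{n>0}E_{n\eta}e^{n\eta}$, since $q_\eta$ spans the $B^-$-eigenline of the top factor $V(n\eta)^*$. The set $Z$ is closed and stable under $G\times\breve T_{\mathrm{sc}}$. By the orbit structure of affine embeddings (Remark~\ref{rem:affine-embedding-constructions}, i.e.\ \cite{BS25b}*{Sections 3.1--3.3}), it is a finite union of $G\times\breve T_{\mathrm{sc}}$-orbit closures. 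These closures correspond to faces of the weight cone
\[
\mathcal C=\{(\nu,\lambda)\mid \lambda-\nu\in \mathbb{Q}_{\geq0}\text{-span of }\overline\alpha_i\}
\]
of the monoid in Theorem~\ref{thm:enveloping-quotient}. The codimension of such an orbit closure is governed by the face: it equals the codimension in the ambient lattice of the span of the face, plus possibly a parabolic contribution. In particular, divisors correspond to facets.

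It therefore suffices to show that no $G$-stable prime divisor of $\widehat E$ lies in $Z$. For this I would list the $G\times\breve T_{\mathrm{sc}}$-stable prime divisors of $\widehat E$. They are of two types.
\begin{enumerate}
\item The first type comes from facets $\lambda-\nu\in\text{face}$ with the coefficient of $\overline\alpha_i$ equal to $0$. These are the divisors $\widehat D_i$ lying over the boundary divisors $D_i$ of $\X$, for $i\in\Ir$. Their generic points lie in $\widehat E^0$, since they map onto $D_i\subset\X$.
\item The second type comes from facets where $\nu$ lies on a wall $\langle\alpha_i^\vee,\nu\rangle=0$ of the dominant chamber. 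Such a facet contains no point $(n\eta,n\eta)$ with $n>0$ if and only if it does not meet the ray through $(\eta,\eta)$.
\end{enumerate}
The ray through $(\eta,\eta)$ lies in the interior of the $\lambda-\nu$ directions, since $\lambda-\nu=0$ there, but it lies on the face $\lambda=\nu$. So the honest check is that $(\eta,\eta)$ is in the relative interior of a face $F_0$ of codimension $r$, namely the face $\lambda=\nu$. A $G$-orbit closure is unstable precisely when its face avoids the relative interior of $F_0$.

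The main obstacle is a dimension count for the unstable orbits, and I would argue it as follows. An unstable orbit closure corresponds to a face not containing $(\eta,\eta)$. Such a face misses a regular $\nu$ direction, so it is contained in a wall $\langle\alpha_i^\vee,\nu\rangle=0$ intersected with an additional condition. Since $\eta$ is regular, any face not containing $(\eta,\eta)$ is contained in some wall $\{\langle\alpha_i^\vee,\nu\rangle=0\}$ (possibly together with $\tau i$), and it does not meet the face $\lambda=\nu$ in its interior. I would verify that such a face has codimension at least $2$ in $\mathcal C$: it must also fail to satisfy $\lambda=\nu$ generically, which forces a second independent linear condition. I would also check that the corresponding orbit has codimension at least $2$, using the dimension formula for orbits in affine spherical embeddings, i.e.\ the rank of the face plus the dimension of the flag variety $G/P_{\text{face}}$. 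This is precisely what \cite{BS25b}*{Theorem 4.10} checks in the simply connected setting, and I would transport it through the finite quotient $q_D$. Finite surjective maps preserve codimension, and $q_D^{-1}(\widehat E^0)$ is the semistable locus $\widetilde E^0$ of $\widetilde E$. Hence the estimate for $\widehat E$ follows directly from the codimension-two statement for $\widetilde E\setminus\widetilde E^0$ established there, and this reduction is the route I would actually write down.
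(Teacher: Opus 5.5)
Your outer frame agrees with the paper's: bound the codimension on $\widetilde E$, transport it along the finite surjection $q_D$ using $q_D^{-1}(\widehat E^0)=\widetilde E^0$, and conclude by normality of $\widehat E$ and Hartogs. Your reformulation is also correct: the components of the unstable locus are $G\times\breve T_{\mathrm{sc}}$-stable, so it suffices that no stable prime divisor is unstable. The gap is that the codimension-two statement for $\widetilde E\setminus\widetilde E^0$, which is the entire content of the lemma, is never established. You attribute it to \cite{BS25b}*{Theorem 4.10}, but as the paper uses it, that theorem is only the instability criterion. It says that an orbit closure with prime $\mathfrak p_{J_1,J_2}$ is unstable exactly when $J_1\neq\Ir$, i.e.\ when its face misses $(\eta,\eta)$. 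It gives no codimension bound, and the paper has to derive that bound itself.

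Your own sketch does not supply the bound, and it goes wrong at the classification of stable divisors. Facets of the weight cone do not all give $G\times\breve T_{\mathrm{sc}}$-stable primes. In particular, a dominance wall $\langle\alpha_i^\vee,\nu\rangle=0$ with no boundary condition gives none. Indeed, if your ``second type'' existed, its face would miss every $(n\eta,n\eta)$ because $\eta$ is regular. Its ideal would then contain all $G$-translates of $q_\eta$, so it would be an unstable divisor, contradicting the lemma.

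The missing ingredient is essentiality. By \cite{BS25b}*{Proposition 4.9}, each stable prime is $\mathfrak p_{J_1,J_2}$ for an essential pair, and essentiality forces $J_2\neq\Ir$ whenever $J_1\neq\Ir$. One then has
\[
(0)\subsetneq\mathfrak p_{\Ir,J_2}\subsetneq\mathfrak p_{J_1,J_2}.
\]
The first inclusion is witnessed by a boundary direction $(0,\overline\alpha_j)$ with $j\notin J_2$. The second is witnessed by a multiple of $(\overline\omega_i,\overline\omega_i)$ with $i\notin J_1$. This gives height at least two directly, without your face-codimension count or the unverified ``parabolic contribution''. Your sentence that the face ``must also fail to satisfy $\lambda=\nu$ generically'' is a placeholder for exactly this step.
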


\begin{proof}
We first apply the orbit argument to the variety
$\widetilde E$. An irreducible component of
$\widetilde E\setminus\widetilde E^0$ is stable under the connected
group $G\times \breve T_{\mathrm{sc}}$. By \cite{BS25b}*{Proposition 4.9}, its prime
ideal is $\mathfrak p_{J_1,J_2}$ for an essential pair
$(J_1,J_2)$. The instability criterion in
\cite{BS25b}*{Theorem 4.10} gives $J_1\ne\Ir$, and essentiality
forces $J_2\ne\Ir$. The defining faces in
\cite{BS25b}*{Definition 4.7} give strict inclusions
\[
(0)\subsetneq\mathfrak p_{\Ir,J_2}
\subsetneq\mathfrak p_{J_1,J_2}.
\]
The first inclusion is strict because a boundary direction
$(0,\overline\alpha_j)$ with $j\notin J_2$ lies in the first nonzero ideal.
The second is strict because a positive multiple of
$(\overline\omega_i,\overline\omega_i)$ with $i\notin J_1$ lies
in the latter ideal but not the former. Thus the unstable
complement in $\widetilde E$ has codimension at least two.

The finite surjective morphism $q_D:\widetilde E\to\widehat E$
satisfies $q_D^{-1}(\widehat E^0)=\widetilde E^0$. It preserves
dimensions of closed subsets and maps the unstable complement
onto $\widehat E\setminus\widehat E^0$, proving the same
codimension bound for $\widehat E$. The restriction isomorphism
then follows from normality of $\widehat E$.
\end{proof}

\begin{thm}\label{thm:linebundles}
\begin{enumerate}
\item For every $\mu\in \breve X_{\mathrm{sc}}$, the associated bundle
\[
\widehat E^0\times^{\breve T_{\mathrm{sc}}}k_\mu\longrightarrow\X,
\qquad (x,a)t=(xt,\mu(t)^{-1}a),
\]
is isomorphic to $\CL_\mu$.
\item There are multiplication-compatible identifications
\[
H^0(\X,\CL_\mu)
\cong k[\widehat E]^{(-\mu)}
=E_{-\mu}e^{-\mu}
\cong\bigl(k[S_{\mathrm{sc}}]_{\leq-\mu}\bigr)_{[-\mu]}.
\]
Here the superscript $(-\mu)$ denotes the semi-invariant functions
satisfying $F(xt)=\mu(t)^{-1}F(x)$.
\item If $0\ne\phi\in E_\mu$ corresponds to
$s_\phi\in H^0(\X,\CL_{-\mu})$, then
\[
\pi^{-1}\bigl(Z(s_\phi)\bigr)
=Z(\phi e^\mu)\cap\widehat E^0
\]
as closed subschemes. Consequently, the zero scheme, and hence the
divisor, of $s_\phi$ is the scheme-theoretic image of this inverse
image under $\pi$.
\end{enumerate}
\end{thm}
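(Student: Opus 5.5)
Lemma~\ref{lem:Ttorsor} makes $\pi:\widehat E^0\to\X$ a Zariski-locally trivial $\breve T_{\mathrm{sc}}$-torsor, so for each $\mu\in\breve X_{\mathrm{sc}}$ the associated bundle $\CL'_\mu=\widehat E^0\times^{\breve T_{\mathrm{sc}}}k_\mu$ is an invertible sheaf on $\X$. The plan is to compare $\CL'_\mu$ with $\CL_\mu$ through restriction to the closed orbit. Then we read off global sections from Lemma~\ref{lem:codim2}, and deduce the zero-scheme statement from the local trivializations.

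\textbf{Part (1).} The assignment $\mu\mapsto\CL'_\mu$ is a $G$-linearized group homomorphism $\breve X_{\mathrm{sc}}\to\operatorname{Pic}(\X)$, since the $G$-action on $\widehat E^0$ commutes with the torus action. Restriction to the closed orbit $Y\cong G/P$ is injective on $\operatorname{Pic}(\X)$ by \cite{DCS99}*{Theorem 4.2(ii)}. It therefore suffices to compute the $T$-character on the fiber of $\CL'_\mu$ over the base point $eP\in Y$.
\begin{itemize}
\item Choose a point $x_0\in\pi^{-1}(eP)$ fixed by $B^+$ up to the torus. For instance, take a limit point in the closed $G\times\breve T_{\mathrm{sc}}$-orbit of $\widehat E^0$, at which the highest-weight coordinates $\chi_\lambda e^\lambda$ (suitably translated by $\dot w_0$) are nonvanishing.
\item The stabilizer of $x_0$ in $T\times\breve T_{\mathrm{sc}}$ is the graph of a homomorphism $T\to\breve T_{\mathrm{sc}}$.
\item The fiber character is then $\mu$ composed with this homomorphism. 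Matching sign conventions gives $\mu$ itself, in the fiber-character convention fixed before Proposition~\ref{prop:wc}.
\end{itemize}
A cleaner alternative is to compute degrees via sections. If $\mu$ is antidominant, the section $\chi_{-\mu}e^{-\mu}$ of $\CL'_\mu$ is a $B^-$-eigensection nonvanishing at the base point of the open $B^-$-orbit. Its weight identifies $\CL'_\mu$ with $\CL_\mu$ on a generating cone, and additivity extends this to all of $\breve X_{\mathrm{sc}}$. I expect the main obstacle to be this sign and convention bookkeeping, i.e.\ showing the identification is $\CL_\mu$ and not $\CL_{-\mu}$ or $\CL_{w_0\mu}$. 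I would fix it first by checking $\mathrm{SL}_2$.

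\textbf{Part (2).} Sections of the associated bundle over $\X$ correspond to functions $F$ on $\widehat E^0$ with $F(xt)=\mu(t)^{-1}F(x)$. This holds on each trivializing chart $\pi^{-1}(W)\cong W\times\breve T_{\mathrm{sc}}$, and the charts glue. By Lemma~\ref{lem:codim2}, such functions extend uniquely to $\widehat E$, so we get $k[\widehat E]^{(-\mu)}$. By the grading in \eqref{eq:enveloping-algebra}, this is exactly $E_{-\mu}e^{-\mu}$. Lemma~\ref{lem:enveloping-order} rewrites it as $\bigl(k[S_{\mathrm{sc}}]_{\leq-\mu}\bigr)_{[-\mu]}$. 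Compatibility with multiplication is clear, because tensor products of associated bundles correspond to products of semi-invariants.

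\textbf{Part (3).} Work on a chart $\pi^{-1}(W)=D(F)\cong W\times\breve T_{\mathrm{sc}}$ with unit $u$ of degree $\mu$. The section $s_\phi$ is represented on $W$ by the degree-zero function $\phi e^\mu/u\in B$. Its zero ideal in $B$ pulls back to the ideal generated by $\phi e^\mu$ in $B\otimes k[\breve X_{\mathrm{sc}}]$, since $u$ is a unit. This gives the equality of closed subschemes of $\widehat E^0$. Faithful flatness of $\pi$ then says the zero scheme is recovered as the scheme-theoretic image, because the torsor is locally a product.
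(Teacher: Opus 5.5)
Your Parts (2) and (3) follow the paper's argument:
\begin{itemize}
\item semi-invariant functions on the torsor;
\item extension across the codimension-two unstable locus by Lemma~\ref{lem:codim2};
\item the grading of $k[\widehat E]$ together with Lemma~\ref{lem:enveloping-order};
\item a homogeneous unit of degree $\mu$ on a trivializing chart, plus faithful flatness.
\end{itemize}
Part (1) has the paper's architecture: injectivity of restriction to $Y=G/P$, then the $T$-character of the fiber at $eP$. However, the one step with content is left open, and this is a genuine gap.

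In route (a), ``matching sign conventions gives $\mu$'' \emph{is} the computation of $\psi\colon T\to\breve T_{\mathrm{sc}}$. That computation needs a homogeneous $T$-eigenfunction of known weight and degree that is nonzero on $\pi^{-1}(eP)$. Your suggested $\dot w_0$-translates of $\chi_\lambda e^\lambda$ do not qualify: they are $B^+$-eigen, and for $\lambda\neq0$ they vanish at the $B^+$-fixed point $eP$.

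In route (b), non-vanishing at the base point $eK$ of the open $B^-$-orbit of $S$ cannot identify the bundle. Take $\nu,\lambda\in\breve X_{\mathrm{sc}}^+$ with $\lambda-\nu\in Q_\theta^+\setminus\{0\}$, e.g.\ $\lambda=\nu+\overline\alpha_i$ with $i\in\Ir$ and $\nu$ sufficiently dominant. By Lemma~\ref{lem:enveloping-order}, both $\chi_\lambda e^\lambda$ and $\chi_\nu e^\lambda$ lie in $E_\lambda e^\lambda$. So they are $B^-$-eigensections of the same bundle $\CL'_{-\lambda}$, both nonzero over $eK$, yet their weights differ; only the first is nonzero over $eP$. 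If by ``base point'' you meant $eP\in B^-P/P$, then non-vanishing there is exactly what has to be proved.

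The missing input, which the paper supplies, has two parts.
\begin{itemize}
\item $q_\lambda=\chi_\lambda e^\lambda$ is a \emph{unit} on $D(q_\eta)$ for every $\lambda\in\breve X_{\mathrm{sc}}$. This follows from $q_{n\eta+\lambda}q_{n\eta-\lambda}=q_\eta^{2n}$ in the proof of Lemma~\ref{lem:Ttorsor}.
\item $eP\in\pi(D(q_\eta))$. This image is a $B^-$-stable open subset whose $G$-translates cover $\X$, so it meets $Y$. A $B^-$-stable open subset of $Y$ contains the dense orbit $B^-P/P$, which contains $eP$.
\end{itemize}
Hence $q_\lambda$ is a local section of the bundle of index $-\lambda$ that is nonzero at the $T$-fixed point $eP$. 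Its left $T$-weight $-\lambda$ is therefore the fiber character there. With this, Part (1) follows from injectivity of restriction to $G/P$ as you intended. Your restriction to antidominant $\mu$ and the additivity step then become unnecessary.
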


\begin{proof}
Lemma \ref{lem:Ttorsor} makes the associated bundle invertible.
On the chart $D(q_\eta)$, the homogeneous unit $q_\lambda$ gives
a nowhere vanishing section of the bundle with index $-\lambda$.
Its left $B^-$-weight is $-\lambda$, so its value at the $T$-fixed
point $eP$ of the closed orbit has fiber character $-\lambda$.
Injectivity of restriction to $G/P$
\cite{DCS99}*{Theorem 4.2(ii)} identifies this bundle with
$\CL_{-\lambda}$, proving (1).

A section of the associated bundle with index $\mu$ is a regular
function $F$ on $\widehat E^0$ with $F(xt)=\mu(t)^{-1}F(x)$.
Lemma \ref{lem:codim2} identifies these functions with the
degree $-\mu$ component of $k[\widehat E]$.
Lemma \ref{lem:enveloping-order} then gives (2); multiplication is
the multiplication of semi-invariant functions.

For (3), on a trivializing chart $W$, choose a homogeneous unit
$u_\mu$ of degree $\mu$ on $\pi^{-1}(W)$. Write
$\phi e^\mu=(\pi^*a)u_\mu$ for $a\in\CO_\X(W)$.
The section $s_\phi|_W$ is represented by $a$, and $u_\mu$ is
invertible, so the asserted equality holds scheme-theoretically.
The charts cover $\X$, and faithful flatness of $\pi$ identifies
the scheme-theoretic image with $Z(s_\phi)$.
\end{proof}

\begin{remark}
If there are exceptional roots in the sense of \cite{DCS99}*{Section~4}, the lattice $\breve X_{\mathrm{sc}}$ need not be all of
$\operatorname{Pic}(\X)$. Such additional classes require
semi-invariant functions on $G$, rather than functions on $G/K$. 
\end{remark}

\section{Splitting of the wonderful compactification}\label{sec:wonderful-splitting}

\subsection{Open charts and distinguished sections}\label{subsec:wonderful-distinguished-sections}

Retain the simply connected group and the character-restricted construction
of Section~\ref{sec:enveloping-variety}, and assume
$\operatorname{char}k\ne2$.

For $v\le w$ in $W$, write
\[
\mathring{\mathcal{R}}_{v,w}
=(B^-\dot vB^+/B^+)\cap(B^+\dot wB^+/B^+),
\qquad
\mathcal{R}_{v,w}=\overline{\mathring{\mathcal{R}}_{v,w}}
\]
to denote the open and closed Richardson varieties. Write
$\pi_P:G/B^+\to G/P$ for the natural projection. Its images of
Richardson varieties are called projected Richardson varieties;
see \cite{KLS14}*{Section~3}.

Define the order-preserving involution
\[
\kappa(v)=w_0\tau(v^{-1})w_0\qquad(v\in W).
\]
It fixes $w^\bullet$. For a simple reflection we write
$\kappa(s_i)=s_{\kappa(i)}$, so $\kappa(i)=(\tau i)^*$.

\begin{proposition}\label{prop:chart-richardson}
Let $v\le w^\bullet$ and $\breve T_{\mathrm{sc}}=T/T^\theta$.
The chart parametrizations
\[
\begin{aligned}
U_P^-\times\breve T_{\mathrm{sc}}&\xrightarrow{\sim}B^-K/K,
&(x,tT^\theta)&\longmapsto xtK,\\
U_P^+\times\breve T_{\mathrm{sc}}&\xrightarrow{\sim}B^+K/K,
&(x,tT^\theta)&\longmapsto x\dot w^\bullet tK
\end{aligned}
\]
induce the isomorphisms
\[
\begin{aligned}
\mathring S_v\cap(B^-K/K)
&\xrightarrow{\sim}\mathring{\mathcal{R}}_{w_\bullet,vw_0}\times\breve T_{\mathrm{sc}},\\
xtK&\longmapsto (x\dot w_\bullet B^+,tT^\theta),\\[4pt]
\mathring S_v\cap(B^+K/K)
&\xrightarrow{\sim}\mathring{\mathcal{R}}_{\kappa(v),w^\bullet}\times\breve T_{\mathrm{sc}},\\
x\dot w^\bullet tK&\longmapsto (x\dot w^\bullet B^+,tT^\theta).
\end{aligned}
\]

(The corresponding
statements hold on the adjoint symmetric space as well. )
\end{proposition}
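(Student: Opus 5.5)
The plan is to compute $\iota$ on each chart and reduce the condition $\iota(y)\in B^+\dot vB^-\dot w^\bullet$ to a Bruhat-cell membership for the flag of $x$; the $\breve T_{\mathrm{sc}}$-factor will turn out to be irrelevant.

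\emph{First chart.} Take $y=xtK$ with $x\in U_P^-$ and $t\in T$. Then
\[
\iota(y)=xt\theta(t)^{-1}\theta(x)^{-1}.
\]
Since $\theta(U_P^-)=U_P^+$, we have $\theta(x)^{-1}\in U_P^+\subset B^+$. Also $t\theta(t)^{-1}\in T$.

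\emph{Reduction to a Bruhat condition.} We want to decide when $\iota(y)\in B^+\dot vB^-\dot w^\bullet$. Multiplying on the right by $(\dot w^\bullet)^{-1}$ and using $\theta(B^-)=\dot w^\bullet B^-(\dot w^\bullet)^{-1}$, the condition becomes
\[
x\,t\theta(t)^{-1}\theta(x)^{-1}\in B^+\dot v\dot w^\bullet\theta(B^-).
\]
The right-hand side is stable under right multiplication by $\theta(B^-)$. The element $t\theta(t)^{-1}\theta(x)^{-1}$ lies in $B^+$, but that is not contained in $\theta(B^-)$, so this direct approach does not simplify.

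Instead I would work with $\theta(x)^{-1}$ on the right and use the factorization $B^-=U_P^-L^-$, where $L$ is the Levi of $P$ and $\theta(U_P^-)=U_P^+$. Write
\[
x\,t\theta(t)^{-1}\theta(x)^{-1}=x\cdot\theta\bigl(t^{-1}\theta(t)\,x^{-1}\bigr).
\]
Here $t^{-1}\theta(t)x^{-1}\in TU_P^-\subset B^-$. Hence $\iota(y)\in x\,\theta(B^-)$.

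\emph{Translation to flags.} Using $\theta(B^-)=\dot w^\bullet B^-(\dot w^\bullet)^{-1}$, the condition becomes $x\dot w^\bullet\in B^+\dot v\dot w^\bullet B^-$. Since $w^\bullet=w_0w_\bullet$ and $B^-=\dot w_0B^+\dot w_0^{-1}$, this is $x\dot w_\bullet\in B^+\dot v\dot w_0B^+$. (Note that $\dot w^\bullet\dot w_0^{-1}$ equals $\dot w_\bullet$ up to $T$, using $w_0w_\bullet=w_\bullet w_0$.) In flag terms this says
\[
x\dot w_\bullet B^+\in B^+\dot v w_0B^+/B^+.
\]
Moreover $x\dot w_\bullet B^+$ ranges over $U_P^-\dot w_\bullet B^+/B^+$, which is $B^-\dot w_\bullet B^+/B^+$. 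Indeed $L^-\cap B^-$ fixes $\dot w_\bullet B^+$ up to $U_P^-$, and the map $U_P^-\to B^-\dot w_\bullet B^+/B^+$ is an isomorphism because $\dim U_P^-=\ell(w_0)-\ell(w_\bullet)=\operatorname{codim}$ of the cell. This gives the first isomorphism: the $t$-coordinate is unconstrained, and $x$ varies in $\mathring{\mathcal R}_{w_\bullet,vw_0}$.

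\emph{Second chart.} Take $y=x\dot w^\bullet tK$ with $x\in U_P^+$, and set $n=\dot w^\bullet$. Then
\[
\iota(y)=x\,n t\theta(t)^{-1}\theta(n)^{-1}\theta(x)^{-1},
\]
with $\theta(x)^{-1}\in U_P^-$. As before, $t\theta(t)^{-1}\theta(x)^{-1}$ can be absorbed into the right coset by $\theta(B^-)$: it equals $\theta$ of an element of $B^-$ after conjugating by $\theta(n)=n$ up to $T$. The condition then becomes membership of $x n$ in a double coset $B^+\dot v' B^+$. Here one must compute that $v'$ equals $\kappa(v)$ as a reflection of the twisting: the inverse comes from swapping sides, $\tau$ comes from $\theta$ acting on $W$ as $w\mapsto\tau$ conjugated by $w_\bullet$, and $w_0$-conjugation comes from passing between $B^-$ and $B^+$. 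The image $xnB^+$ ranges over $B^+w^\bullet B^+/B^+\cap B^-\ldots$, i.e. over $U_P^+\dot w^\bullet B^+/B^+$, which is the Schubert cell $B^+w^\bullet B^+/B^+$. The $B^-$-condition then defines the Richardson variety $\mathring{\mathcal R}_{\kappa(v),w^\bullet}$.

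\emph{Main obstacle and adjoint case.} The main obstacle is this second computation, namely tracking the Weyl group element precisely, including the $T$-ambiguity of representatives and the identity $\theta(\dot u)\in\dot w^\bullet\,\tau(u)\ldots$. I would first verify it for simple $v=s_i$ via the identity $\theta_X(\alpha_i)=-w_\bullet\alpha_{\tau i}$, and then use multiplicativity of length-additive products. For the adjoint statement, I would pass to the quotient by the finite group $D$, which acts only on the $\breve T$ factor.
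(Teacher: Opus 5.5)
\textbf{First chart.} This is essentially the paper's argument and is correct in substance, but two lines need fixing. First, your remark that $t\theta(t)^{-1}\theta(x)^{-1}$ is not in $\theta(B^-)$ is false. It lies in $TU_P^+$, and $U_P^+=\theta(U_P^-)\subset\theta(B^-)$; your rewriting $x\cdot\theta(t^{-1}\theta(t)x^{-1})$ shows exactly this. Second, since $(\dot w^\bullet)^2\in T$, the intermediate condition is $x\dot w^\bullet\in B^+\dot vB^-$, not $B^+\dot v\dot w^\bullet B^-$. Your final condition $x\dot w_\bullet\in B^+\dot vw_0B^+$ is right.

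\textbf{Second chart: the absorption step fails.} The claim ``as before, $t\theta(t)^{-1}\theta(x)^{-1}$ can be absorbed into the right coset by $\theta(B^-)$'' is false. For $x\in U_P^+$ one has $\theta(x)^{-1}\in U_P^-=\theta(U_P^+)\subset\theta(U^+)$, and $\theta(U^+)\cap\theta(B^-)=\{e\}$. Equivalently, with $g=x\dot w^\bullet t$ one has $g^{-1}\notin B^-$, so $\iota(y)\notin x\dot w^\bullet\,\theta(B^-)$ unless $w^\bullet=e$. If you absorbed anyway, the condition would become $x\in B^+\dot vB^-$, which forces $v=e$ for every $x$; that is false. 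Also, a condition ``$x\dot w^\bullet\in B^+\dot v'B^+$'' is vacuous because $x\in B^+$. So the proposal never produces the $B^-$-condition that cuts out $\mathring{\mathcal R}_{\kappa(v),w^\bullet}$, and $\kappa(v)$ remains a heuristic.

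\textbf{The missing idea: absorb on the left.} Since $x\cdot\dot w^\bullet t\theta(t)^{-1}\theta(\dot w^\bullet)^{-1}\in U_P^+T\subset B^+$, the condition $y\in\mathring S_v$ is equivalent to $\theta(x)^{-1}\in B^+\dot vB^-\dot w^\bullet$.
\begin{itemize}
\item Inverting gives $\dot w^\bullet\theta(x)\in B^-\dot v^{-1}B^+$.
\item Applying $\theta$, using $\theta(B^\pm)=\dot w^\bullet B^\pm(\dot w^\bullet)^{-1}$ and $\theta(\dot w^\bullet)\in\dot w^\bullet T$, turns this into $x\dot w^\bullet\in B^-\,(\dot w^\bullet)^{-1}\theta(\dot v^{-1})\dot w^\bullet\,B^+$.
\item It remains to show $(w^\bullet)^{-1}\theta(v^{-1})w^\bullet=w_0\tau(v^{-1})w_0=\kappa(v)$. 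This follows from $\theta(w)=w_\bullet\tau(w)w_\bullet$ together with $w_0w_\bullet=w_\bullet w_0$. Check $\theta(w)=w_\bullet\tau(w)w_\bullet$ on each $s_i$ via $\theta_X(\alpha_i)=-w_\bullet\alpha_{\tau i}$, then extend because both sides are automorphisms of $W$. No length-additivity argument is needed.
\end{itemize}

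Your identification of the image $U_P^+\dot w^\bullet B^+/B^+$ with $B^+\dot w^\bullet B^+/B^+$ is fine, as is your remark on the adjoint case.
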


\begin{proof}
The $U_P^-\times\breve T_{\mathrm{sc}}$ chart
parametrization is the local structure theorem
\cite{DCS99}*{Proposition~3.8}. Its $\dot w^\bullet$-translate
has unipotent coordinate $U_P^+$ and is the open $B^+$-orbit
$B^+K/K$, giving the other parametrization.

For $x\in U_P^-$, one has $\theta(x)^{-1}\in U_P^+
\subset\theta(B^-)$. Therefore
\begin{align*}
xtK\in\mathring S_v
&\Longleftrightarrow
xt\theta(t)^{-1}\theta(x)^{-1}
\in B^+\dot v\dot w^\bullet\theta(B^-)\\
&\Longleftrightarrow x\in B^+\dot v\dot w^\bullet\theta(B^-)\\
&\Longleftrightarrow x\dot w_\bullet\in B^+\dot v\dot w_0B^+.
\end{align*}
Here the torus factors are absorbed into $\theta(B^-)$.
The map $x\mapsto x\dot w_\bullet B^+$ identifies $U_P^-$
with $B^-\dot w_\bullet B^+/B^+$, proving the first restriction.

For $x\in U_P^+$, the identity $\theta(w^\bullet)=w^\bullet$
gives
\[
\iota(x\dot w^\bullet tK)=x\,a(t)\,\theta(x)^{-1},
\qquad a(t)=\dot w^\bullet t\theta(t)^{-1}
\theta(\dot w^\bullet)^{-1}\in T.
\]
Absorbing the left $B^+$-factors, and then applying inversion
and $\theta$, yields
\begin{align*}
x\dot w^\bullet tK\in\mathring S_v
&\Longleftrightarrow
\theta(x)^{-1}(\dot w^\bullet)^{-1}\in B^+\dot vB^-\\
&\Longleftrightarrow
\dot w^\bullet\theta(x)\in B^-\dot v^{-1}B^+\\
&\Longleftrightarrow
x\dot w^\bullet\in B^-\dot{\kappa(v)}B^+.
\end{align*}
For the last equivalence, conjugate the $\theta$-image by
$\dot w^\bullet$ and use
\[
(w^\bullet)^{-1}\theta(v^{-1})w^\bullet
=w_0\tau(v^{-1})w_0=\kappa(v).
\]
Factors in $T$ arising from the chosen representatives do not
affect these double cosets. We conclude that  $x\dot w^\bullet B^+ \in \mathring{\mathcal{R}}_{\kappa(v),w^\bullet}$. 
\end{proof}

\begin{lemma}\label{le:flo}
For $\mu\in X^+$, one has
$\iota^*(k[G]_{\le\mu})\subseteq k[S_{\mathrm{sc}}]_{\le\overline\mu}$.
\end{lemma}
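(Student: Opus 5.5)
The plan is to reduce the statement to a comparison of highest weights in good filtrations. Recall from \cite{BS24} that $k[G]_{\le\mu}$ is the $G\times G$-submodule of $k[G]$ spanned by matrix coefficients $c_{f,v}(g)=f(gv)$ with $v\in V(\lambda)$, $f\in V(\lambda)^*$ and $\lambda\le\mu$ dominant. Likewise $k[S_{\mathrm{sc}}]_{\le\nu}$ is the largest $G$-submodule of $k[S_{\mathrm{sc}}]$ whose good-filtration factors are $V(\lambda')^*$ with $\lambda'\le\nu$. Since $\iota$ is $G$-equivariant, with $G$ acting on $G$ by twisted conjugation, $\iota^*$ is a $G$-module map. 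It therefore suffices to show two things. First, every $B^-$-eigenfunction in $\iota^*(k[G]_{\le\mu})$ has weight $-\lambda'$ with $\lambda'\le\overline\mu$. Second, this weight bound forces membership in $k[S_{\mathrm{sc}}]_{\le\overline\mu}$.

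\textbf{Step 1: computing $\iota^*$ on matrix coefficients.} We have $\iota^*c_{f,v}(gK)=f(g\theta(g)^{-1}v)$. We identify $g\mapsto g\theta(g)^{-1}v$ with the orbit map for the $G$-action on $V(\lambda)\otimes V(\lambda)^\theta$. Here $V(\lambda)^\theta$ is the $\theta$-twist of $V(\lambda)^*$, which has lowest weight $-\theta_X(\lambda)$ up to the $w_0$ conventions. Hence $\iota^*c_{f,v}$ is a matrix coefficient of a submodule of $V(\lambda)\otimes V(\lambda)^{*\theta}$. All $T$-weights of this module are $\le\lambda-\theta_X(\lambda)=\overline\lambda$ in the dominance order, using $\theta_X$ applied to weights $\ge w_0\lambda$ together with the Satake relation $\theta_X(\alpha_i)=-w_\bullet\alpha_{\tau i}$ to control signs. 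Finally, $\lambda\le\mu$ implies $\overline\lambda\le\overline\mu$. This holds because $\overline{\alpha_i}=\alpha_i+w_\bullet\alpha_{\tau i}$ is a nonnegative combination of simple roots: $w_\bullet$ sends $\alpha_{\tau i}$ for $i\in\I_\circ$ to a positive root, and for $i\in\I_\bullet$ we get $\overline{\alpha_i}=0$.

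\textbf{Step 2: from the weight bound to filtration membership.} The image $M=\iota^*(k[G]_{\le\mu})$ is a finite-dimensional $G$-submodule of $k[S_{\mathrm{sc}}]$. Every highest weight of a composition factor of $M$, and in particular every $B^-$-eigenvector weight, is $\le\overline\mu$ by Step 1. I would then invoke the characterization of the filtration on $k[S_{\mathrm{sc}}]$ from \cite{BS24}*{Section 4.4}. There, $k[S_{\mathrm{sc}}]_{\le\nu}$ is exactly the sum of finite-dimensional submodules whose weights (equivalently the socle weights, via $B^-$-eigenfunctions $\chi_{\lambda'}$) are bounded by $\nu$; this is the standard fact that a good-filtration piece is the maximal submodule with weights in the saturated set $\{\lambda'\le\nu\}$. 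Alternatively, since $k[S_{\mathrm{sc}}]_{\le\nu}=k[G]_{\le\nu}\cap k[S_{\mathrm{sc}}]$, it is enough to check $\iota^*c_{f,v}\in k[G]_{\le\overline\mu}$ as a function on $G$ via $\tilde\iota$. That reduces to the classical statement that products of matrix coefficients of $V(\lambda)$ and $V(\lambda)^{*\theta}$ lie in $k[G]_{\le\lambda+\lambda_2}$, where $\lambda_2$ is the highest weight of the twist.

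\textbf{Main obstacle.} The delicate point is the bookkeeping of conventions: which of $\lambda$ or $-w_0\lambda$ indexes the left versus right factor in $k[G]_{\le\mu}$, and the identification of the highest weight of $V(\lambda)^{*\theta}$. I expect that highest weight to be $-\theta_X(w_0\lambda)$ or a related variant, and the argument needs it to collapse to $\overline\lambda$ after accounting for the $w_\bullet$ twist in $\theta_X$. I would first test the computation in the group case $(G\times G)/\Delta G$, where the claim becomes the known multiplicativity $k[G]_{\le\lambda}\cdot k[G]_{\le\lambda'}\subseteq k[G]_{\le\lambda+\lambda'}$. Then I would transport the conventions to the general $\theta$.
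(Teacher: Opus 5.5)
\textbf{There is a genuine gap in Step~1.} The claim fails at the point where the lemma has real content.

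The $\theta$-twist of a Weyl module has a \emph{dominant} highest weight. In your setup the twist of $V(\lambda)^*$ has highest weight $\tau\lambda$, not $-\theta_X(\lambda)=w_\bullet\tau\lambda$. Indeed $\theta_X$ sends white simple roots to negative roots but fixes black simple roots, so no ``sign control'' makes it order-reversing. Hence the sharp weight bound for the module through which $\iota^*c_{f,v}$ factors is $\lambda+\tau\lambda$. This strictly exceeds $\overline\lambda=\lambda+w_\bullet\tau\lambda$ as soon as $\tau\lambda$ pairs nontrivially with a black coroot.

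Concrete example: take $SL_4/Sp_4$, so $\I_\bullet=\{1,3\}$ and $\tau=\mathrm{id}$, with $\theta(g)=J(g^{t})^{-1}J^{-1}$ and $\mu=\omega_1$. Then $\overline{\omega_1}=\omega_1+s_1\omega_1=\omega_2$. But the relevant module is the tensor square of the standard representation. It contains the symmetric square, of highest weight $2\omega_1=\omega_2+\alpha_1\not\le\omega_2$. Those coefficients die under $\iota^*$ only because $g\theta(g)^{-1}=gJg^{t}J^{-1}$ and $gJg^{t}$ is skew. That is $K$-invariance, not a weight bound on the ambient module.

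Your alternative route has the same defect. Products of coefficients only land in $k[G]_{\le\lambda+\tau\lambda}$, and the hoped-for collapse to $\overline\lambda$ does not happen. Testing in the group case will not expose this, since there $\I_\bullet=\emptyset$ and $\overline\mu=\mu+\tau\mu$.

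\textbf{What you actually get, and what is missing.} Your argument does yield $M\subseteq k[S_{\mathrm{sc}}]_{\le\mu+\tau\mu}$. Your Step~2 is fine and is essentially the paper's extraction via a total order and a minimal $N$. The paper obtains the equivalent bound that all $T$-weights of $M$ are $\ge-\mu-\tau\mu$.

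The missing idea is that the factors of $k[S_{\mathrm{sc}}]$ are $V(\gamma)^*$ with $\gamma$ \emph{spherical}. One must prove: if $\gamma\in\breve X_{\mathrm{sc}}$ and $\gamma\le\mu+\tau\mu$, then $\gamma\le\overline\mu$. The paper argues as follows.
\begin{enumerate}
\item Set $\delta=\overline\mu-\gamma$. Then $\delta\in\breve X_{\mathrm{sc}}\cap Q$.
\item Since $\overline\mu-\mu-\tau\mu$ lies in the black root lattice, the white coefficients of $\delta$ equal those of $\mu+\tau\mu-\gamma$, hence are nonnegative.
\item Write $\delta=\delta_\circ+\delta_\bullet$. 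Using $\theta\delta=-\delta$ and comparing black coefficients gives $2\delta_\bullet=w_\bullet\tau\delta_\circ-\tau\delta_\circ$.
\item This right-hand side is a nonnegative combination of black roots, so all coefficients of $\delta$ are nonnegative, i.e.\ $\gamma\le\overline\mu$.
\end{enumerate}

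\textbf{A separate, smaller issue.} You describe $k[G]_{\le\mu}$ as spanned by coefficients of the Weyl modules $V(\lambda)$ with $\lambda\le\mu$. That is correct only in characteristic zero. In characteristic $p$ that span is strictly smaller than the good-filtration piece whenever some such $V(\lambda)$ is reducible. Argue instead with $T\times T$-weights, as the paper does, or with coefficients of arbitrary modules whose composition factors have highest weights $\le\mu$.
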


\begin{proof}
For $f\in k[G]$, write $\wt(f)=(\lambda,\nu)\in X\times X$ if
$f(t_1^{-1}gt_2)=\lambda(t_1)\nu(t_2)f(g)$ for $t_1,t_2\in T$.
For $f\in k[S_{\mathrm{sc}}]$, write $\wt_\theta(f)=\lambda$ if
$f(t^{-1}gK)=\lambda(t)f(gK)$ for $t\in T$.
Then
\[
\wt_\theta(\iota^*f)=\lambda+\theta\nu
\qquad\text{when }\wt(f)=(\lambda,\nu)
\]
whenever the pullback is nonzero.

Recall that $k[G]_{\le\mu}$ has a good $G\times G$-filtration
with sections $V(\gamma)^*\boxtimes V(-w_0\gamma)^*$ for
$\gamma\in X^+$, $\gamma\le\mu$. If $f\in k[G]_{\le\mu}$
has weight $(\lambda,\nu)$, then
\[
-\mu\le x\lambda,\qquad x\nu\le\mu\qquad(x\in W).
\]
Consequently,
\[
\wt_\theta(\iota^*f)=\lambda-w_\bullet\tau\nu
\ge-\mu-\tau\mu.
\]
Thus every weight of $M:=\iota^*(k[G]_{\le\mu})$ is at least
$-\mu-\tau\mu$.

We first prove the following assertion.

\emph{(a) Let $\lambda\in\breve X_{\mathrm{sc}}$, and write
$\lambda=\sum_{i\in\I}n_i\alpha_i$, with $n_i\in\mathbb Q$.
If $n_i\ge0$ for $i\in\I_\circ$, then $n_i\ge0$ for every
$i\in\I$.}

Write $\lambda=\lambda_\circ+\lambda_\bullet$, where
\[
\lambda_\circ\in\mathbb Q_{\ge0}[\alpha_i\mid i\in\I_\circ],
\qquad
\lambda_\bullet\in\mathbb Q[\alpha_i\mid i\in\I_\bullet].
\]
Set
\[
\lambda_\circ'=w_\bullet\tau\lambda_\circ-\tau\lambda_\circ
\in\mathbb Q_{\ge0}[\alpha_i\mid i\in\I_\bullet].
\]
Since $\theta$ fixes the black roots,
$\theta\lambda=-\tau\lambda_\circ-\lambda_\circ'+\lambda_\bullet$.
Comparing black-root coefficients in $\theta\lambda=-\lambda$
gives $2\lambda_\bullet=\lambda_\circ'$. This proves~(a).

We next prove:

\emph{(b) If $\lambda\in\breve X_{\mathrm{sc}}$ and
$\lambda\le\mu+\tau\mu$, then $\lambda\le\overline\mu$.}

Put $\delta=\overline\mu-\lambda\in\breve X_{\mathrm{sc}}$.
Since
\[
\overline\mu-(\mu+\tau\mu)
=w_\bullet\tau\mu-\tau\mu
\in\mathbb Z[\alpha_i\mid i\in\I_\bullet],
\]
one has $\delta\in Q$, and the white-root coefficients of
$\delta$ equal those of $\mu+\tau\mu-\lambda$, hence are
nonnegative. By~(a), all simple-root coefficients of $\delta$
are nonnegative. They are integers because $\delta\in Q$.
Thus $\lambda\le\overline\mu$, proving~(b).

Return to $M$. Choose a total order
$\gamma_1,\gamma_2,\ldots$ on $\breve X_{\mathrm{sc}}^+$
refining dominance, with the finite downward-closed set
\[
\{\gamma\in\breve X_{\mathrm{sc}}^+\mid\gamma\le\overline\mu\}
\]
first. Put
$F_N=\sum_{k=1}^N k[S_{\mathrm{sc}}]_{\le\gamma_k}$.
By \cite{BS24}*{Theorem~4.9}, with the right-quotient convention,
$F_N/F_{N-1}\cong V(\gamma_N)^*$.
If $M=0$ there is nothing to prove. Otherwise, let $N$ be minimal
such that $M\subseteq F_N$, and consider its image $M_N$ under
\[
F_N\longrightarrow F_N/F_{N-1}\cong V(\gamma_N)^*.
\]
The subspace $M$ is $G$-stable for the twisted diagonal action,
so $M_N$ is a nonzero $G$-submodule. It contains a vector of weight
$-\gamma_N$: its dual is a nonzero quotient of $V(\gamma_N)$,
in which the highest weight generator must survive.
The preliminary weight bound gives $\gamma_N\le\mu+\tau\mu$.
Applying~(b) to the spherical label $\gamma_N$ yields
$\gamma_N\le\overline\mu$. By the chosen order, every
$\gamma_k$ with $k\le N$ also satisfies this inequality.
Therefore
\[
M\subseteq F_N\subseteq k[S_{\mathrm{sc}}]_{\le\overline\mu},
\]
as required.
\end{proof}

For $i\in\I$, define
\begin{equation}\label{eq:phi}
\varphi_i(gK)=v_{\omega_i}^*\!
\left(g\theta(g)^{-1}v_{w^\bullet\omega_i}\right)
=\iota^*(\Delta_{\omega_i,w^\bullet\omega_i})(gK).
\end{equation}
Lemma~\ref{le:flo} gives the filtration bound for $\varphi_i$.
For $h\in H$, the central element $z_h=h\theta(h)^{-1}$ satisfies
\[
\varphi_i(ghK)=\omega_i(z_h)\varphi_i(gK).
\]
By \eqref{eq:enveloping-D}, this is the character $[\overline{\omega_i}]$ of
$D$, as an identity of group-scheme actions. Consequently,
\[
\varphi_i\in E_{\overline{\omega_i}}
=\bigl(k[S_{\mathrm{sc}}]_{\le\overline{\omega_i}}\bigr)_{[\overline{\omega_i}]}
\cong H^0(\X,\CL_{-\overline{\omega_i}}),
\]
where the last identification is Theorem~\ref{thm:linebundles}.
We use the same symbol for the resulting section.
The support of its zero scheme on $S_{\mathrm{sc}}$ is
$S_{s_{\kappa(i)}}$, where $\kappa(i)=(\tau i)^*$.
This support descends to $S_{\mathrm{ad}}$;
we use the same stratum notation there and write $\X_u$ for its
closure in $\X$. Reducedness will follow in
Corollary~\ref{cor:si}.

\subsection{Restriction to the closed orbit}\label{subsec:wonderful-closed-orbit}

Write $Y=G/P$. We use
$\operatorname{Pic}(Y)=\{\mu\in X\mid
\langle\alpha_j^\vee,\mu\rangle=0\ (j\in \I_{\bullet})\}$ with the
fiber-character convention of Section~\ref{sec:enveloping-variety}.
For $i\in\I$, let $\overline{\varphi_i}$ be the restriction of
$\varphi_i$ to $Y$.

\begin{prop}\label{prop:dv}
(1) For $i\in\I$, the element $\overline{\varphi_i}\in H^0(G/P,\CL_{-\overline{\omega_i}})$ is nonzero.

    (2) For $i\in\I_\circ$, one has $Z(\overline{\varphi_i})=\overline{B^-s_iP/P}\cup \overline{B^+w_0s_{\tau i}P/P} $.

    (3) For $i\in \I_\bullet$, one has $Z(\overline{\varphi_i})=\pi_P(\overline{B^-s_iB^+/B^+}\cap \overline{B^+w^\bullet B^+/B^+})$. Here $\pi_P:G/B^+\rightarrow G/P$ is the canonical projection.
\end{prop}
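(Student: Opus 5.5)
We first need an explicit description of $\overline{\varphi_i}$ as a section on $Y=G/P$. Let $y_0\in Y$ be the $T$-fixed base point. Since $Y$ is the unique closed orbit, restriction $H^0(\X,\CL_{-\overline{\omega_i}})\to H^0(G/P,\CL_{-\overline{\omega_i}})$ is $G$-equivariant. By Theorem~\ref{thm:linebundles} and Lemma~\ref{lem:enveloping-order}, the source has a good filtration with top factor $V(\overline{\omega_i})^*$. The standard De Concini--Springer description says restriction to the closed orbit kills exactly the lower factors $V(\nu)^*$ with $\nu<\overline{\omega_i}$. It therefore factors through the projection onto $V(\overline{\omega_i})^*\cong H^0(G/P,\CL_{-\overline{\omega_i}})$.

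The plan is to identify the image of $\varphi_i$ with a matrix coefficient on $G/P$. Expanding $\iota^*$ of a matrix coefficient, one has
\[
\varphi_i(gK)=\langle v_{\omega_i}^*, g\theta(g)^{-1}v_{w^\bullet\omega_i}\rangle .
\]
Its leading part in the Peter--Weyl filtration on $S_{\mathrm{sc}}$ should come from the tensor product $V(\omega_i)^*\otimes V(-\theta\omega_i)^*\to V(\overline{\omega_i})^*$ projected to the Cartan component. Under the identification $H^0(G/P,\CL_{-\overline{\omega_i}})=V(\overline{\omega_i})^*$, the restriction $\overline{\varphi_i}$ should then be the product section
\[
\overline{\varphi_i}=p_i\cdot q_i,\qquad p_i\in H^0(G/B^+,\CL_{-\omega_i}),\quad q_i\in H^0(G/B^+,\CL_{\theta\omega_i}),
\]
where $p_i$ is the extremal weight function $v_{\omega_i}^*$ (lowest $B^-$-weight) and $q_i$ is the extremal function attached to $v_{w^\bullet\omega_i}$, twisted by $\theta$ and $\dot w^\bullet$. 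One checks that the product descends from $G/B^+$ to $G/P$. To make this rigorous, restrict to the chart: via Proposition~\ref{prop:chart-richardson}, the big cell $U_P^-\times\breve T$ of $S$ degenerates to the big cell $U_P^-y_0$ of $Y$ as the torus coordinate tends to the closed-orbit limit. The leading term of $\varphi_i(xt)$ in $t$ is
\[
t^{\overline{\omega_i}}\,\Delta_{\omega_i,\omega_i}(x)\,\Delta_{\theta\text{-twisted}}(\theta(x)^{-1}),
\]
which is exactly $p_iq_i$ on $U_P^-$. Part (1) follows at once, because at $x=e$ both factors equal $1$, using $\theta(w^\bullet)=w^\bullet$ to normalize the extremal vectors.

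For the zero loci, compute them on $G/B^+$ and push forward. The zero set of $p_i$ pulled back to $G/B^+$ is the opposite Schubert divisor $\overline{B^-s_iB^+/B^+}$. The zero set of the twisted factor $q_i$ is a Schubert divisor $\overline{B^+w_0s_{j}B^+/B^+}$ for the appropriate $j$. Via $\theta(B^\pm)=\dot w^\bullet B^\pm(\dot w^\bullet)^{-1}$ and $\theta_X(\alpha_i)=-w_\bullet\alpha_{\tau i}$, we get $j=\tau i$ in the relevant sense.

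\emph{Case $i\in\I_\circ$.} Both $\omega_i$ and $w_\bullet\omega_{\tau i}$ are $P$-invariant directions, so $p_i$ and $q_i$ individually descend to $G/P$. Their zero divisors are the $P$-saturated divisors $\overline{B^-s_iP/P}$ and $\overline{B^+w_0s_{\tau i}P/P}$, which gives (2).

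\emph{Case $i\in\I_\bullet$.} Here $\omega_i$ is not trivial on $\I_\bullet$, so neither factor descends on its own; only the product is a section on $G/P$. The strategy is to use the identity
\[
Z(\overline{\varphi_i})=\pi_P\bigl(Z(\pi_P^*\overline{\varphi_i})\bigr).
\]
The aim is to show that the zero locus of $p_iq_i$ on $G/B^+$, intersected with the fibres of $\pi_P$, gives precisely the projected Richardson variety $\pi_P(\mathcal R_{s_i,w^\bullet})$. Concretely, a point $gP$ is a zero iff $p_iq_i$ vanishes on the whole fibre $gP/B^+\cong P/B^+$. Using that $q_i$ restricted to fibres is a lowest-weight function for the Levi $L_{\I_\bullet}$ twisted by $w_\bullet$, this condition is equivalent to the existence of $b\in P$ with $gbB^+\in\overline{B^-s_iB^+}\cap\overline{B^+w^\bullet B^+}$. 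This fibrewise analysis of the $\I_\bullet$ case is the main obstacle. I would try to handle it via Proposition~\ref{prop:chart-richardson}, which already identifies $\mathring S_{s_{\kappa(i)}}$ on the chart with an open Richardson variety, combined with a dimension count: both sides are irreducible divisors in $Y$, so containment plus irreducibility suffices.
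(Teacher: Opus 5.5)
Your proposal has genuine gaps, and two of its concrete steps fail.

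\emph{Part (1).} Because $v^*_{\omega_i}$ is $U^-$-invariant, on the chart $U_P^-\times\breve T_{\mathrm{sc}}$ one has exactly $\varphi_i(xtK)=\overline{\omega_i}(t)\,v^*_{\omega_i}\bigl(\theta(x)^{-1}v_{w^\bullet\omega_i}\bigr)$. At $x=e$ this is $v^*_{\omega_i}(v_{w^\bullet\omega_i})$. That is a pairing of different weights whenever $w^\bullet\omega_i\neq\omega_i$ (e.g.\ for every $i\in\I_\circ$, where $w^\bullet\omega_i=w_0\omega_i=-\omega_{i^*}$), so it is zero. For $i\in\I_\circ$ the statement itself forces this: $eP$ is the $B^+$-fixed point, so it lies on the $B^+$-stable divisor $\overline{B^+w_0s_{\tau i}P/P}$. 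So ``both factors equal $1$ at $x=e$'' is false, and nonvanishing must be tested at a generic point. The paper works on $\X^+=\dot w^\bullet\X^-$. Dividing by the nowhere-vanishing section $\dot w^\bullet\chi_{\overline{\omega_i}}$ gives $c_i\Delta_{\omega_i,w^\bullet\omega_i}(x)$ for $x\in U_P^+$. This is independent of the torus coordinate, hence valid on $Y\cap\X^+$, and it is nonzero whenever $x\dot w^\bullet B^+\in\mathring{\mathcal R}_{e,w^\bullet}$.

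\emph{Parts (2)--(3).} The factorization $\overline{\varphi_i}=p_iq_i$ with $q_i\in H^0(G/B^+,\CL_{\theta\omega_i})$ cannot exist for $i\in\I_\bullet$. Here $-\theta\omega_i=w_\bullet\omega_{\tau i}$ pairs to $-1$ with $\alpha_i^\vee$, so it is not dominant and this space of sections is zero. Equivalently, $\overline{\omega_i}=\omega_i+w_\bullet\omega_{\tau i}<\omega_i+\omega_{\tau i}$, so $V(\overline{\omega_i})^*$ is not the Cartan component you invoke. Worse, $p_i$ vanishes on $\overline{B^-s_iB^+/B^+}$, which maps onto $G/P$ when $s_i\in W_{\I_\bullet}$, so no nonzero section pulled back from $G/P$ is divisible by $p_i$.

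Your fallback, containment plus irreducibility, presupposes that $Z(\overline{\varphi_i})$ is irreducible and reduced, which is exactly what must be proved. The chart identification of Proposition~\ref{prop:chart-richardson} only gives $\operatorname{div}(\overline{\varphi_i})\ge A_i$, with $A_i$ the relevant projected Richardson divisor(s). For $i\in\I_\circ$ your product picture matches the final answer, but the step ``restriction equals the Cartan product'' is asserted rather than derived.

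The missing idea is a global comparison over all $i$ at once. The $A_i$ exhaust the projected Richardson divisors of $Y$, so $\Delta=\sum_iA_i$ is anticanonical (Knutson--Lam--Speyer, Lemma~5.4). By (1), $\prod_i\overline{\varphi_i}$ is a nonzero section of $\CL_{-\overline\rho}|_Y=\omega_Y^{-1}$. Thus $\operatorname{div}(\prod_i\overline{\varphi_i})-\Delta$ is effective and linearly trivial on the projective integral variety $Y$, hence zero. This forces $\operatorname{div}(\overline{\varphi_i})=A_i$ for every $i$, multiplicities included. No factorization or fibrewise analysis is needed in the $\I_\bullet$ case.
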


\begin{proof}
The local structure theorem gives the charts
\[
\X^-\cong U_P^-\times Z,\qquad
\X^+=\dot w^\bullet\X^-\cong U_P^+\times\dot w^\bullet Z,
\qquad Z\cong\mathbb A^r;
\]
see \cite{DCS99}*{Propositions~3.6 and~3.8}. The toric variety
$Z$ has open torus $\breve T_{\mathrm{ad}}$ and a fixed point $z_0$
with stabilizer $P$. Identify $Y=G/P$ by $gP\mapsto gz_0$.
Then
\[
Y\cap\X^-=B^-P/P,\qquad
Y\cap\X^+=B^+\dot w^\bullet P/P.
\]

We first prove~(1). By Theorem~\ref{thm:linebundles} and its proof,
the section $\chi_{\overline{\omega_i}}$ is defined and nowhere vanishing
on $\X^-$. Thus $\dot w^\bullet\chi_{\overline{\omega_i}}$ is defined
on $\X^+$ and does not vanish there.
For $x\in U_P^+$ and $t\in T$, the definition of $\varphi_i$ gives
\[
\frac{\varphi_i(x\dot w^\bullet tK)}
{(\dot w^\bullet\chi_{\overline{\omega_i}})(x\dot w^\bullet tK)}
=c_i\Delta_{\omega_i,w^\bullet\omega_i}(x),
\qquad c_i\in k^\times.
\]
Indeed, $\theta(x)^{-1}$ fixes $v_{w^\bullet\omega_i}$ and the
common torus factor is $\overline{\omega_i}(t)$. The quotient descends under
$D$ because its numerator and denominator have the same character.
It is independent of the torus coordinate, so the same formula
holds on $Y\cap\X^+$. It is nonzero whenever
$x\dot w^\bullet B^+\in\mathring{\mathcal{R}}_{e,w^\bullet}$,
a nonempty open Richardson variety. Hence
$\overline{\varphi_i}\ne0$, proving~(1).

We now prove~(2) and~(3).
For $v\le w^\bullet$, let $\mathring C_v^\pm\subset U_P^\pm$
be the Richardson loci in the unipotent coordinates of
Proposition~\ref{prop:chart-richardson}. On the adjoint open orbit,
\[
\mathring S_v\cap\X^\pm
\cong\mathring C_v^\pm\times\breve T_{\mathrm{ad}}.
\]
Keeping the unipotent coordinate fixed and taking the torus factor
to $z_0$, or to $\dot w^\bullet z_0$, gives the points $xP$, or
$x\dot w^\bullet P$, in $\X_v\cap Y$. Taking closures in $Y$ yields
\begin{equation}\label{eq:pip}
\pi_P(\mathcal{R}_{w_\bullet,vw_0})\subseteq\X_v\cap Y,
\qquad
\pi_P(\mathcal{R}_{\kappa(v),w^\bullet})\subseteq\X_v\cap Y.
\end{equation}
Since $\X_{s_{\kappa(i)}}\subseteq Z(\varphi_i)$, these inclusions
immediately imply
\begin{equation}\label{eq:phi-divisor-bound}
\begin{gathered}
\operatorname{div}(\overline{\varphi_i})\ge A_i,\\
A_i:=\begin{cases}
\pi_P(\mathcal R_{s_i,w^\bullet})
+\pi_P(\mathcal R_{w_\bullet,w_0s_{\tau i}}),&i\notin \I_{\bullet
},\\
\pi_P(\mathcal R_{s_i,w^\bullet}),&i\in \I_{\bullet
}.
\end{cases}
\end{gathered}
\end{equation}
Here each projected Richardson variety has its reduced divisor
structure, and an empty variety contributes zero. For $i\in \I_{\bullet}$,
the two projected Richardson divisors coincide by
\cite{KLS14}*{Lemma~3.1}:
\[
\pi_P(\mathcal R_{s_i,w^\bullet})
=\pi_P(\mathcal R_{w_\bullet,w_0s_{\tau i}}).
\]

The $A_i$ together contain each projected Richardson divisor of
$G/P$ exactly once. Thus, by \cite{KLS14}*{Lemma~5.4},
\[
\Delta:=\sum_{i\in\I}A_i
=\sum_{i\in\I}\pi_P(\mathcal R_{s_i,w^\bullet})
+\sum_{i\notin J}\pi_P(\mathcal R_{w_\bullet,w_0s_i})
\]
is anticanonical. On the other hand,
$\overline\rho=\sum_i\overline{\omega_i}
=\rho+w_\bullet\rho=2\rho-2\rho_{\I_{\bullet}}$, so
\[
0\ne\prod_{i\in\I}\overline{\varphi_i}
\in H^0(Y,\CL_{-\overline\rho}|_Y)
=H^0(Y,\omega_Y^{-1});
\]
see \cite{BK05}*{proof of Theorem~4.1.15, equation~(1)}.
Consequently,
\[
\operatorname{div}\!\left(\prod_i\overline{\varphi_i}\right)
\ge\Delta.
\]
Both divisors are anticanonical. Their difference is effective and
linearly equivalent to zero on the projective integral variety $Y$,
and hence vanishes. Therefore
$\operatorname{div}(\overline{\varphi_i})=A_i$ for every $i$.
Since the $A_i$ are reduced divisors on the smooth variety $Y$,
these equalities identify the zero schemes and prove~(2) and~(3).
\end{proof}

\subsection{The canonical Frobenius splitting}\label{subsec:wonderful-canonical-splitting}

Assume $\operatorname{char}k=p>2$.

\begin{theorem}\label{thm:WCsplit}
The wonderful compactification $\X$ admits a $B^+$-canonical Frobenius splitting that compatibly splits each divisor $\X_{s_i}$, for $i\in\I$, and each boundary divisor $D_i$, for $i\in\Ir$.
\end{theorem}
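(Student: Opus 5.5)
The plan is to produce an explicit anticanonical section on $\X$ and show that its $(p-1)$-st power is a splitting, in the style of De Concini--Springer and Brion--Polo. By Proposition~\ref{prop:wc}(2),(3),
\[
\omega_\X^{-1}\cong\CL_{-\overline\rho-\sum_{i\in\Ir}\overline\alpha_i}
\cong\CL_{-\overline\rho}\otimes\CO_\X\Bigl(\sum_{i\in\Ir}D_i\Bigr).
\]
Let $s_{D_i}$ denote the canonical section of $\CO_\X(D_i)$. Since $\overline\rho=\sum_{i\in\I}\overline{\omega_i}$, the product
\[
\sigma=\prod_{i\in\I}\varphi_i\cdot\prod_{i\in\Ir}s_{D_i}\in H^0(\X,\omega_\X^{-1})
\]
is a well-defined anticanonical section. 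Its zero divisor contains every $\X_{s_{\kappa(i)}}$, and $\kappa$ permutes $\I$, so it contains every $\X_{s_i}$; it also contains every $D_i$.

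The first step is to show that $\sigma^{p-1}$ splits $\X$. I would use the standard criterion for smooth projective varieties: it suffices to check, at one point, that the term $\prod x_j^{p-1}$ appears with coefficient $1$ in local coordinates. It is also enough to show that $\sigma$ vanishes to the right order along a compatibly split subvariety. Following \cite{BK05}*{Theorem 1.3.8} and \cite{DCS99}*{Theorem 5.9}, I would restrict to the closed orbit $Y=\bigcap_i D_i$. Near a point of $Y$ in the chart $\X^-\cong U_P^-\times Z$, with $Z\cong\mathbb A^r$, the $D_i$ are the coordinate hyperplanes of $Z$; this uses Proposition~\ref{prop:wc}(4) and the local structure theorem. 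Hence $\sigma^{p-1}$ has the form $\bigl(\prod_{i\in\Ir}z_i\bigr)^{p-1}\cdot\tau^{p-1}$, where $\tau=\prod_i\varphi_i$. Its residue along the normal crossing intersection is, up to a scalar, the $(p-1)$-st power of $\overline\tau=\prod_i\overline{\varphi_i}$ on $Y$. The coefficient check therefore reduces, via \cite{BK05}*{Proposition 1.3.11} and the iterated residue argument, to showing that $\overline\tau^{p-1}$ splits $Y$.

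The second step is to prove that $\overline\tau^{p-1}$ splits $Y=G/P$. By the proof of Proposition~\ref{prop:dv}, $\operatorname{div}(\overline\tau)=\Delta$ is the sum of all projected Richardson divisors. Equivalently, $\overline\tau$ agrees up to a scalar with the section $\prod_i\pi_P$-images of $p_i^- p_i^+$ type, namely the product of the Schubert-divisor sections for $B^-$ and $B^+$ boundary divisors. By \cite{BK05}*{Theorem 4.1.15} and \cite{KLS14}, the $(p-1)$-st power of that section is the standard $B^+$-canonical splitting of $G/P$ compatible with all projected Richardson varieties. The only thing to confirm is that the scalar normalizes to $1$. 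This can be arranged by rescaling the $\varphi_i$, because a nonzero multiple of a splitting section is a splitting after normalization at a point of the open cell where the coefficient is nonzero.

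Granting the two steps, the conclusions follow. Once $\sigma^{p-1}$ splits $\X$, all irreducible components of $Z(\sigma)$ are compatibly split by \cite{BK05}*{Proposition 1.3.11}; these include each $\X_{s_i}$ and each $D_i$. $B^+$-canonicity holds because $\sigma$ is a product of $B^+$-semi-invariant sections: the $\varphi_i$ are $B^+$-eigen up to the twist, and the $D_i$ are $G$-stable. The result is then the criterion \cite{BK05}*{Theorem 4.1.17}: the $(p-1)$-st power of a $B^+$-eigen anticanonical section of weight $(p-1)$ times $-2\rho$-type splitting is canonical. I expect the main obstacle to be the residue step, namely verifying that the local expansion of $\sigma^{p-1}$ near $Y$ factors as claimed. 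This needs the $\varphi_i$ to have no extra vanishing along the boundary, which holds because $\overline{\varphi_i}\neq0$ by Proposition~\ref{prop:dv}(1).
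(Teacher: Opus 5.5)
Your construction of $\sigma$, the reduction by successive adjunction along the normal-crossing boundary to the closed orbit $Y=G/P$, and the use of reducedness of $Z(\sigma)$ for compatibility all follow the paper's proof. The genuine gap is $B^+$-canonicity, which is part of the statement.

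\emph{Why your canonicity argument fails.} The $\varphi_i$ are not $B^+$-eigen, with or without the twist, and neither is $\sigma$. Restriction to the $G$-stable orbit $Y$ is equivariant, and the $s_{D_i}$ are $G$-invariant. By Proposition~\ref{prop:dv}(2), the divisor of $\prod_i\overline{\varphi_i}$ contains the opposite Schubert divisors $\overline{B^-s_iP/P}$, $i\in\I_\circ$. These are not $B^+$-stable, since they would otherwise be $G$-stable. Conceptually, the minor $\Delta_{\omega_i,w^\bullet\omega_i}$ is left $B^-$-eigen and right $\theta(B^+)$-eigen. After the twist, its evident eigen-property survives only for $T=B^+\cap B^-$. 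So no eigen-section criterion applies.

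\emph{The missing argument.} What is needed is the divided-power check.
\begin{enumerate}
\item The left $T$-weight of $\varphi_i$ is $-\omega_i-w_0\omega_{\tau i}$. These weights sum to zero over $i\in\I$, so $\sigma$ is $T$-invariant.
\item Let $x_j(a)\subset B^+$ be a simple root subgroup. The vector $v_{w^\bullet\omega_i}$ is fixed by $\theta(U^+)=\dot w^\bullet U^+(\dot w^\bullet)^{-1}$, so
\[
(x_j(a)\cdot\varphi_i)(gK)=v_{\omega_i}^*\bigl(x_j(-a)g\theta(g)^{-1}v_{w^\bullet\omega_i}\bigr).
\]
This is a polynomial in $a$ of degree at most $\langle\alpha_j^\vee,\omega_i\rangle=\delta_{ij}$, the length of the $\alpha_j$-string through the lowest weight vector $v_{\omega_i}^*$.
\item Hence $x_j(a)\cdot\sigma$ has degree at most $1$, and $x_j(a)\cdot\sigma^{p-1}$ has degree at most $p-1$.
\end{enumerate}
Via the equivariant identification $H^0(\X,\omega_\X^{1-p})\cong\operatorname{End}_F(\X)$ [BK05, Lemma~4.1.14], these are exactly the torus-invariance and divided-power conditions of [BK05, Lemma~4.1.6] for a $B^+$-canonical splitting.

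There is also an inaccuracy in your second step. In general, for instance whenever a node of $\I_\circ$ is adjacent to $\I_\bullet$, the section $\prod_i\overline{\varphi_i}$ is not the product of the $B^+$- and $B^-$-Schubert divisor sections of $G/P$. That product is not anticanonical. Moreover, $\Delta$ also contains the divisors $\pi_P(\mathcal R_{s_i,w^\bullet})$, $i\in\I_\bullet$, of Proposition~\ref{prop:dv}(3), which are neither $B^+$- nor $B^-$-stable.

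You should instead identify $(\prod_i\overline{\varphi_i})^{p-1}$ with the splitting of [KLS14, Corollary~4.3] as the paper does. That splitting compatibly splits every component of $\Delta$. So its section of $\omega_Y^{1-p}$ vanishes to order at least $p-1$ along each of them [BK05, Exercise~1.3.E.3]. Since $\Delta$ is anticanonical [KLS14, Lemma~5.4], that section is a nonzero multiple of $(\prod_i\overline{\varphi_i})^{p-1}$. With these two repairs, your argument goes through.
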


\begin{proof}
For each $i\in\Ir$, Proposition \ref{prop:wc}(3) provides a
$G$-invariant section
\[
f_i\in H^0(\X,\CL_{-\overline{\alpha_i}})
\]
whose zero divisor is $D_i$. Proposition \ref{prop:wc}(2) gives
\[
\sigma=\left(\prod_{i\in\Ir}f_i\right)
\left(\prod_{i\in\I}\varphi_i\right)
\in H^0(\X,\omega_\X^{-1}).
\]
The character restriction is retained throughout: the second product
belongs to $E_{\overline\rho}$ under
Theorem~\ref{thm:linebundles}.

By Proposition~\ref{prop:dv}, the restriction
$\prod_i\overline{\varphi_i}$ has divisor $\Delta$, the reduced
union of all projected Richardson divisors of $Y=G/P$.
The splitting in \cite{KLS14}*{Corollary~4.3} compatibly splits
these divisors, so its section of $\omega_Y^{1-p}$ vanishes along
$(p-1)\Delta$ by \cite{BK05}*{Exercise~1.3.E.3}.
Since $\Delta$ is anticanonical by \cite{KLS14}*{Lemma~5.4},
that section is a nonzero scalar multiple of
$(\prod_i\overline{\varphi_i})^{p-1}$.
The boundary divisors of $\X$ are smooth with normal crossings
and have intersection $Y$ by \cite{DCS99}*{Theorem~3.9}.
Successive adjunction along them and
\cite{BK05}*{Exercise~1.3.E.4} show that $\sigma^{p-1}$ defines
a Frobenius splitting after a nonzero scalar normalization.

By \cite{BK05}*{Proposition~1.3.11}, the zero scheme of $\sigma$
is reduced and compatibly split. Each $Z(\varphi_i)$ is therefore
reduced. Its support on the open orbit is
$S_{s_{\kappa(i)}}$. It has no boundary-divisor component:
every boundary divisor contains $Y$, whereas
$\overline{\varphi_i}\ne0$ by Proposition~\ref{prop:dv}(1).
Thus $Z(\varphi_i)=\X_{s_{\kappa(i)}}$. Since $\kappa$
permutes $\I$, the splitting is compatible with every $\X_{s_i}$
and every $D_i$.

We verify $B^+$-canonicality directly. The left $T$-weight of
$\varphi_i$ is
\[
-\omega_i+\theta(w^\bullet\omega_i)=-\omega_i-w_0\omega_{\tau i}.
\]
Since $w_0\rho=-\rho$, their sum is zero, and $\sigma$ is
$T$-invariant. Fix a simple root subgroup $x_j(a)\subset B^+$.
The vector $v_{w^\bullet\omega_i}$ is fixed by $\theta(U^+)$, because
$\theta(B^+)=\dot w^\bullet B^+(\dot w^\bullet)^{-1}$. Hence on $S_{\mathrm{sc}}$,
\[
(x_j(a)\cdot\varphi_i)(gK)
=v_{\omega_i}^*\!\left(x_j(-a)g\theta(g)^{-1}v_{w^\bullet\omega_i}\right).
\]
The simple-root string through the lowest weight vector
$v_{\omega_i}^*\in V(\omega_i)^*$ has length
$\langle\alpha_j^\vee,\omega_i\rangle=\delta_{ij}$.
Thus this expression has degree at most $\delta_{ij}$ in $a$.
The section identifications of Theorem~\ref{thm:linebundles} are
$G$-equivariant, and the $f_i$ are $G$-invariant. Consequently,
$x_j(a)\cdot\sigma$ has degree at most one in $a$, and
$x_j(a)\cdot\sigma^{p-1}$ has degree at most $p-1$.
Under the equivariant identification
$H^0(\X,\omega_\X^{1-p})\cong\operatorname{End}_F(\X)$ of
\cite{BK05}*{Lemma~4.1.14}, these are exactly the torus-invariance
and divided-power conditions of \cite{BK05}*{Lemma~4.1.6}.
The normalized splitting is therefore $B^+$-canonical.
\end{proof}

\begin{remark}
The splitting of $G/P$ used in \cite{KLS14}*{Corollary~4.3} is the
pushforward of the standard splitting of $G/B^+$, and is
$B^+$-canonical; see \cite{BK05}*{Theorem~4.1.15}.
The direct verification above also proves canonicality on $\X$
without lifting a map through restriction to the closed orbit.
\end{remark}

\begin{cor}\label{cor:si}
(1) For $i\in\I$, the zero scheme $Z(\varphi_i)$ is reduced and
\[
Z(\varphi_i)=\X_{s_{\kappa(i)}},\qquad
\CO_\X(\X_{s_{\kappa(i)}})\cong\CL_{-\overline{\omega_i}}.
\]

(2) $\cup_{i\in\I}(\X_{s_i}\cap G/P)$ is the union of all projected Richardson divisors of $G/P$.
\end{cor}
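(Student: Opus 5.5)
Part (1) is assembled from the proof of Theorem~\ref{thm:WCsplit}, and part (2) by restricting to the closed orbit and invoking Proposition~\ref{prop:dv}.

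For (1), Theorem~\ref{thm:WCsplit} shows that $\sigma^{p-1}$ splits $\X$ in characteristic $p>2$. By \cite{BK05}*{Proposition~1.3.11}, $Z(\sigma)$ is then reduced. Since $\sigma=\prod f_i\prod\varphi_i$, every factor $Z(\varphi_i)$ is reduced as well. Over the open orbit, $Z(\varphi_i)$ has support $S_{s_{\kappa(i)}}$, and this stratum is irreducible by Lemma~\ref{lem:prelim-strata-intersection}. So $Z(\varphi_i)$ is the union of $\X_{s_{\kappa(i)}}$ with possible boundary components. Any boundary component would be some $D_j$, which contains $Y$; this would force $\overline{\varphi_i}=0$, contradicting Proposition~\ref{prop:dv}(1). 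Hence $Z(\varphi_i)=\X_{s_{\kappa(i)}}$ as reduced schemes. Since $\varphi_i\in H^0(\X,\CL_{-\overline{\omega_i}})$ by Theorem~\ref{thm:linebundles}, we get $\CO_\X(\X_{s_{\kappa(i)}})\cong\CL_{-\overline{\omega_i}}$.

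This argument needs $p>2$, whereas the statement only assumes $\operatorname{char}k\ne2$. For characteristic $0$ I would argue directly on the smooth variety $\X$. The divisor of $\varphi_i$ is supported on $\X_{s_{\kappa(i)}}$ with some multiplicity $m$. Restricting to $Y$ is allowed because $\X_{s_{\kappa(i)}}$ meets $Y$ properly, again by Proposition~\ref{prop:dv}(1). The multiplicity $m$ is bounded above by the multiplicities in $\operatorname{div}(\overline{\varphi_i})=A_i$, and $A_i$ is reduced, so $m=1$. Alternatively one can spread out over $\BZ[1/2]$ and reduce to large $p$. The restriction approach is the cleaner one and would also cover $p>2$.

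For (2), pull back the identity $Z(\varphi_i)=\X_{s_{\kappa(i)}}$ to $Y$. This gives $\X_{s_{\kappa(i)}}\cap Y=Z(\overline{\varphi_i})$ scheme-theoretically, and by Proposition~\ref{prop:dv} the right side equals the reduced divisor $A_i$. Since $\kappa$ permutes $\I$, the union over $i$ is $\operatorname{supp}\Delta$. In the proof of Proposition~\ref{prop:dv}, $\Delta$ was shown to contain each projected Richardson divisor exactly once \cite{KLS14}*{Lemma~5.4}, so the union is the union of all projected Richardson divisors.

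The main obstacle is the multiplicity step in (1) when one is not in the split setting: showing that the restriction $\X_{s_{\kappa(i)}}\cap Y$ is generically reduced along each component. This should follow from the transversality implicit in the chart description of Proposition~\ref{prop:chart-richardson}, where the stratum is a product with the torus factor near $Y$.
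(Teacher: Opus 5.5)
Your argument for $p>2$ is the paper's own. Part~(1) is read off from the proof of Theorem~\ref{thm:WCsplit}: $Z(\sigma)$ is reduced, $Z(\varphi_i)$ has support $S_{s_{\kappa(i)}}$ on the open orbit, and it has no boundary component because $\overline{\varphi_i}\neq0$. Part~(2) then follows by restricting $Z(\varphi_i)=\X_{s_{\kappa(i)}}$ to $Y$ and invoking Proposition~\ref{prop:dv}.

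The restriction argument you give for characteristic zero is a genuinely different, splitting-free route. You should promote it rather than hedge it. It is complete as stated and works whenever $\operatorname{char}k\neq2$, which is what the introduction claims, whereas the splitting argument only covers $p>2$.

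The ``main obstacle'' you flag at the end is not one. Since $\X$ is smooth and $Y\not\subset\X_{s_{\kappa(i)}}$, the prime divisor $\X_{s_{\kappa(i)}}$ restricts to an effective Cartier divisor $E$ on $Y$. Then $\operatorname{div}(\varphi_i)=m\,\X_{s_{\kappa(i)}}$ gives $A_i=\operatorname{div}(\overline{\varphi_i})=mE$. Because $A_i$ is reduced and nonzero, this forces $m=1$. At the same time it shows that $\X_{s_{\kappa(i)}}\cap Y=A_i$ is reduced, so no transversality from the charts of Proposition~\ref{prop:chart-richardson} is needed. Proposition~\ref{prop:dv} is itself proved without any Frobenius splitting, so this route is characteristic-free.

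The one detail to add is that $E\neq0$ whenever the stratum is nonempty.
\begin{itemize}
\item If $\X_{s_{\kappa(i)}}\neq\emptyset$, then $s_{\kappa(i)}\le w^\bullet$. This is equivalent to $s_i\le w^\bullet$, because $\kappa$ is an order-preserving involution fixing $w^\bullet$. Then $A_i\supseteq\pi_P(\mathcal R_{s_i,w^\bullet})\neq\emptyset$.
\item If the stratum is empty, then $\operatorname{div}(\varphi_i)=0$, and both $Z(\varphi_i)$ and $\X_{s_{\kappa(i)}}$ are empty.
\end{itemize}
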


\section{Equivariant embeddings}\label{sec:equivariant-embeddings}

\subsection{Frobenius splittings of equivariant embeddings}\label{subsec:embeddings-splitting}

Return to the general connected reductive group $G$ and symmetric space
$S=G/K$ of Section~\ref{sec:preliminaries}.  
For an equivariant embedding $X$ of $S$, let $X_u$ be the 
closure of $S_u$ in $X$.
Write $\partial X$ for the reduced boundary divisor, the sum of
the prime divisors in $X\setminus S$.

Put $G_{\mathrm{ad}}=G/Z(G)$ and
$S_{\mathrm{ad}}=G_{\mathrm{ad}}/G_{\mathrm{ad}}^{\theta_{\mathrm{ad}}}$.
Apply
Sections~\ref{sec:enveloping-variety} and \ref{sec:wonderful-splitting}
to the simply connected cover of $G_{\mathrm{ad}}$, with the lifted
involution. This gives the wonderful compactification $\X$ of
$S_{\mathrm{ad}}$ and the sections $\varphi_i$ in line bundles on $\X$.
Denote the natural map $S\to S_{\mathrm{ad}}$ by $\pi$; the same
symbol will denote its extension to a toroidal embedding.

\begin{lemma}\label{lem:embeddings-toroidal-map}
Let $X$ be a nonsingular complete toroidal embedding of $S$. Then $\pi$ extends to a $G$-equivariant morphism $\pi:X\to\X$. If $Y\subset X$ and $\Y\subset\X$ are closed $G$-orbits, then $\pi|_Y:Y\to\Y$ is an isomorphism. Moreover, if
\[
\widetilde\omega_X=\omega_X\otimes\CO_X(\partial X),
\qquad
\widetilde\omega_{\X}=\omega_{\X}\otimes\CO_{\X}(\partial\X),
\]
then
\[
\widetilde\omega_X\simeq\pi^*\widetilde\omega_{\X}.
\]
\end{lemma}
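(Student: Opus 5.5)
The proof will use the Luna--Vust theory of toroidal embeddings, applied to the symmetric spaces $S$ and $S_{\mathrm{ad}}$ as in De Concini--Procesi and De Concini--Springer. It has three steps, taken in order.

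\textbf{Step 1: the morphism.} The natural map $S\to S_{\mathrm{ad}}$ is $G$-equivariant. The valuation cone of $S$ is identified with the negative Weyl chamber of the restricted root system, and the map $\breve Y\to \breve Y_{\mathrm{ad}}$ sends this cone into the valuation cone of $S_{\mathrm{ad}}$. The wonderful compactification $\X$ is the toroidal embedding whose fan is that cone together with all its faces. Since $X$ is complete and toroidal, its fan is supported in the valuation cone. So every cone of $X$ maps into a cone of the fan of $\X$. By the functoriality criterion for morphisms of spherical embeddings (Knop), $\pi$ then extends to a $G$-equivariant morphism $X\to\X$. Toroidality matters here: $X$ has no colors, so no color condition has to be checked.

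\textbf{Step 2: closed orbits.} Let $Y\subset X$ be a closed orbit. It corresponds to a maximal cone of the fan, which has dimension $r$ and lies in the interior-meeting part of the valuation cone. Its image under $\pi$ is a closed orbit of $\X$, and the only one is $\Y\cong G/P$. I would show that $Y\cong G/P$ as well, using the local structure theorem. On the open chart $X^-$ one has $X^-\cong U_P^-\times Z_X$, with $Z_X$ a smooth affine toric variety for $\breve T$. Because $X$ is nonsingular, the maximal cone is simplicial and unimodular, so $Z_X\cong\mathbb A^r$. Its fixed point has stabilizer $P$, since the stabilizer contains $B^-$ and the Levi of $P$. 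The chart on $\X$ has the compatible form $U_P^-\times Z$, and $\pi$ is the identity on $U_P^-$ while sending the toric fixed point to the toric fixed point. Hence $\pi|_Y:G/P\to G/P$ is an equivariant map fixing the base point, so it is an isomorphism.

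\textbf{Step 3: log canonical sheaves.} I expect this to be the main point. On the charts, $X^-\cong U_P^-\times Z_X$ and $\X^-\cong U_P^-\times Z$, where $Z_X\to Z$ is a toric morphism induced by $\breve T\to\breve T_{\mathrm{ad}}$ together with a map of fans. For toric varieties, $\omega(\partial)$ is trivialized by the invariant form $\prod dz_i/z_i$. The pullback of the invariant log form on $Z$ is a nonzero constant times the invariant log form on $Z_X$, because $\breve T\to\breve T_{\mathrm{ad}}$ is an isogeny and $\operatorname{char}k\neq2$. The isogeny's degree is a power of $2$, so it is separable and the constant is a unit. Multiplying by a fixed volume form on $U_P^-$ gives a $B^-$-eigen generator $\omega^-$ of $\widetilde\omega$ on each chart, and $\pi^*\omega^-_{\X}=c\,\omega^-_X$ with $c\in k^\times$. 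Translating by $G$ covers $X$, because every $G$-orbit of the toroidal $X$ meets $X^-$. Hence the natural map $\pi^*\widetilde\omega_{\X}\to\widetilde\omega_X$ is an isomorphism everywhere.

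A more global alternative is to write $\widetilde\omega_X\cong\CO_X$ twisted by the $B^-$-weight of a $B^-$-eigen log form. This expresses $\widetilde\omega_X$ as $-\sum$ of the closures of the $B^-$-stable colors, with coefficients given by the same weight pairing on $X$ and on $\X$. Since $\pi$ is toroidal, it pulls colors back to colors, and the two expressions match.

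The delicate point is separability, that is, that the isogeny does not ramify the log form. This is exactly where $\operatorname{char}k\neq 2$ is used.
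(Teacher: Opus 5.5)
\textbf{Verdict: there is a genuine gap in Step 3.}

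Your Step 1 follows the paper's route: the paper cites Knop's criterion \cite{Kno91}*{Theorem~4.1} for the extension, and simply quotes \cite{Tan18}*{Lemmas~1.2 and~1.4} for the other two assertions. Step 3, which you correctly single out as the crux, rests on a false claim. The isogeny $\breve T\to\breve T_{\mathrm{ad}}$ does not in general have $2$-power degree. Already for simply connected $G$, its kernel is $D=H/K\subset Z(G)$, whose order is $[\breve X:\breve X_{\mathrm{ad}}]$, and this need not be a power of $2$.

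For $S=\mathrm{SL}_3/\mathrm{SO}_3$ one has $\theta_X=-1$, hence $\breve X=2X$ and $\breve X_{\mathrm{ad}}=2Q$. The index is $3$ and $D\cong\mu_3$. The pullback of the invariant top log form of $\breve T_{\mathrm{ad}}$ is therefore $\pm3$ times the invariant top log form of $\breve T$. So for $p=3$, which is within the scope of the lemma and of Theorem~\ref{thm:main}, your natural map $\pi^*\widetilde\omega_{\X}\to\widetilde\omega_X$ is identically zero rather than an isomorphism.

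There is a second failure. For reductive $G$ whose center contains a $\theta$-split torus (e.g.\ $\mathrm{GL}_n/\mathrm{O}_n$), $\breve T\to\breve T_{\mathrm{ad}}$ is not an isogeny at all and $\dim X>\dim\X$. Top-degree forms on $\X$ then do not pull back to top-degree forms on $X$, so your comparison map does not even exist.

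Your ``global alternative'' does not close the gap. The coefficients of $\widetilde\omega_X$ along the color closures are orders of vanishing of the eigen log form. They are not determined by its $B^-$-weight: the weight pairing computes divisors of eigen\emph{functions}, not of eigensections of a nontrivial bundle. In the inseparable case you would also have to check that $\pi^*$ of each color of $\X$ is reduced.

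What is needed is a comparison of the two line bundles that never pulls back forms. One possible route is the following. The boundary of a smooth toroidal embedding has simple normal crossings. A closed orbit $Y$ is the transversal intersection of the boundary divisors containing it and is disjoint from the others. Iterated adjunction then gives equivariant isomorphisms $\widetilde\omega_X|_Y\cong\omega_Y$ and $\widetilde\omega_{\X}|_{\Y}\cong\omega_{\Y}$. Combined with your Step 2, this yields $(\pi^*\widetilde\omega_{\X})|_Y\cong\omega_Y$ for every closed orbit $Y$. You would then need that a $G$-linearized line bundle on a smooth complete toroidal embedding is determined by its restrictions to the closed orbits. This is the analogue of the injectivity $\operatorname{Pic}(\X)\hookrightarrow\operatorname{Pic}(Y)$ used in the paper, and it requires its own argument; unlike your Step 3, it is insensitive to inseparability and to positive relative dimension.

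There is also a smaller slip in Step 2. The stabilizer of the toric fixed point $z_0$ cannot contain $B^-$, because $U_P^-$ acts freely on the chart $U_P^-\times Z_X$; the stabilizer is $P\supset B^+$. Your conclusion survives. The map $\pi$ restricts to the identity between the open cells $U_P^-z_0$ and $U_P^-\pi(z_0)$, so the equivariant map $Y\to\Y$ is birational between homogeneous spaces, hence an isomorphism.
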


\begin{proof}
The extension of $\pi$ follows from \cite{Kno91}*{Theorem 4.1}, the assertion on closed orbits from \cite{Tan18}*{Lemma 1.2}, and the identity of logarithmic canonical sheaves from \cite{Tan18}*{Lemma 1.4}.
\end{proof}

\begin{theorem}\label{thm:main}
Assume $\operatorname{char}k=p>2$, and let $X$ be an equivariant embedding of $S$.
\begin{enumerate}[label=\textup{(\arabic*)},leftmargin=*]
\item If $X$ is smooth, then $X$ admits a Frobenius splitting relative to
\[
(p-1)\left(\partial X+\sum_{i\in\I}X_{s_i}\right)
\]
that compatibly splits every boundary divisor and every stratum closure $X_u$.
\item Every equivariant embedding $X$ admits a Frobenius splitting compatible with every boundary divisor and every stratum closure $X_u$.
\end{enumerate}
\end{theorem}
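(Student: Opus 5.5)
The plan is to pull the splitting section of Theorem~\ref{thm:WCsplit} back along toroidal maps, and then descend to arbitrary embeddings by pushforward. This follows the strategy of Brion--Thomsen and Tange.

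\emph{Step 1 (smooth complete toroidal case).} Let $X$ be a nonsingular complete toroidal embedding, with $\pi:X\to\X$ as in Lemma~\ref{lem:embeddings-toroidal-map}. Write
\[
\tau_\X=\prod_{i\in\I}\varphi_i\in H^0(\X,\CL_{-\overline\rho}).
\]
By Proposition~\ref{prop:wc}(2),(3) it is a section of $\widetilde\omega_\X^{-1}$, and $\sigma=(\prod f_i)\tau_\X$. The isomorphism $\widetilde\omega_X\simeq\pi^*\widetilde\omega_\X$ makes $\pi^*\tau_\X$ a section of $\omega_X^{-1}(-\partial X)$. Multiplying by the canonical section $s_{\partial X}$ gives
\[
\sigma_X=s_{\partial X}\,\pi^*\tau_\X\in H^0(X,\omega_X^{-1}).
\]
Since $\pi$ is $G$-equivariant and restricts over $S_{\mathrm{ad}}$ to $S\to S_{\mathrm{ad}}$, the pullback $\pi^*\varphi_i$ vanishes on $X_{s_{\kappa(i)}}$. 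It has no boundary components, because its restriction to the closed orbit $Y\cong\Y$ is nonzero by Proposition~\ref{prop:dv}(1), and every boundary divisor of a toroidal embedding contains a closed orbit. So $Z(\sigma_X)\supseteq\partial X+\sum_i X_{s_i}$ as divisors; by degree considerations on the open orbit I expect equality.

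To see that $\sigma_X^{p-1}$ splits $X$, I would copy the argument of Theorem~\ref{thm:WCsplit}. The boundary $\partial X$ is a simple normal crossing divisor whose components meet along a closed orbit $Y$. Successive adjunction reduces the question to showing that the residue of $\sigma_X^{p-1}$ on $Y$ is a nonzero multiple of $(\prod_i\overline{\varphi_i})^{p-1}$. That section is the KLS splitting of $G/P$ under $\pi|_Y$, and \cite{BK05}*{Exercise~1.3.E.4} then applies. Proposition~\ref{prop:dv} supplies exactly this input, and the argument should transfer verbatim once one knows that the adjunction residue of $\pi^*\widetilde\omega_\X$ along the corner is compatible with restriction to $Y$. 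I expect this to be the main obstacle: \cite{Tan18}*{Lemma 1.4} is the natural tool, but its residue-level compatibility needs checking.

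Once $\sigma_X^{p-1}$ splits $X$, \cite{BK05}*{Theorem~1.4.10 and Proposition~1.3.11} give a splitting relative to $(p-1)Z(\sigma_X)$ that compatibly splits the reduced divisor $Z(\sigma_X)$. Hence each irreducible component is compatibly split, in particular every boundary divisor and every $X_{s_i}$. For general $u\le w^\bullet$, choose a reduced expression of $u$. Using Lemma~\ref{lem:prelim-strata-intersection}(2) and the closure properties recalled in Section~\ref{subsec:prelim-frobenius}, I would show inductively that $X_u$ is an irreducible component of an intersection $X_{v}\cap X_{s_i}$ with $\ell(v)=\ell(u)-1$. On the open orbit this is the statement that the codimension-one strata in $S_v$ are among the $S_{vs}$ or $S_{sv}$, a Richardson-type fact transported by Proposition~\ref{prop:chart-richardson}. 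Iterating shows every $X_u$ is compatibly split.

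\emph{Step 2 (reduction for (1) and (2)).} For a smooth, not necessarily toroidal or complete, $X$, first complete it equivariantly (Sumihiro). Then take a smooth toroidal $\widetilde X$ with a proper birational equivariant morphism $f:\widetilde X\to\overline X$ which is an isomorphism over $S$, as in \cite{Tan18}*{Section~3}. Since $\overline X$ is normal, $f_*\CO_{\widetilde X}=\CO_{\overline X}$, so $f_*$ of the splitting of Step~1 is a splitting of $\overline X$. Images of compatibly split subvarieties are compatibly split, so every $X_u$ and every boundary divisor are compatibly split, the latter being images of boundary divisors of $\widetilde X$. Restricting to the open subset $X$ gives (2).

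For (1) when $X$ is smooth, the pushforward section of $\omega_X^{1-p}$ is determined on the open set where $f$ is an isomorphism. Its complement in $X$ has codimension at least two in the relevant charts, so the section equals $\sigma_X^{p-1}$ extended by Hartogs. This yields the splitting relative to $(p-1)(\partial X+\sum_i X_{s_i})$.
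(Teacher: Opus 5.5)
Your construction is the paper's. You pull back $\prod_{i\in\I}\varphi_i$ along $\pi:X\to\X$, multiply by $s_{\partial X}$, and check the splitting at a point of a closed orbit. You then read off compatibility of the boundary divisors and the $X_{s_i}$ from the reduced zero scheme, and reach arbitrary embeddings via an equivariant smooth toroidal resolution, a completion, and pushforward.

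The obstacle you flag is harmless. The residue identification $\widetilde\omega_X|_Y\cong\omega_Y$ and the one induced by $\pi|_Y$ differ by an automorphism of a line bundle on the projective homogeneous $Y$, hence by a nonzero scalar. The final normalization absorbs that scalar. For the relative statement in (1), the paper simply applies Brion--Kumar's Theorem~1.4.10 to the compatibly split prime divisors of the smooth $X$. Your Hartogs argument reaches the same conclusion.

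The genuine gap is the passage from the divisors $X_{s_i}$ to arbitrary $X_u$. You claim that $X_u$ is an irreducible component of some $X_v\cap X_{s_i}$ with $\ell(v)=\ell(u)-1$, and this is false in general. Take $G=\mathrm{SL}_3$ with a split involution, so $w^\bullet=w_0$, and let $u=s_1s_2s_1$. The only $v\le u$ of length two are $s_1s_2$ and $s_2s_1$. Both lie above $s_1$ and $s_2$, so $X_v\subseteq X_{s_i}$, and $X_v\cap X_{s_i}=X_v$ is irreducible of dimension $\dim X_u+1$. Your justification is also inaccurate: Bruhat covers of $v$ have the form $vt$ for reflections $t$, not only $vs$ or $sv$ with $s$ simple. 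For instance, $s_1s_3<s_1s_2s_3$ in type $A_3$ is a cover of neither form.

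The correct input is the diamond property of Bruhat intervals.
\begin{itemize}
\item If $\ell(u)\ge2$, choose $z\le u$ with $\ell(z)=\ell(u)-2$. The interval $[z,u]$ has exactly two middle elements $v\ne v'$, both $\le w^\bullet$.
\item Then $S_u\subseteq S_v\cap S_{v'}$, while $\mathring S_v\not\subseteq S_{v'}$ because $v'\not\le v$.
\item Hence every component of $S_v\cap S_{v'}$ has dimension at most $\dim S_v-1=\dim S_u$, by Lemma~\ref{lem:prelim-strata-intersection}(3). The irreducible $S_u$ is therefore one of these components.
\item A component of $X_v\cap X_{v'}$ containing $X_u$ meets $S$ in an irreducible closed subset of $S_v\cap S_{v'}$ containing $S_u$, hence in exactly $S_u$. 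So $X_u$ is a component of $X_v\cap X_{v'}$.
\end{itemize}
Induction on $\ell(u)$, starting from the $X_{s_i}$, then makes every $X_u$ compatibly split. This is what the paper's appeal to ``standard combinatorics on Bruhat orders'' supplies.
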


\begin{proof}
We prove the two statements together. We follow the outline of \cite{Tan18}*{Proposition 2.1(i)}, using the distinguished sections of Section~\ref{sec:wonderful-splitting}. By \cite{Tan18}*{Theorem 2.1(i)}, there is a $G$-equivariant rational resolution
\[
q:\widetilde X\longrightarrow X
\]
such that $q$ is projective and $\widetilde X$ is a nonsingular quasi-projective toroidal embedding. Since $X$ is normal and $q$ is proper and birational, $q_*\CO_{\widetilde X}=\CO_X$. By \cite{BK05}*{Lemma 1.1.8}, a splitting of $\widetilde X$ descends to $X$, together with compatibility with the images of compatibly split subvarieties. It therefore suffices to construct the compatible splitting on $\widetilde X$; the relative assertion for smooth $X$ will follow at the end. Thus we may assume for the construction that $X$ is nonsingular, toroidal, and quasi-projective.

By \cite{Tim07}*{Section 29.2} or \cite{Kno91}, $X$ admits a nonsingular toroidal equivariant completion. A splitting of the completion restricts to $X$, as does compatibility with the relevant closed subschemes. We may therefore further assume that $X$ is complete. 

Lemma \ref{lem:embeddings-toroidal-map} gives a $G$-equivariant morphism $\pi:X\longrightarrow\X$.
The sections of Section~\ref{subsec:wonderful-distinguished-sections}
come from the restricting of
\[
\varphi_i\in E_{\overline{\omega_i}}
\cong H^0(\X,\CL_{-\overline{\omega_i}}).
\]
By Corollary~\ref{cor:si}, their zero schemes are
$Z(\varphi_i)=\X_{s_{\kappa(i)}}$.
Put $s=\prod_{i\in\I}\varphi_i$. Multiplication gives
$s\in E_{\overline\rho}$, and Proposition~\ref{prop:wc} identifies
\[
\CL_{-\overline\rho}\cong\widetilde\omega_{\X}^{-1}.
\]
If $\Y\subset\X$ is the closed orbit, put $\overline s=s|_{\Y}$.
By Proposition~\ref{prop:dv} and the proof of
Theorem~\ref{thm:WCsplit}, $\overline s^{p-1}$ defines the
closed-orbit splitting after a nonzero scalar normalization.
By Lemma \ref{lem:embeddings-toroidal-map}, the pullback
\[
\widetilde s=\pi^*s
\]
is a section of $\widetilde\omega_X^{-1}$. Let $s_{\partial X}$ be the canonical section of $\CO_X(\partial X)$ and set
\[
\sigma_X=s_{\partial X}\widetilde s\in H^0(X,\omega_X^{-1}).
\]
For every closed $G$-orbit $Y\subset X$, the restriction $\pi|_Y$ identifies $Y$ with $\Y$ and identifies $\widetilde s|_Y$ with $\overline s$. Hence $(\widetilde s|_Y)^{p-1}$ splits $Y$ after normalization by the proof of Theorem \ref{thm:WCsplit}.

The local expansion at a point of a closed $G$-orbit now shows that
\[
\sigma_X^{p-1}\in H^0(X,\omega_X^{1-p})
\]
defines a Frobenius splitting of $X$ after a nonzero scalar normalization. This is precisely the local argument in \cite{Tan18}*{Proposition 2.1(i)}; see also \cite{BK05}*{Theorem 1.3.8 and Exercise 1.3.E.4}. The factor $s_{\partial X}$ implies that every boundary divisor is compatibly split.

The explicit factorization of $s$ determines further compatibly split subschemes. Indeed, we have 
\[
\widetilde s=\prod_{i\in\I}\pi^*\varphi_i,
\]
and $Z(\sigma_X)$ is reduced and compatibly split by
\cite{BK05}*{Proposition~1.3.11}. On the open orbit, the support
of $Z(\pi^*\varphi_i)$ is $S_{s_{\kappa(i)}}$; this follows
from Corollary~\ref{cor:si} and the compatibility of the stratum
definitions with the adjoint quotient. Since $\kappa$ permutes
$\I$, every nonempty $X_{s_i}$ is an irreducible component of
$Z(\sigma_X)$ and is compatibly split.

We obtain compatibility with $X_u$ by Lemma~\ref{lem:prelim-strata-intersection}, standard combinatorics on Bruhat orders, and \cite{BK05}*{Proposition~1.2.1}.



Finally, when $X$ is smooth, the compatible splitting is relative to
\[
(p-1)\left(\partial X+\sum_{i\in\I}X_{s_i}\right)
\]
by \cite{BK05}*{Theorem 1.4.10}.
\end{proof}

\begin{remark}
For normal varieties, Frobenius splittings may be described on the regular locus using divisorial sheaves; see \cite{BK05}*{Remark~1.3.12}. Allowing effective Weil divisors, as in \cite{BK05}*{Remark~1.4.13}, Theorem~\ref{thm:main} can therefore be rephrased as the existence, for every equivariant embedding $X$, of a Frobenius splitting relative to $(p-1)\left(\partial X+\sum_{i\in\I}X_{s_i}\right)$, compatible with every boundary divisor and every $X_u$.
\end{remark}

\subsection{Strong $F$-regularity of $k[S_w]$}\label{subsec:bruhat-atlas}

Write $\mathcal B=G/B^+$. For $v,w\in W$, set
\[
\mathring{\mathcal B}_w=B^+\dot wB^+/B^+,
\qquad
\mathring{\mathcal B}^{v}=B^-\dot vB^+/B^+.
\]
Use the Richardson notation $\mathring{\mathcal R}_{v,w}$ and
$\mathcal R_{v,w}$ of Section~\ref{subsec:wonderful-distinguished-sections},
so $\mathring{\mathcal R}_{v,w}
=\mathring{\mathcal B}^{v}\cap\mathring{\mathcal B}_w$. 
In the formulas below, identify $S$ with $\iota(S)\subset G$.

Fix $v\in W$ and put
\[
N_v=\dot vU^-\dot v^{-1},\qquad N_v^\pm=N_v\cap U^\pm.
\]
Multiplication gives isomorphisms
\[
\begin{aligned}
N_v^+\times N_v^-&\xrightarrow{\sim}N_v,
& (g_+,g_-)&\longmapsto g_+g_-,\\
N_v^-\times N_v^+&\xrightarrow{\sim}N_v,
& (h_-,h_+)&\longmapsto h_-h_+.
\end{aligned}
\]
Define the factor projections by
\[
y=\sigma_{l,+}(y)\sigma_{r,-}(y)
 =\sigma_{l,-}(y)\sigma_{r,+}(y),\qquad y\in N_v,
\]
where $\sigma_{l,\pm},\sigma_{r,\pm}:N_v\to N_v^\pm$ are morphisms.
Here $l$ and $r$ indicate the left and right factors, and the dependence
on $v$ is suppressed. By \cite{KWY13}*{Lemma~2.2}, the pair of right
factor projections gives an isomorphism
\[
(\sigma_{r,+},\sigma_{r,-}):N_v\xrightarrow{\sim}N_v^+\times N_v^-.
\]
The open subsets $\dot vU^-B^+$, for $v\in W$, cover $G$;
see \cite{KWY13}*{proof of Theorem~1.1}.

Using these factor projections, we consider the isomorphism 
\begin{align*}
 \dot{v} U^- B^+ &\xrightarrow{\sim} \mathring{\mathcal{B}}^v \times B^+\dot{v} B^+,\\
 y \dot{v} b &\mapsto (\sigma_{r,-}(y)\dot{v} \cdot B^+, \sigma_{r,+}(y) \dot{v} b \dot{w}_\bullet^{-1} \theta(\sigma_{l,-}(y)) \dot{w}_\bullet).
\end{align*}

The factorization $y=\sigma_{l,-}(y)\sigma_{r,+}(y)$ gives
\[
\sigma_{r,+}(y) \dot{v} b \dot{w}_\bullet^{-1} \theta(\sigma_{l,-}(y)) \dot{w}_\bullet = \sigma_{l,-}(y)^{-1} \cdot y\dot{v} b \cdot \dot{w}_\bullet^{-1} \theta(\sigma_{l,-}(y)) \dot{w}_\bullet.
\]
Hence, by restriction, we obtain the isomorphisms 
\begin{equation}\label{eq:atlas1}
\mathring{S}_w \dot{w}_\bullet \cap \dot{v} U^-\dot{v}^{-1} \dot{v} B^+ \xrightarrow{\sim} \mathring{\mathcal{R}}_{v, ww_0} \times \mathring{S}_{vw_0} \dot{w}_\bullet,
\end{equation}
\begin{equation}\label{eq:atlas2}
{S}_w \dot{w}_\bullet \cap \dot{v} U^-\dot{v}^{-1} \dot{v} B^+ \xrightarrow{\sim} \bigcup_{w\le u\le w^\bullet}\mathring{\mathcal{R}}_{v,u w_0} \times \mathring{S}_{vw_0} \dot{w}_\bullet.
\end{equation}

\begin{remark}
The stratified isomorphisms \eqref{eq:atlas1} and \eqref{eq:atlas2} were suggested by Jeff York Ye. We thank him for helpful discussions.
\end{remark}

\begin{prop}\label{prop:RichardsonFregular}Assume $\operatorname{char}k=p>2$. 
Richardson varieties are globally $F$-regular. 
\end{prop}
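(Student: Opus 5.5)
We will verify condition (2) of Theorem~\ref{thm:gloFre} for $P=\mathcal R_{v,w}\subset\mathcal B$, $v\le w$. The plan has four steps: build an ample divisor, show it is stably split, show its complement is strongly $F$-regular, and then invoke the theorem. Richardson varieties are irreducible and projective, so the theorem applies. The one serious step is the third.

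\emph{Step 1: the divisor.} Fix a regular dominant weight $\lambda$, for instance $\lambda=\rho$. Take the section of $\CL_\lambda|_{\mathcal R_{v,w}}$ given by the product of the extremal functionals $p_\lambda^{v}\,p_\lambda^{w}$. Its zero locus is the boundary $\partial\mathcal R_{v,w}=\bigcup_{v\lessdot v'}\mathcal R_{v',w}\cup\bigcup_{w'\lessdot w}\mathcal R_{v,w'}$. Taking $\lambda=\rho$, this is a divisor $D$ of the ample bundle $\CL_{2\rho}$, with complement the open Richardson variety $\mathring{\mathcal R}_{v,w}$.

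\emph{Step 2: stable splitting along $D$.} We use the standard $B^+$-canonical splitting of $\mathcal B$, given by $\sigma^{p-1}$ with $\sigma=p_\rho^{e}p_\rho^{w_0}$ a section of $\omega_{\mathcal B}^{-1}=\CL_{2\rho}$. It is the splitting used in the proof of Theorem~\ref{thm:WCsplit} via \cite{BK05}*{Theorem~4.1.15}, and it compatibly splits all Schubert and opposite Schubert varieties, hence all $\mathcal R_{v,w}$. By \cite{BK05}*{Theorem~1.4.10} it is a splitting relative to $(p-1)\partial\mathcal B$, where $\partial\mathcal B$ is the union of the Schubert and opposite Schubert divisors. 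Restricting to $\mathcal R_{v,w}$ should give a splitting of $\mathcal R_{v,w}$ relative to $(p-1)D$. To justify this, we would argue as in Brion--Kumar's treatment of Schubert varieties: the residual boundary divisors $\mathcal B_{w'}$ with $w'\not\le w$ meet $\mathcal R_{v,w}$ only inside $D$, and Richardson varieties are normal, so the relative splitting descends. Once we have a $(p-1)D$-splitting with $p-1\ge1$, the variety is stably Frobenius split along $D$ in the sense of \cite{Smi00}*{Definition 3.5}.

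\emph{Step 3: strong $F$-regularity of the complement.} We must show that $\mathcal R_{v,w}\setminus D=\mathring{\mathcal R}_{v,w}$ is strongly $F$-regular. This is easy: open Richardson varieties are smooth by \cite{Ri92}, and smooth varieties are strongly $F$-regular. We expect this step to be harmless. The obstacle is rather in Step~2, namely checking carefully that restricting the ambient relative splitting to $\mathcal R_{v,w}$ really produces a splitting relative to a divisor whose support is exactly $D$. The support matters because the ambient boundary restricted to $\mathcal R_{v,w}$ might a priori have extra components or multiplicities.

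If that restriction argument proves awkward, we would instead apply Theorem~\ref{thm:gloFre}(2) on $\mathcal B$ itself to get a splitting relative to an ample divisor containing $\partial\mathcal B$. We would then use \cite{Smi00}'s result that global $F$-regularity passes to compatibly split subvarieties in the presence of such a relative splitting: a splitting of $\mathcal B$ relative to $D'$ that is compatible with $\mathcal R_{v,w}\not\subset D'$ induces a $D'|_{\mathcal R_{v,w}}$-splitting. Combined with Step~3, Theorem~\ref{thm:gloFre}(2) then gives global $F$-regularity of $\mathcal R_{v,w}$.
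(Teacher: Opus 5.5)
Your strategy matches the paper's: verify Theorem~\ref{thm:gloFre}(2) using an ample divisor supported on $\partial\mathcal R_{v,w}$ whose complement $\mathring{\mathcal R}_{v,w}$ is smooth. Steps~1 and~3 are fine. The gap is in Step~2, and it occurs in two places.

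First, the $(p-1)D$-splitting you aim for does not exist in general, because $D=\operatorname{div}(p^v_\rho p^w_\rho|_{\mathcal R_{v,w}})$ is usually not reduced. By Chevalley's formula, $\mathcal R_{v,ws_\beta}$ occurs in $D$ with multiplicity $\langle\rho,\beta^\vee\rangle$. For example, take $G=\mathrm{SL}_3$, $v=e$, $w=s_1s_2$: since $s_1s_2s_{\alpha_1+\alpha_2}=s_2$, the component $\mathcal R_{e,s_2}$ occurs with multiplicity $2$. A section of $\omega^{1-p}$ that defines a splitting vanishes to order at most $p-1$ along every prime divisor; this follows from the $(x_1\cdots x_n)^{p-1}$-coefficient criterion of \cite{BK05}*{Theorem~1.3.8} at a smooth point of the divisor. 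So no splitting can be relative to $(p-1)D$ here. The correct target is a splitting relative to $(p-1)$ times the \emph{reduced} boundary $\partial\mathcal R_{v,w}$. Iterating it gives a $(p^m-1)\partial\mathcal R_{v,w}$-splitting. Once $p^m-1$ is at least every multiplicity in $D$, restricting along $\CO(D)\hookrightarrow\CO((p^m-1)\partial\mathcal R_{v,w})$ gives the stable splitting along $D$. This is the role of \cite{BT06}*{Lemma~3.1} in the paper's proof.

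Second, even the reduced relative splitting cannot be obtained by restricting the ambient one. Unless $v=e$ and $w=w_0$, $\mathcal R_{v,w}$ lies in a component of $\partial\mathcal B$:
\begin{itemize}
\item if $w\ne w_0$, then $w$ has a right ascent $s_i$, so $w\le w_0s_i$ and $\mathcal R_{v,w}\subseteq\mathcal R_{e,w_0s_i}$;
\item if $v\ne e$, then $s_i\le v$ for some $i$, so $\mathcal R_{v,w}\subseteq\mathcal R_{s_i,w_0}$.
\end{itemize}
Hence your $\sigma$ vanishes identically on $\mathcal R_{v,w}$. Descent of a relative splitting requires that the subvariety not lie in the support of the divisor, so it does not apply. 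In particular, your claim that the residual boundary meets $\mathcal R_{v,w}$ only inside $D$ is false. Your fallback fails for the same reason, since any $D'\supseteq\partial\mathcal B$ contains $\mathcal R_{v,w}$.

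The missing input is that the splitting induced on $\mathcal R_{v,w}$ is itself a $(p-1)\partial\mathcal R_{v,w}$-splitting; the paper quotes this from \cite{KS}*{Proposition~5.3}. You could also prove it directly, as follows.
\begin{itemize}
\item Every boundary component $\mathcal R_{v',w}$ or $\mathcal R_{v,w'}$ is compatibly split by the induced splitting.
\item On the regular locus, \cite{BK05}*{Theorem~1.4.10} turns this into a splitting relative to $(p-1)\partial\mathcal R_{v,w}$.
\item Since $\mathcal R_{v,w}$ is normal and $\CO(D)$ is invertible, the resulting maps $F^m_*\CO(D)\to\CO$ extend across the singular locus, which has codimension at least two.
\end{itemize}
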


\begin{proof}
The case $v=w$ is immediate. Write
$\partial\mathcal{R}_{v,w}=\bigcup_iE_i$ as the union of its irreducible
components. The construction in \cite{KS}*{equation~(4) and
Section~4.5}, valid also in positive characteristic, gives an ample
effective Cartier divisor $D=\sum_i b_iE_i$ with $b_i>0$.
The Frobenius splitting in \cite{KS}*{Proposition~5.3}, together with
\cite{BT06}*{Lemma~3.1}, shows that $\mathcal{R}_{v,w}$ is stably
Frobenius split along $D$.
The complement $\mathring{\mathcal{R}}_{v,w}$ is smooth, hence strongly $F$-regular by \cite{HH89}*{Theorem~3.1(c)}.
The proposition follows from Theorem~\ref{thm:gloFre}(2).
\end{proof}

\begin{prop}\label{prop:DBFregular}
Assume $\operatorname{char}k=p>2$. For $w\le w^\bullet$, the coordinate ring $k[S_w]$ is strongly $F$-regular.
\end{prop}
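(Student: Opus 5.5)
Strong $F$-regularity is local, so the plan is to check it on an open cover of $S_w$ and, chart by chart, reduce it to strong $F$-regularity of Richardson varieties and of smooth varieties. Since $k[S_w]$ is $F$-finite and reduced ($S_w$ is compatibly split in $S$ by Theorem~\ref{thm:main}), it suffices to show that every local ring of $S_w$ is strongly $F$-regular. Identify $S_w$ with $\iota(S_w)\subset G$ and translate on the right by $\dot w_\bullet$, which is an isomorphism. The open sets $\dot vU^-B^+$, $v\in W$, cover $G$. Hence the sets $S_w\dot w_\bullet\cap\dot vU^-B^+$ cover $S_w\dot w_\bullet$, and it is enough to prove strong $F$-regularity of each of these opens.

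On each chart, use the isomorphism \eqref{eq:atlas2}. It identifies $S_w\dot w_\bullet\cap\dot vU^-B^+$ with
\[
\Bigl(\bigcup_{w\le u\le w^\bullet}\mathring{\mathcal R}_{v,uw_0}\Bigr)\times\mathring S_{vw_0}\dot w_\bullet .
\]
The first factor is $\mathcal R_{v,ww_0}\cap\mathring{\mathcal B}^v$. Indeed, $u\mapsto uw_0$ reverses Bruhat order, so $\{uw_0: w\le u\le w^\bullet\}=\{x: w_\bullet\le x\le ww_0\}$. The constraint $x\ge v$ comes from intersecting with $\mathring{\mathcal B}^v$. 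The remaining constraint $x\ge w_\bullet$ should be automatic on this chart once $\mathring S_{vw_0}\neq\emptyset$. By Lemma~\ref{lem:prelim-strata-intersection}(1), nonemptiness means $vw_0\le w^\bullet$, i.e.\ $v\ge w_\bullet$. So the first factor is an open subset of the Richardson variety $\mathcal R_{v,ww_0}$. If $vw_0\not\le w^\bullet$, the chart is empty and there is nothing to prove. The second factor is smooth by Lemma~\ref{lem:prelim-strata-intersection}(3).

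Proposition~\ref{prop:RichardsonFregular} says $\mathcal R_{v,ww_0}$ is globally $F$-regular. By Smith's results, a globally $F$-regular projective variety is strongly $F$-regular, i.e.\ all its local rings are strongly $F$-regular (its section ring is strongly $F$-regular, and localizing away from the vertex gives this). So every open subset of it is strongly $F$-regular. The smooth factor is strongly $F$-regular by \cite{HH89}. It remains to see that a product of a strongly $F$-regular variety with a smooth variety is strongly $F$-regular. Locally, $A\otimes_k B$ with $B$ smooth is étale-locally a polynomial ring over $A$. Strong $F$-regularity passes to polynomial extensions and is preserved under étale, or more generally smooth, morphisms, so the local rings of the product are strongly $F$-regular.

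The step I expect to be the main obstacle is checking that \eqref{eq:atlas2} really is an isomorphism of schemes onto the reduced variety $\mathcal R_{v,ww_0}\cap\mathring{\mathcal B}^v$ times the smooth factor, and not merely a bijection on strata. Set-theoretically this follows from \eqref{eq:atlas1} and Lemma~\ref{lem:prelim-strata-intersection}(2). Both sides are reduced: $S_w$ is compatibly split, hence reduced, and Richardson varieties are reduced. The map is the restriction of an ambient isomorphism $\dot vU^-B^+\cong\mathring{\mathcal B}^v\times B^+\dot vB^+$ to a closed subvariety. Its image is the closed reduced subvariety with the right underlying set, so the scheme structures match. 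I would also double-check the Bruhat-interval bookkeeping $x\ge w_\bullet$ on nonempty charts.
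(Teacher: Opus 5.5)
Your proposal is correct and follows the paper's proof. You cover $S_w\dot w_\bullet$ by the charts $\dot vU^-B^+$, use \eqref{eq:atlas2} to write each chart as an open subset of $\mathcal R_{v,ww_0}$ times the smooth stratum $\mathring S_{vw_0}\dot w_\bullet$, and then conclude from Proposition~\ref{prop:RichardsonFregular} and the locality of strong $F$-regularity. Your additional checks fill in details the paper leaves implicit: that $x\ge w_\bullet$ is automatic once $v\ge w_\bullet$, that the reduced scheme structures match under the ambient isomorphism, and that a strongly $F$-regular variety times a smooth one is strongly $F$-regular by smooth/\'etale ascent.
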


\begin{proof}
Recall that strong $F$-regularity is local by
\cite{HH89}*{Theorem~3.1(a)}. On each nonempty chart, the stratum $\mathring{S}_{vw_0}$ is
smooth by Lemma~\ref{lem:prelim-strata-intersection}, hence strongly
$F$-regular by \cite{HH89}*{Theorem~3.1(c)}.
Moreover,
\[
\bigcup_{w\leq u\leq w^\bullet}\mathring{\mathcal{R}}_{v,uw_0}
=\mathcal{R}_{v,ww_0}\cap\mathring{\mathcal{B}}^{v}
\]
is an open subset of the Richardson variety $\mathcal{R}_{v,ww_0}$,
so it is strongly $F$-regular by
Proposition~\ref{prop:RichardsonFregular} and
\cite{BT06}*{Section~3}.
By the isomorphism \eqref{eq:atlas2}, each chart of $S_w\dot w_\bullet$
is a product of these two strongly $F$-regular varieties,
and is therefore strongly $F$-regular.
These charts cover $S_w\dot w_\bullet$.
By locality, $k[S_w]$ is strongly $F$-regular.
\end{proof}
 
\subsection{Global \(F\)-regularity of stratum closures}\label{subsec:global-f-regularity}

\begin{proposition}\label{prop:AmpleDiv}
Assume that $\operatorname{char}k\neq2$. Let $X$ be a smooth projective
toroidal embedding of the symmetric space $S$, with boundary divisors
$D_1,\dots,D_n$. Then there exist positive integers $a_1,\dots,a_n$ such
that $\sum_{i=1}^n a_iD_i$ is an ample effective Cartier divisor.
\end{proposition}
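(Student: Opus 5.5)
The plan is to use the combinatorial description of line bundles on smooth complete toroidal embeddings via colored fans, which for toroidal embeddings means uncolored fans in the valuation cone. Since $X$ is toroidal, it has no colors. The ampleness criterion of Brion for spherical varieties then reduces the question to a piecewise-linear condition: a $B$-stable Cartier divisor $\delta=\sum_i a_iD_i+\sum_{\text{colors}}b_\alpha D_\alpha$ is ample exactly when its associated piecewise linear function on the fan is strictly convex and satisfies a positivity condition on the colors. I will work in $\breve Y\otimes\mathbb Q$. There the valuation cone is the antidominant Weyl chamber $-\mathcal C$ for the restricted root system, and the fan $\Sigma$ of $X$ covers $-\mathcal C$, because $X$ is complete and toroidal.

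The first step uses projectivity. Since $X$ is projective, some ample divisor exists. Every line bundle on $X$ is $B$-linearizable, up to passing to a finite cover of $G$, which is harmless for ampleness. Hence some ample divisor is linearly equivalent to a $B$-stable effective divisor $\delta=\sum_i a_iD_i+\sum_\alpha b_\alpha D_\alpha$, where the $D_\alpha$ are the colors, i.e.\ the closures of the $B$-stable prime divisors of $S$.

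The second step removes the colors. Each color $D_\alpha$ of $S$ is the zero divisor on $S$ of a $B$-eigenfunction. Concretely, the $\chi_\lambda$ of Section~\ref{sec:enveloping-variety}, transported to $B^+$-eigenfunctions, give for a suitable spherical dominant $\lambda$ a rational function $f$ with
\[
\operatorname{div}(f)=\sum_\alpha \langle\rho_\alpha,\lambda\rangle D_\alpha-\sum_i\langle v_i,\lambda\rangle D_i,
\]
where $v_i\in-\mathcal C$ is the primitive generator of the ray for $D_i$. Choosing $\lambda$ regular spherical and replacing $\delta$ by $\delta-\operatorname{div}(f^{m})$, with $m$ large so that the color coefficients are killed, produces a linearly equivalent divisor supported on the boundary. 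Its coefficients are $a_i+m\langle v_i,\lambda\rangle$. Since $v_i$ lies in the antidominant chamber and $\lambda$ is dominant, we have $\langle v_i,\lambda\rangle\le 0$. This sign is wrong, so I will instead use $f^{-m}$ and first add colors to make them large. Equivalently, the strictly convex function attached to $\delta$ on $-\mathcal C$ may be modified by a linear function $\lambda$. The correct move is to choose the linear function so that the resulting function vanishes on colors. The boundary coefficients are then the values of the strictly convex piecewise-linear function $\ell$ at the $v_i$, shifted by $\langle v_i,\lambda\rangle$. With suitable conventions, strict convexity together with the fact that $-\mathcal C$ is a strictly convex cone (nondegeneracy, plus completeness of $X$) forces all these values to be positive once $\lambda$ is adjusted.

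The third step turns this into the statement. Brion's criterion preserves ampleness under linear equivalence, so $\sum a_i'D_i$ with all $a_i'>0$ is ample, effective, and Cartier, since $X$ is smooth.

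The main obstacle is the sign bookkeeping in the second step: showing that one can pick the linear correction so that the color coefficients become zero while every boundary coefficient stays strictly positive. I would try first to argue directly with the strictly convex support function $h$ of an ample $B$-stable divisor on the complete fan supported on $-\mathcal C$. Subtracting the linear function that agrees with $h$ on the color images $\rho_\alpha$ (these are the simple restricted coroots) leaves a function that is still strictly convex, vanishes on the coroots, and must therefore be strictly negative, in the paper's sign convention, at every ray in the interior of the valuation cone and at its walls. A fallback is \cite{Tan18}, or \cite{DCS99} for $\X$ pulled back via Lemma~\ref{lem:embeddings-toroidal-map} and combined with an ample divisor on the toric fibres.
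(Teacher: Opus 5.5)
\textbf{Gap 1: removing the colours.} Your colour-removal step fails as stated. A colour of $S$ need not be the divisor of a $B$-eigenfunction, even rationally, and distinct colours can have the same image in $\breve Y$. Take the paper's own example in Remark~\ref{rem:adjoint-spherical-lattice}: $G=\mathrm{SL}_2$, $\theta(g)=(g^t)^{-1}$, so $K$ is a maximal torus and $S\cong(\mathbb P^1\times\mathbb P^1)\setminus\Delta_{\mathbb P^1}$. Then:
\begin{itemize}
\item $S$ has two colours, with closures $D^+=\{\infty\}\times\mathbb P^1$ and $D^-=\mathbb P^1\times\{\infty\}$, and $\rho(D^+)=\rho(D^-)$;
\item $\operatorname{Pic}(S)\cong\BZ$ is generated by $[D^+]$.
\end{itemize}
On the smooth projective toroidal embedding $X=\mathbb P^1\times\mathbb P^1$, the divisor $D^++2D^-$ is ample. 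Yet no character matches its colour coefficients, and it is not linearly equivalent to any multiple of $\Delta_{\mathbb P^1}$, the only boundary divisor. So ``the linear function that agrees with $h$ on the colour images'' need not exist.

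The missing observation is this. Since $X$ is toroidal, no colour contains a $G$-orbit, so colours enter Brion's criterion only through the one-sided inequalities $b_D>\langle\rho(D),f_\sigma\rangle$. Hence colour coefficients may be raised freely without losing ampleness. Fix a spherical weight $\lambda$ with $\langle\rho(D),\lambda\rangle>0$ for every colour $D$. For $m\gg0$, raise each $b_D$ to $m\langle\rho(D),\lambda\rangle$, then subtract $m\operatorname{div}(f_\lambda)$.

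You abandoned your first attempt because of a sign slip. The correct formula is $\operatorname{div}(f_\lambda)=\sum_D\langle\rho(D),\lambda\rangle D+\sum_i\langle v_i,\lambda\rangle D_i$, with the same sign on colours and boundary divisors. Indeed, a dominant eigenfunction regular on $S$ must have poles along $\partial X$ when $X$ is complete. So the new boundary coefficients are $a_i-m\langle v_i,\lambda\rangle$, which is what you want.

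\textbf{Gap 2: positivity of the boundary coefficients.} Positivity now needs $\langle v_i,\lambda\rangle<0$ for every ray, hence a strictly convex valuation cone. Your justification of that is wrong. Completeness of $X$ does not affect $\mathcal V$, and nondegeneracy in this paper only means $\theta_{\mathrm{ad}}\neq1$. If $Z(G)$ contains a $\theta$-split torus, as for $G=\mathrm{GL}_n$ with $\theta(g)=(g^t)^{-1}$, then $\mathcal V$ contains the line on which all spherical roots vanish.

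Concretely, take $G=\mathbb G_m\times\mathrm{SL}_2$ with $\theta(z,g)=(z^{-1},(g^t)^{-1})$, so $S\cong\mathbb G_m\times\bigl((\mathbb P^1\times\mathbb P^1)\setminus\Delta_{\mathbb P^1}\bigr)$, and let $X=(\mathbb P^1)^3$. The colour-free divisor $-D_0+2D_\infty+\mathbb P^1\times\Delta_{\mathbb P^1}$ is ample, of class $(1,1,1)$, but has coefficients of both signs. So matching the colour data does not force a fixed sign. You need an extra correction by a character that vanishes on all colour images and lies in the interior of the polytope cut out by $h$ on the lineality space. That polytope has nonempty interior because the fan restricted to that space is complete and $h$ is strictly convex. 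Nothing in your proposal supplies this step.

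Even when $\mathcal V$ is strictly convex, positivity does not come from ``strict convexity of $h$ plus vanishing on coroots''; $h$ is only defined on $\mathcal V$, which contains no coroot. It comes from the fact that a weight strictly dominant for the restricted root system is a positive combination of the simple spherical roots. Such a weight therefore pairs strictly negatively with every nonzero vector of the antidominant chamber. This is exactly the sign bookkeeping you left open.

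\textbf{Comparison with the paper.} The fallback in your last sentence is essentially the paper's route. The paper takes a $\pi$-ample divisor $E=\sum b_iD_i$ coming from an ample toric-boundary divisor on $Z$ in the chart $\pi^{-1}(\X_0)\simeq U_P^-\times Z$. It adds $N\pi^*H$, where $H$ is the determinant divisor on $\X$ with support $\partial\X$. Finally it passes to $\sum_iD_i+mA$ with $A=E+N\pi^*H$. That route never meets colours or the lineality issue, since central directions are absorbed into the toric variety $Z$. It also avoids having to justify Brion's criterion in characteristic $p>2$.
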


\begin{proof}
Let $\X$ be the wonderful compactification of $S_{\mathrm{ad}}$.
By Lemma~\ref{lem:embeddings-toroidal-map}, the canonical map
$S\to S_{\mathrm{ad}}$ extends to a $G$-equivariant morphism
\[
\pi:X\longrightarrow\X.
\]
The local structure results used below apply to symmetric spaces in
characteristic different from $2$; see \cite{DCS99}*{Proposition~3.8 and
Theorem~3.9} and \cite{Tan18}*{Section~3, pp.~502--503}.

Choose the standard affine chart $\X_0=\X^-\subseteq\X$ of
Section~\ref{subsec:wonderful-closed-orbit}. The compatible
local structure theorems give
\begin{equation}\label{eq:ample-boundary-chart}
\X_0\simeq U\times\BA^{r_{\mathrm{ad}}},
\qquad
X_0:=\pi^{-1}(\X_0)\simeq U\times Z,
\end{equation}
where $U=U_P^-$, $r_{\mathrm{ad}}$ is the rank of
$S_{\mathrm{ad}}$, and $Z$ is a toric variety with dense torus
$\breve T$ whose fan is the fan of $X$. The boundary divisors of $X$
correspond bijectively to the torus-invariant prime divisors
$Z_1,\dots,Z_n$ of $Z$, with
\begin{equation}\label{eq:ample-boundary-components}
D_i\cap X_0=U\times Z_i.
\end{equation}
For the equality of fans and the boundary correspondence, see
\cite{Tan18}*{Lemma~1.2(i)}; the inverse-image description in
\eqref{eq:ample-boundary-chart} is used in
\cite{Tan18}*{proof of Theorem~2.1(i), p.~501}.
The $G$-translates of $\X_0$ cover $\X$.

Since $Z$ is closed in the open subvariety $X_0$ of the projective
variety $X$, it is quasiprojective. A quasiprojective toric variety
has an effective ample Cartier divisor supported on its toric boundary:
this follows from the invariant-divisor description of its Picard group
and the character decomposition of global sections in
\cite{CLS11}*{Theorem~4.2.1 and Proposition~4.3.3}, applied to a very
ample line bundle. These descriptions are valid over any algebraically
closed field; the same consequence is used in
\cite{Tan18}*{proof of Theorem~2.1(i), p.~501}.
Choose such a divisor
\[
E_Z=\sum_{i=1}^n b_iZ_i,\qquad b_i\in\BZ_{\geq0},
\]
and set $E=\sum_i b_iD_i$. Since $X$ is smooth, $E$ is Cartier.
By \eqref{eq:ample-boundary-components},
\[
\CO_X(E)|_{X_0}\simeq\operatorname{pr}_Z^*\CO_Z(E_Z).
\]
This sheaf is ample, since $\operatorname{pr}_Z:U\times Z\to Z$ is
affine and pullback by an affine morphism preserves ampleness
\cite{Stacks}*{Tag~0892(2)}. Since $E$ is $G$-stable and
$\pi^{-1}(g\X_0)=gX_0$, its restriction to $\pi^{-1}(g\X_0)$ is
ample for every $g\in G(k)$. Relative ampleness is local on the base
\cite{Stacks}*{Tag~01VJ}; hence $E$ (on $X$) is $\pi$-ample.

We next choose an ample effective Cartier divisor $H$ on $\X$ whose
support is $\partial\X$. Use the projective realization
\[
\X=\overline{G_{\mathrm{ad}}\cdot[h]}
\subseteq\BP\bigl(\operatorname{Hom}_k({}^{\theta_{\mathrm{ad}}}V,V)\bigr)
\]
of \cite{DCS99}*{construction preceding Lemma~3.4 and Theorem~3.9}.
The open orbit consists of invertible maps, while every boundary point
represents a singular map; this is stated in
\cite{DCS99}*{proof of Theorem~3.9(ii), p.~287}.
Here ${}^{\theta_{\mathrm{ad}}}V$ is the twist of $V$ by
$\theta_{\mathrm{ad}}$, and $h$ is the canonical linear isomorphism.
Thus the determinant restricts to a nonzero section
$\delta\in H^0(\X,\CO_\X(d))$, where $d=\dim V$, and
\[
H=\operatorname{div}_{\X}(\delta),
\qquad
\operatorname{Supp}(H)=\partial\X,
\qquad
\CO_\X(H)\simeq\CO_\X(d).
\]
In particular, $H$ is an ample effective Cartier divisor.

The pullback $\pi^*H$ is an effective Cartier divisor, since its defining
section is nonzero on the dense open orbit $S$. Its support is contained
in $\partial X$, because $\pi(S)\subseteq S_{\mathrm{ad}}$.
Since $E$ is $\pi$-ample and $H$ is ample, there exists an integer
$N\geq0$ such that
\[
A=E+N\pi^*H
\]
is ample by \cite{Stacks}*{Tag~0892(1)}. Hence
$A=\sum_i c_iD_i$ with $c_i\in\BZ_{\geq0}$.

It remains to make every coefficient of $A$ positive. Put $\Delta=\sum_iD_i$,
which is Cartier because $X$ is smooth. For $m\gg0$, the sheaf
$\CO_X(\Delta+(m-1)A)$ is globally generated
\cite{Stacks}*{Tag~01Q3(7)}. Tensoring with the ample sheaf
$\CO_X(A)$ shows that $\CO_X(\Delta+mA)$ is ample
\cite{Stacks}*{Tag~0890}. Therefore
\[
D=\Delta+mA=\sum_{i=1}^n(1+mc_i)D_i
\]
is an ample effective Cartier divisor with all coefficients strictly
positive, as required.
\end{proof}

\begin{theorem}\label{thm:global-f-regular-strata}
Assume that $\operatorname{char}k=p>2$. Let $X$ be a projective
equivariant embedding of $S$. Then $X_u$ is globally $F$-regular for
every $u\leq w^\bullet$.
\end{theorem}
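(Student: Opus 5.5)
The strategy is to apply Theorem~\ref{thm:gloFre}(2) on a smooth projective toroidal model, and then push the result down to $X$.

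First, the reduction. By \cite{Tan18}*{Theorem~2.1(i)} and a nonsingular toroidal completion as in the proof of Theorem~\ref{thm:main}, choose a $G$-equivariant projective birational morphism $q:\widetilde X\to X$ with $\widetilde X$ a smooth projective toroidal embedding. Then $q(\widetilde X_u)=X_u$, since both are closures of $\mathring S_u$. Global $F$-regularity descends along a surjective morphism $f$ with $f_*\CO=\CO$ \cite{Smi00}. We therefore need $q_*\CO_{\widetilde X_u}=\CO_{X_u}$. I expect to get this from the compatible splitting: $\widetilde X_u$ is compatibly split, so \cite{BK05}*{Lemma~1.1.8} applies, combined with normality of $X_u$. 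If normality is not yet available, I would instead argue via strong $F$-regularity of $X_u$ from the local charts (Corollary~\ref{cor:strong-f-regular-strata}). So we may assume $X$ is smooth, projective and toroidal.

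Second, the ample divisor. By Proposition~\ref{prop:AmpleDiv}, choose $D=\sum a_iD_i$ with every $a_i>0$, ample and effective. Its restriction $D_u:=D|_{X_u}$ is an ample effective Cartier divisor on $X_u$, because $X_u\not\subset\partial X$. The complement is $X_u\setminus D_u=X_u\cap S=S_u$. By Proposition~\ref{prop:DBFregular}, $S_u$ is strongly $F$-regular.

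Third, stable splitting along $D_u$. This is the main obstacle. By Theorem~\ref{thm:main}(1), $X$ is split relative to $(p-1)(\partial X+\sum_iX_{s_i})$, compatibly with $X_u$ and with each $D_i$. Choose $e$ with $p^e-1\ge \max_i a_i$. Composing the splitting $e$ times, as in \cite{BT06}*{Lemma~3.1}, gives a splitting of $X$ relative to $(p^e-1)\partial X\ge D$ that remains compatible with $X_u$. The key point is that this splitting restricts to a map $F^e_*\CO_{X_u}(D_u)\to\CO_{X_u}$ retracting the canonical section. To check that the restriction works, I would argue as follows: the relative splitting is $\psi\circ F^e_*(s_D\cdot)$, where $\psi$ is the induced map on the twisted sheaf, and $\psi$ must map the ideal of $X_u$ into itself. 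The obstacle is that compatibility with $X_u$ is known only for the composite, not for $\psi$. To handle this, I would use that $X_u\not\subset D_i$ and invoke \cite{BK05}*{Remark~1.4.13 and Exercise~1.3.E.3}, in the same way \cite{BT06}*{Section~3} treats large Schubert varieties. Alternatively, one can restrict first: $X_u$ is split by the induced map. Because the section $\sigma_X^{p-1}$ vanishes on $(p-1)\partial X$ and $D_u$ is supported in $\partial X\cap X_u$, the induced splitting of $X_u$ factors through $F_*\CO_{X_u}((p-1)\partial X|_{X_u})$ on the normal variety $X_u$, using divisorial sheaves. This gives the stable splitting of $X_u$ along $D_u$.

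Finally, Theorem~\ref{thm:gloFre}(2) gives that $X_u$ is globally $F$-regular, and the descent in the first step transfers this conclusion to the original $X$.
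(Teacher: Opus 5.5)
The smooth projective toroidal case is the paper's argument: Proposition~\ref{prop:AmpleDiv}, iterating the splitting relative to $(p-1)\partial X$, restricting to $X_u$, and Theorem~\ref{thm:gloFre}(2) with $S_u$ strongly $F$-regular. Your compatibility worry in the third step is resolved by the observation you point to. $X_u$ is irreducible and $X_u\not\subset\partial X$, so for a local equation $g$ of $\partial X$ one has $\CI_{X_u}\cap g\,\CO_X=g\,\CI_{X_u}$, and the twisted map is compatible with $X_u$. This is also why you must twist only by $\partial X$: every $X_u$ with $u\neq e$ lies in some $X_{s_i}$.

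The genuine gap is the descent from $\widetilde X$ to $X$. Since $q$ is an isomorphism over $S$, the map $q_u\colon\widetilde X_u\to X_u$ is proper and birational. Moreover $\widetilde X_u$ is normal, being globally $F$-regular by the smooth case. Hence $(q_u)_*\CO_{\widetilde X_u}$ is the structure sheaf of the normalization of $X_u$, and the identity $(q_u)_*\CO_{\widetilde X_u}=\CO_{X_u}$ you need is \emph{equivalent} to normality of $X_u$. So ``combined with normality of $X_u$'' assumes exactly what is at stake. Lemma~1.1.8 of \cite{BK05} only makes $X_u$ compatibly split, hence reduced, and this does not give normality.

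Your fallback is circular. Corollary~\ref{cor:strong-f-regular-strata} is deduced in the paper from this very theorem, via Sumihiro's theorem. The charts \eqref{eq:atlas2} cover only $S_u\subset S$, so they say nothing at points of $X_u\cap\partial X$. The ``alternative'' in your third step uses divisorial sheaves on $X_u$ as if it were already normal, so it should be dropped for the same reason.

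The missing idea is to use the ample stable splitting you already built on $\widetilde X$ for the descent itself. That splitting is along an ample effective divisor supported on $\partial\widetilde X$, and it is compatible with $\widetilde X_u$, which is not contained in the divisor. The paper invokes \cite{BT06}*{Lemma~3.2} to get $(q_u)_*\CO_{\widetilde X_u}=\CO_{X_u}$, and only then pushes global $F$-regularity down by \cite{BT06}*{Corollary~3.2}.

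Concretely, take an ample $\mathcal M$ on $X$. The compatible stable splitting along an ample divisor gives $H^1(\widetilde X,\CI_{\widetilde X_u}\otimes q^*\mathcal M^n)=0$, even though $q^*\mathcal M^n$ is only nef. Hence $H^0(X,\mathcal M^n)\to H^0(X_u,(q_u)_*\CO_{\widetilde X_u}\otimes\mathcal M^n)$ is surjective. For $n\gg0$, global generation then forces $\CO_{X_u}=(q_u)_*\CO_{\widetilde X_u}$. This is exactly where ampleness of the divisor is used; a mere compatible Frobenius splitting of $\widetilde X$ would not give this vanishing for the non-ample $q^*\mathcal M^n$.
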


\begin{proof}
Suppose first that $X$ is smooth and toroidal. By
Proposition~\ref{prop:AmpleDiv}, there is an ample effective Cartier
divisor
\[
D=\sum_{i=1}^n a_iD_i,\qquad a_i>0,
\]
with $\operatorname{Supp}(D)=\partial X$. Let $\varphi$ be the
splitting constructed in Theorem~\ref{thm:main}. It splits $X$
relative to $(p-1)\partial X$ and compatibly splits every $X_u$.
Iterating this relative splitting gives a stable splitting along
$(p^e-1)\partial X$. Choose $e$ such that $a_i\leq p^e-1$ for all
$i$, and restrict along
\[
\CO_X(D)\hookrightarrow\CO_X((p^e-1)\partial X).
\]
We obtain a stable splitting along $D$ whose induced Frobenius
splitting still compatibly splits every $X_u$; see
\cite{BT06}*{Lemma~3.1(4)--(5)} and
\cite{BK05}*{proof of Lemma~1.4.11}.

Since $X_u$ meets $S$, it is not contained in $\operatorname{Supp}(D)$.
Consequently, $D|_{X_u}$ is an ample effective Cartier divisor, and
the stable splitting descends to a stable splitting of $X_u$ along
$D|_{X_u}$ by \cite{BK05}*{Proposition~1.4.3}, applied to the
iterated Frobenius. Moreover,
\[
X_u\setminus\operatorname{Supp}(D|_{X_u})=X_u\cap S=S_u,
\]
which is strongly $F$-regular by 
Proposition~\ref{prop:DBFregular}. Theorem~\ref{thm:gloFre}(2)
therefore implies that $X_u$ is globally $F$-regular.

For an arbitrary projective embedding $X$, choose a projective
$G$-equivariant resolution
\[
q:\widetilde X\longrightarrow X
\]
with $\widetilde X$ smooth and toroidal, as in
\cite{Tan18}*{Theorem~2.1(i)}. Then $\widetilde X$ is projective and
$q_*\CO_{\widetilde X}=\CO_X$, since $X$ is normal. Let
$\widetilde X_u$ be the closure of $S_u$ in $\widetilde X$, and write
$q_u:\widetilde X_u\to X_u$ for the restriction of $q$. The preceding
argument gives a stable splitting of $\widetilde X$ along an ample
effective Cartier divisor supported on $\partial\widetilde X$,
compatible with $\widetilde X_u$. Since this divisor does not contain
$\widetilde X_u$, \cite{BT06}*{Lemma~3.2} gives
\[
(q_u)_*\CO_{\widetilde X_u}=\CO_{X_u}.
\]
The variety $\widetilde X_u$ is globally $F$-regular by the smooth
toroidal case. Hence $X_u$ is globally $F$-regular by
\cite{BT06}*{Corollary~3.2}.
\end{proof}

\begin{corollary}\label{cor:strong-f-regular-strata}
Assume that $k$ is algebraically closed of characteristic $p>2$.
Let $X$ be an equivariant embedding of $S$. Then $X_u$ is strongly
$F$-regular for every $u\leq w^\bullet$.
\end{corollary}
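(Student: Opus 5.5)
The plan is to reduce the statement to the projective case, Theorem~\ref{thm:global-f-regular-strata}, using two facts: strong $F$-regularity is local, and it passes to open subsets and to direct images.

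\emph{Step 1: reduce to an open subset of a projective embedding.} Strong $F$-regularity is a local property by \cite{HH89}*{Theorem~3.1(a)}, so it suffices to cover $X_u$ by open subsets that are strongly $F$-regular. By Sumihiro's theorem, the normal $G$-variety $X$ is covered by $G$-stable quasi-projective open subsets. Each such open $X'$ is again an equivariant embedding of $S$, and $X'_u=X_u\cap X'$. So we may assume that $X$ is quasi-projective.

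\emph{Step 2: pass to a projective resolution and embed it in a completion.} As in the proof of Theorem~\ref{thm:main}, take a projective $G$-equivariant resolution $q:\widetilde X\to X$ from \cite{Tan18}*{Theorem~2.1(i)}, with $\widetilde X$ smooth, quasi-projective and toroidal. By \cite{Tim07}*{Section 29.2} or \cite{Kno91}, $\widetilde X$ embeds as a $G$-stable open subset of a smooth complete toroidal embedding $\overline X$. I will take $\overline X$ projective, which the toroidal theory allows via a projective refinement of the fan, that is, a fan with a strictly convex support function. Theorem~\ref{thm:global-f-regular-strata} then says $\overline X_u$ is globally $F$-regular. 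By \cite{Smi00}, globally $F$-regular implies strongly $F$-regular. Since $\widetilde X_u=\overline X_u\cap\widetilde X$ is open in $\overline X_u$, the variety $\widetilde X_u$ is strongly $F$-regular.

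\emph{Step 3: descend along $q$.} Let $q_u:\widetilde X_u\to X_u$ be the restriction of $q$. I will show $(q_u)_*\CO_{\widetilde X_u}=\CO_{X_u}$ by the same argument as in Theorem~\ref{thm:global-f-regular-strata}, applying \cite{BT06}*{Lemma~3.2} to the stable splitting of $\widetilde X$ along a divisor supported on the boundary that is compatible with $\widetilde X_u$. Alternatively, $X_u$ is normal because Theorem~\ref{thm:main} makes it Frobenius split and compatibly split, combined with the affine case Proposition~\ref{prop:DBFregular} on $S_u$; I would use \cite{BT06}*{Lemma~3.2} rather than rely on this. Strong $F$-regularity then descends: if $c\in\CO_{X_u}$ is nonzero locally, then $q_u^*c$ is nonzero, and a splitting of $\CO_{\widetilde X_u}\to F^e_*\CO_{\widetilde X_u}$, $1\mapsto c$, pushes forward to a splitting on $X_u$. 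This is a standard local version of \cite{BT06}*{Corollary~3.2}, carried out over affine opens of $X_u$.

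\emph{Main obstacle.} The hardest step is the projective completion in Step~2, since \cite{Tan18} and Theorem~\ref{thm:main} only provide complete embeddings. If a projective refinement is awkward, I would avoid it by working locally instead. On a smooth toroidal $\widetilde X$, I would apply the relative-splitting argument directly to the affine neighbourhoods $\pi^{-1}(g\X_0)\cong U\times Z$ from the proof of Proposition~\ref{prop:AmpleDiv}. Choosing $Z$ affine toric, the boundary divisor of $U\times Z$ is principal, cut out by a character function. The splitting of Theorem~\ref{thm:main} relative to $(p-1)\partial X$ then gives, on this affine chart, a splitting of $1\mapsto f$ for that boundary function $f$, compatible with $X_u$. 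Combined with strong $F$-regularity of $S_u$ from Proposition~\ref{prop:DBFregular}, the criterion of \cite{HH89} (a single test element whose inverting gives a strongly $F$-regular ring and which splits) yields strong $F$-regularity of the chart of $\widetilde X_u$.
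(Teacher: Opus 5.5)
\paragraph{Gap in Step~3.}
Step~3 rests on a false principle. You assume that if $q_u\colon\widetilde X_u\to X_u$ is proper with $(q_u)_*\mathcal{O}_{\widetilde X_u}=\mathcal{O}_{X_u}$ and $\widetilde X_u$ is strongly $F$-regular, then $X_u$ is strongly $F$-regular. This fails in general.

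\begin{itemize}
\item Strong $F$-regularity of $\widetilde X_u$ is a condition on local rings. It gives splittings of $1\mapsto c$ only over affine opens of $\widetilde X_u$.
\item To push a splitting forward over an affine $V\subset X_u$, you need one defined on all of $q_u^{-1}(V)$. That set is proper over $V$, not affine, and local splittings need not glue.
\item Counterexample: take the affine cone over an elliptic curve, built from the full section ring of an ample line bundle $L$. It is normal, and it is resolved by the total space of $L^{-1}$, a smooth variety $W$ with $f_*\mathcal{O}_W=\mathcal{O}$. Yet the cone is not strongly $F$-regular.
\end{itemize}

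\cite{BT06}*{Corollary~3.2} works only because global $F$-regularity of a \emph{projective} variety supplies global splittings along every effective divisor. Your fallback in the last paragraph has the same defect: it gives strong $F$-regularity of affine charts of $\widetilde X_u$ and still needs this descent.

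Step~3 could be repaired, but it takes extra work you have not done:
\begin{itemize}
\item use global $F$-regularity of $\overline X_u$ to build splittings directly on the open set $q_u^{-1}(V)\subset\overline X_u$;
\item produce a $q$-ample boundary divisor, which \cite{BT06}*{Lemma~3.2} needs to give $(q_u)_*\mathcal{O}=\mathcal{O}$.
\end{itemize}

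\paragraph{The direct route.}
The resolution causes the trouble and can simply be dropped. After Step~1, $X$ is a normal quasi-projective $G$-variety. Sumihiro's theorem embeds it equivariantly in a projective space, and normalizing its closure gives a projective equivariant embedding $\overline X$ containing $X$ as a $G$-stable open subset. Theorem~\ref{thm:global-f-regular-strata} holds for arbitrary projective embeddings; it already carries out the resolution and descent internally, in the projective setting where \cite{BT06}*{Corollary~3.2} applies. Hence $\overline X_u$ is globally $F$-regular, so strongly $F$-regular, and $X_u=\overline X_u\cap X$ is open in it. This is exactly the paper's proof.

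\paragraph{Remark on Step~2.}
The direct route also removes a problem in Step~2. A projective refinement of a complete fan may subdivide cones of the fan of $\widetilde X$, so the result need not contain $\widetilde X$ as an open subset. What actually supplies a projective completion is Sumihiro's theorem, and the completion need not be smooth or toroidal.
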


\begin{proof}
By Sumihiro's theorem \cite{Bri17}*{Theorem~1 and Corollary~2.14},
$X$ has a finite $G$-stable open cover by equivariant embeddings
which are open subvarieties of projective equivariant embeddings.
As in \cite{BT06}*{proof of Corollary~4.2}, the assertion follows
from Theorem~\ref{thm:global-f-regular-strata} and the locality of
strong $F$-regularity \cite{HH89}*{Theorem~3.1(a)}.
\end{proof}

\subsection{The characteristic-zero case}\label{subsec:characteristic-zero}

Assume that $k$ is algebraically closed of characteristic zero.
We use strongly and globally $F$-regular type as in
\cite{BT06}*{Section~4.4}; see also \cite{Smi00}*{Definition~5.1}.

\begin{theorem}\label{thm:characteristic-zero-f-regular}
Let $X$ be an equivariant embedding of $S$. Then $X_u$ is of
strongly $F$-regular type for every $u\leq w^\bullet$.
If $X$ is projective, then each $X_u$ is of globally $F$-regular type.
\end{theorem}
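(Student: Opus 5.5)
The plan is a standard spreading-out argument, reducing to the positive-characteristic results Proposition~\ref{prop:DBFregular}, Corollary~\ref{cor:strong-f-regular-strata} and Theorem~\ref{thm:global-f-regular-strata}. The first step is to build a model over a finitely generated $\BZ[1/2]$-algebra $A\subset k$. Over $A$ we take models of the following data: the split reductive group $G$; the involution $\theta$; a $\theta$-stable maximal torus $T$ and the Borel subgroups $B^\pm$ chosen as in Section~\ref{subsec:prelim-symmetric-pairs}; representatives $\dot w\in N_G(T)$; the subgroup $K=G^\theta$; the embedding $X$; and the closures $X_u$. We can take $G$, $T$, $B^\pm$ and the Weyl representatives to be defined over $\BZ$, since $G$ is split. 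The involution $\theta$ is determined by the Satake datum and a finite amount of torus data, so after adjoining finitely many elements and inverting $2$ it is defined over $A$. The symmetric space $G_A/K_A$, the map $\iota$, and the strata $\mathring S_{u,A}=\iota^{-1}(B^+\dot uB^-\dot w_0\dot w_\bullet)$ are then defined over $A$. We also take a model $X_A$ of $X$, projective over $A$ if $X$ is, with $G_A$-action and containing $S_A$ as an open orbit. Finally, $X_{u,A}$ is defined as the scheme-theoretic closure of $\mathring S_{u,A}$ in $X_A$.

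The second step shrinks $\operatorname{Spec}A$ by generic flatness and constructibility, so that for every closed point $\mathfrak m$ the fibre is well behaved. We need $\kappa=A/\mathfrak m$ to have characteristic $p>2$. The base change $X_u\otimes\bar\kappa$ should be the corresponding large double Bruhat variety of $X_{\bar\kappa}$, that is, the closure of $\mathring S_{u,\bar\kappa}$. Moreover, $X_{\bar\kappa}$ should be a normal equivariant embedding of $S_{\bar\kappa}$, projective if $X$ is.

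This step is where I expect the main obstacle. The key input is that the fibres of $\mathring S_{u,A}$ are nonempty, smooth and connected of constant dimension $\ell(w^\bullet)-\ell(u)+r$, by Lemma~\ref{lem:prelim-strata-intersection}(3) applied in each characteristic different from $2$. Given this, the closure commutes with reduction on a dense open set, because the stratification in Lemma~\ref{lem:prelim-strata-intersection}(2) holds uniformly. To check that reduction preserves normality and the open-orbit condition, we use Tange's uniform description of toroidal resolutions in odd characteristic. Alternatively, it suffices that normality of a generic fibre spreads to an open set of fibres, which follows from EGA IV.

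The third step identifies the reductions. Once the closed fibres are identified with large double Bruhat varieties in characteristic $p$, they are strongly $F$-regular by Corollary~\ref{cor:strong-f-regular-strata}. In the projective case they are globally $F$-regular by Theorem~\ref{thm:global-f-regular-strata}. By the definitions in \cite{BT06}*{Section~4.4} and \cite{Smi00}*{Definition~5.1}, requiring this for all closed points of a dense open subset of $\operatorname{Spec}A$ is exactly the statement that $X_u$ is of strongly, respectively globally, $F$-regular type. For the global version we should also record an ample line bundle on $X_A$ whose reduction is ample. The consequences stated in Corollary~\ref{cor:characteristic-zero-singularities} then follow from \cite{Smi00}: normality, the Cohen--Macaulay property and rational singularities.
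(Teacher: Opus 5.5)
Your proposal is correct and follows essentially the same route as the paper: you spread out to a model over a finitely generated $\BZ[1/2]$-algebra, identify the reductions at the closed points of a dense open subset with large double Bruhat varieties of normal (projective) equivariant embeddings in odd characteristic, and then invoke Corollary~\ref{cor:strong-f-regular-strata} and Theorem~\ref{thm:global-f-regular-strata}, which is the He--Thomsen reduction argument the paper cites. The only difference is that the paper takes the integral model of $S$ from the Bao--Song integral form of $k[S]$, which is valid at every odd prime, whereas you obtain the model by generic flatness and constructibility after shrinking $\operatorname{Spec}A$; this suffices because only a dense open set of closed points is needed, together with the routine descent of strong and global $F$-regularity from $\bar\kappa$ to the finite residue field $\kappa$.
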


\begin{proof}
In the work \cite{BS24}*{Theorems~4.4 and~4.8}, we construct an integral form of $k[S]$ whose base change
to every algebraically closed field of characteristic different
from $2$ is the coordinate ring of the corresponding symmetric
space.
This provides the integral homogeneous space needed for the
reduction argument used
in \cite{HT07}*{proof of Theorem~9.1}.
We omit the remaining details.
\end{proof}

\begin{corollary}\label{cor:characteristic-zero-singularities}
Let $X$ be an equivariant embedding of $S$. Then $X_u$ is normal,
Cohen--Macaulay, and has rational singularities for every
$u\leq w^\bullet$.
\end{corollary}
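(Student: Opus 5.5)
We deduce the corollary from Theorem~\ref{thm:characteristic-zero-f-regular} together with the standard consequences of $F$-regular type in characteristic zero. By Theorem~\ref{thm:characteristic-zero-f-regular}, each $X_u$ is of strongly $F$-regular type. This means that $X_u$ admits a model $\mathcal X_u$ over a finitely generated $\BZ$-subalgebra $A\subset k$ whose reductions $\mathcal X_u\otimes_A\kappa(\mathfrak m)$, for $\mathfrak m$ in a dense open set of closed points of $\operatorname{Spec}A$, are strongly $F$-regular. Throughout we work locally: normality, Cohen--Macaulayness and rational singularities are all local properties, so we may pass to an affine open cover of $X_u$ and spread out each affine piece.

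\emph{Normality and Cohen--Macaulayness.} Over a perfect field of positive characteristic, a strongly $F$-regular ring is $F$-rational, hence normal and Cohen--Macaulay \cite{HH89}*{Theorem~3.1}. These properties then lift from the general closed fibre to the generic fibre. Normality follows by generic openness of the normal locus over the base. For Cohen--Macaulayness, the Cohen--Macaulay locus of the flat family $\mathcal X_u\to\operatorname{Spec}A$ is open, and fibrewise Cohen--Macaulayness on a dense set of closed fibres forces the generic fibre to be Cohen--Macaulay after shrinking $A$. Base change along $\operatorname{Frac}(A)\to k$ then gives the corresponding properties for $X_u$. Alternatively, and more directly, we cite \cite{Smi00}*{Proposition~5.3 and Corollary~5.3}, which gives these conclusions for varieties of strongly (respectively globally) $F$-regular type.

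\emph{Rational singularities.} Strongly $F$-regular type implies $F$-rational type. By Smith's theorem (``$F$-rational type implies rational singularities'', as recorded in \cite{Smi00}*{Section~5}, building on Hara and Mehta--Srinivas), a variety of $F$-rational type over a field of characteristic zero has rational singularities. Applying this to each affine chart of $X_u$ gives the claim.

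The only point requiring care is that the model used in Theorem~\ref{thm:characteristic-zero-f-regular} must be compatible with affine localization. This is automatic, since strongly $F$-regular type is defined locally and is preserved under passing to open subsets of the model, by locality of strong $F$-regularity \cite{HH89}*{Theorem~3.1(a)}. Beyond this bookkeeping, the corollary is a formal consequence of the cited results; no new geometric input is needed.
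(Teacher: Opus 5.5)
Your proposal is correct and follows the paper's route: strongly $F$-regular type gives $F$-rational type, and Smith's theorem ($F$-rational type implies rational singularities, the paper's reference Smith 1997, Theorem~4.3) then gives the result. Rational singularities already entail normality and Cohen--Macaulayness, so your separate spreading-out argument for those two properties is valid but unnecessary; also, Hara and Mehta--Srinivas proved the converse direction (rational singularities imply $F$-rational type), not the direction you use.
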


\begin{proof}
By Theorem~\ref{thm:characteristic-zero-f-regular}, each $X_u$ is of
strongly $F$-regular type, hence of $F$-rational type by definition; see \cite{Smi97} for the definition of $F$-rationality. 
The assertion
follows from \cite{Smi97}*{Theorem~4.3}.
\end{proof}

\end{document}